\documentclass[english]{amsart}
\usepackage{libertine}
\usepackage[T1]{fontenc}
\usepackage[utf8]{inputenc}
\usepackage{babel}
\usepackage{url}
\usepackage{amsmath}
\usepackage{amsthm}
\usepackage{amssymb}
\usepackage{setspace}
\usepackage[all]{xy}
\usepackage[unicode=true,pdfusetitle,
 bookmarks=true,bookmarksnumbered=false,bookmarksopen=false,
 breaklinks=false,pdfborder={0 0 0},pdfborderstyle={},backref=false,colorlinks=false]
 {hyperref}
\usepackage{breakurl}
\usepackage{bbm}


\makeatletter
\numberwithin{equation}{section}
\numberwithin{figure}{section}
\theoremstyle{plain}
\newtheorem{thm}{\protect\theoremname}[section]
  \theoremstyle{plain}
  \newtheorem{lem}[thm]{\protect\lemmaname}
  \theoremstyle{definition}
  \newtheorem{defn}[thm]{\protect\definitionname}
  \theoremstyle{remark}
  \newtheorem*{rem*}{\protect\remarkname}
  \theoremstyle{remark}
  \newtheorem{rem}[thm]{\protect\remarkname}
  \theoremstyle{definition}
  
  \theoremstyle{plain}
  \newtheorem{cor}[thm]{\protect\corollaryname}
  \theoremstyle{definition}
  \newtheorem*{example*}{\protect\examplename}
  \theoremstyle{plain}
  \newtheorem{prop}[thm]{\protect\propositionname}
  \theoremstyle{remark}
  \newtheorem{claim}[thm]{\protect\claimname}
\theoremstyle{plain}
\newtheorem{que}[thm]{\protect\questionname}
\makeatother

  \providecommand{\claimname}{Claim}
  \providecommand{\corollaryname}{Corollary}
  \providecommand{\definitionname}{Definition}
  \providecommand{\examplename}{Example}
  \providecommand{\lemmaname}{Lemma}
  \providecommand{\propositionname}{Proposition}
  \providecommand{\remarkname}{Remark}
\providecommand{\theoremname}{Theorem}
  \providecommand{\questionname}{Question}
  
\newcommand{\con}{\subseteq}

\newcommand{\emp}{\varnothing}
\newcommand{\smin}{\!\smallsetminus\!}

\newcommand{\p}{\mathbb{P}}
\newcommand{\q}{\mathbb{Q}}
\newcommand{\m}{\mathcal{M}}
\newcommand{\fii}{\varphi}
\newcommand{\nec}{\square}
\newcommand{\pos}{\lozenge}

\newcommand{\til}{,\!...,}

\newcommand{\s}{\mathcal{S}}
\newcommand{\frakt}{\mathfrak{t}}
\newcommand{\one}{\mathbbm{1}}
\newcommand{\init}{\trianglelefteq}
\newcommand{\tini}{\trianglerighteq}

\title[Modal logic of $\sigma$-centered forcing]{The modal logic of $\sigma$-centered forcing \\ and related forcing classes}
\author{Ur Ya'ar}

\thanks{This work is an adaptation of the author's MSc thesis \cite{BAT} (under former name), written at the Hebrew University, Jerusalem, under supervision of Prof. Menachem Magidor. I would like to thank Prof. Magidor for the many ideas, discussions and advice, which made this work possible.\\
I would also like to thank the anonymous referee for their helpful comments and feedback, and for asking the questions leading to the discussion in section \ref{overL} and to theorem \ref{thm:allP} }
\subjclass[2010]{Primary 03E40; Secondary 03B45}
\keywords{Forcing, modal logic, modal logic of forcing, sigma-centered, S4.2}

\begin{document}

\begin{abstract}
	We consider the modality ``$\fii$ is true in every $\sigma$-centered forcing extension'', denoted $\nec\fii$, and its dual ``$\fii$ is true in some $\sigma$-centered forcing extension'', denoted $\pos\fii$ (where $\fii$ is a statement in set theory), which give rise to the notion of a \emph{principle of $\sigma$-centered forcing}. We prove that if ZFC is consistent, then the modal logic of $\sigma$-centered forcing, i.e.\ the ZFC-provable principles of $\sigma$-centered forcing, is exactly $\mathsf{S4.2}$. We also generalize this result to other related classes of forcing.
\end{abstract}
\maketitle

\section{Introduction and preliminaries}

 In this work we continue the investigation of the \emph{Modal Logic of Forcing}, initiated by Joel Hamkins and Benedikt L\"{o}we in \cite{MLF}, where they consider the modal logic arising from considering a statement as necessary (respectively possible) if it is true in any (res. some) forcing extension of the world. Here we restrict the modality only to extensions obtained by $\sigma$-centered focing notions, and prove that the modal logic arising from this interpretation is $\mathsf{S4.2}$ (see below). We then show that our techniques can be generalized to other related classes of forcing notions.

We begin by setting some preliminaries -- first we cite common definitions
and theorems of forcing and of modal logic; and then present the main
tools developed in \cite{MLF,StC} for the research of the modal logic
of forcing; we add one new notion to this set of tools, the notion
of an $n$-switch, and show it's utility; and prove a general theorem (thm. \ref{main-labeling}) which provides the framework for the main theorem (thm. \ref{thm:main}).
 In section \ref{sec:s-centered} we present the class of $\sigma$-centered forcing and some of its properties which
give us the easy part of the theorem -- that the modal logic of $\sigma$-centered
forcing contains $\mathsf{S4.2}$, and present the technique of coding
subsets using $\sigma$-centered forcing. The hard part of the main
theorem will be proved in section \ref{sec:Labeling-frames}, where
we begin by defining a specific model of $\mathrm{ZFC}$, and then
present two forcing constructions that would allow us to establish
that the modal logic of $\sigma$-centered forcing is contained in
$\mathsf{S4.2}$. We conclude with the above-mentioned generalizations
and some open questions.


We begin by presenting some notations and background that will be used
in this work. Our forcing notation is standard, and will usually follow Kunen's \cite[chapter VII]{Kunen}. 

We work with propositional modal logic as presented in \cite{ModalLogic}, in which we add to standard propositional logic two unary operators -- $\nec,\pos$, where $\nec\fii$ is interpreted as ``necessarily $\fii$'' and $\pos\fii$ as ``possibly $\fii$''.

The modal axioms which will be used are: 

\[
\begin{array}{cc}
\mathrm{K} & \nec\left(\fii\to\psi\right)\to\left(\nec\fii\to\nec\psi\right)\\
\mathrm{Dual} & \pos\fii\leftrightarrow\neg\nec\neg\fii\\
\mathrm{T} & \nec\fii\to\fii\\
\mathrm{4} & \nec\fii\to\nec\nec\fii\\
\mathrm{.2} & \pos\nec\fii\to\nec\pos\fii\\
\mathrm{.3}  & \quad\left(\pos\fii\land\pos\psi\right)\to\pos\left[\left(\pos\fii\land\psi\right)\lor \left(\fii\land\pos\psi\right)\right]\\
\mathrm{5} & \pos\nec\fii\to\fii
\end{array}
\]
and the modal theories discussed are: $\mathsf{S4}$, axiomatized by $\mathrm{K, Dual, T}$ and $\mathrm{4}$, $\mathsf{S4.2}$, axiomatized by adding axiom $\mathrm{.2}$, $\mathsf{S4.3}$ by adding axiom $\mathrm{.3}$ and $\mathsf{S5}$ by adding axiom $\mathrm{5}$.

We assume the reader is familiar with \emph{Kripke Semantics} for modal logic, where a \emph{Kripke model} is a triplet $\m =\left<W,R,V\right>$ such that $W$ is a non-empty set (the set of worlds), $R$ is a binary relation on $W$ (the accessibility relation) and $V$ a function from the propositional variables to subsets of $W$ (the valuation);
$\mathcal{F}=\left< W,R \right>$ is called the \emph{frame} on which $\m$ is based; and the satisfaction relation    $\mathcal{M},w\vDash\fii$ (for $w\in W$) is defined in the usual inductive way, using  
\begin{equation*}
\mathcal{M},w\vDash\nec\fii ~\text{iff for every}~ u\in W \text{such that}~ wRu,~ \mathcal{M},u\vDash\fii.
\end{equation*}
We say that $\fii$ is \emph{valid} \emph{in} $\mathcal{M}$ ($\mathcal{M}\vDash\fii$) if $\mathcal{M},w\vDash\fii$ for every $w\in W$, and that that $\fii$ is valid on a frame $\mathcal{F}$ ($\mathcal{F}\vDash\fii$) if $\fii$ is valid in every model based on $\mathcal{F}$. A class of frames $\mathcal{C}$ \emph{characterizes} a modal theory $\Lambda$ if a formula is in $\Lambda$ iff it is valid on every frame in $\mathcal{C}$.

We will use the following class of frames to characterize $\mathsf{S4.2}$:
\begin{defn} \label{def:pBA}
Let $\left\langle F,\leq\right\rangle $ such that $\leq$ is a reflexive
and transitive binary relation on $F$. $\left\langle F,\leq\right\rangle $
is called a \emph{pre-Boolean-algebra }(a pBA) if $\left\langle F/\!\equiv,\leq\right\rangle $ is a Boolean-algebra (a BA), where $\equiv$ is the natural equivalence relation on $F$ defined by $x\equiv y$ iff $x\leq y\leq x$, and $\leq$ denotes also the induced order relation.
\end{defn}
A pBA can be thought of as a BA where every element is replaced by
a cluster of equivalent elements. We will use the following:

\begin{thm}[Thm. 11 in \cite{MLF}]
\label{thm:S4.2 comp}$\mathsf{S4.2}$ is characterized by the class
of all finite pre-Boolean-algebras.
\end{thm}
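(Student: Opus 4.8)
The plan is to prove that $\mathsf{S4.2}$ is characterized by the class of finite pre-Boolean-algebras by reducing it to the known characterization (cited just before as the class of finite reflexive, transitive, directed frames). Soundness is the easy direction: a finite pBA is in particular reflexive, transitive and directed, so by the soundness half of the cited theorem every formula of $\mathsf{S4.2}$ is valid on every finite pBA. Hence $\mathsf{S4.2}$ is sound with respect to finite pBAs, and it remains to establish completeness.

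For completeness, suppose $\fii\notin\mathsf{S4.2}$. By the cited theorem there is a finite reflexive, transitive, directed frame $\mathcal{F}=\langle W,R\rangle$, a valuation $V$, and a world $w_0$ with $\mathcal{M},w_0\vDash\neg\fii$. The task is to manufacture from $\mathcal{F}$ a finite pBA $\mathcal{F}'$ together with a valuation refuting $\fii$. First I would pass to the quotient: since $R$ is a preorder, set $x\equiv y$ iff $xRy\mathrel{\&}yRx$, obtaining a finite poset $\langle W/\!\equiv,R\rangle$ which is still directed, hence has a top cluster. The idea is then to embed this poset into a finite Boolean algebra in an $R$-preserving, $R$-reflecting way — concretely, enumerate the $\equiv$-classes and use the standard trick of mapping each class to a distinct subset of a finite set (e.g.\ assign to the $i$-th ``coatom-like'' direction a coordinate) so that the order is reproduced as inclusion — and to add whatever extra elements (new clusters) are needed so that the resulting partial order literally is a finite Boolean algebra. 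Pulling back the atoms/elements gives a finite pBA $F$ by replacing each element of the BA with a singleton cluster (or a cluster of the appropriate size to mirror the original clusters of $\mathcal{F}$). Then I would define the valuation on $F$ so that the worlds coming from the old frame keep their old truth values of propositional variables, and show by a bisimulation-style argument (or a direct induction on subformulas using that the inclusion $W/\!\equiv\hookrightarrow$ BA preserves and reflects the relevant accessibility and that the ``new'' worlds don't interfere because they can be made to lie below the old ones or be handled uniformly) that $\fii$ is still refuted at the image of $w_0$.

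Actually, a cleaner route avoiding ad hoc surgery: take $\mathcal{F}$ finite reflexive transitive directed, form the poset $P=W/\!\equiv$, and consider the Boolean algebra $B$ of all subsets of the set $A$ of $\equiv$-classes, ordered by reverse inclusion; map each class $[x]$ to the set $\{[y]: xRy\}$ (its upward cone). Directedness and transitivity make this map order-preserving and order-reflecting into $B$ (with the cone of the top cluster going to a small set, cones going down to larger sets), and $B$ with each point blown up into a one-element cluster is a finite pBA. A $p$-morphism (bounded morphism) from the sub-pBA onto $\mathcal{F}$ — or rather the observation that $\mathcal{F}$ is a generated subframe / $p$-morphic image of a finite pBA — then transfers refutation of $\fii$. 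The main obstacle, and the step I expect to spend the most care on, is exactly this: getting a $p$-morphism or generated-subframe relation between a finite pBA and an arbitrary finite reflexive/transitive/directed frame, so that validity transfers in the right direction — one must check the ``back'' condition of a bounded morphism (if the image can access a world, some preimage can access a preimage of it), which is where directedness and the specific embedding into the Boolean algebra have to be used precisely. Once that structural lemma is in hand, both soundness and completeness of $\mathsf{S4.2}$ with respect to finite pBAs follow from the cited Theorem~11 of \cite{MLF}.
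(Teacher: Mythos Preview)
The paper does not prove this theorem; it is simply quoted as Theorem~11 of \cite{MLF} and used as a black box throughout. So there is no proof in the present paper to compare your attempt against.

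On the merits of your proposal: the overall architecture is the right one --- soundness is immediate since every pBA is reflexive, transitive and directed, and for completeness one wants to realize an arbitrary finite directed preorder as a p-morphic image of (or generated subframe inside) some finite pBA. But you have not actually carried out that step, and your specific construction does not deliver it. The upward-cone map $[x]\mapsto\{[y]:xRy\}$ into $\mathcal{P}(A)$ under reverse inclusion is indeed an order-embedding, but its image is \emph{not} a generated subframe of the BA: from the image of $[x]$ one can access (under $\supseteq$) every subset $S\subseteq\{[y]:xRy\}$, and most such $S$ --- already $S=\varnothing$, the top of the BA --- are not upward cones of anything. Nor have you produced a p-morphism from the full Boolean algebra onto the original frame: your map is specified only on the embedded copy, so the back condition cannot even be formulated at the ``new'' points. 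You correctly flag this as the main obstacle, but flagging it is not the same as overcoming it; supplying an explicit surjective bounded morphism from a power-set algebra onto the given directed preorder (and verifying the back condition) is precisely the substantive content of the argument in \cite{MLF}, and without it the completeness direction remains open.
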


\subsection{The modal logic of forcing}
We now review the framework of the modal logic of forcing, based on \cite{MLF} and \cite{StC}. The reader who is familiar with these works may wish to skip to definition \ref{def:control-statements} where we define the new notion of an $n$-switch.
 
In the context of set-theory, the possible world semantics suggest a connection between modal logic and forcing, as we can imagine all generic extensions of the universe
(or of a specific model of $\mathrm{ZFC}$) as an enormous Kripke
model (called ``\emph{the generic multiverse}''). This leads naturally to the forcing interpretation of modal logic, in which we say that a sentence of set theory $\fii$ is necessary ($\nec\fii$) if it is true in all forcing extensions, and possible ($\pos\fii$) if it is true in some
forcing extension. 
Given some definable class of forcing notions $\Gamma$, we can also restrict to posets
belonging to that class, to get the operators $\nec_{\Gamma},\pos_{\Gamma}$.
The following definitions, based on \cite{MLF} and \cite{StC}, allow
us to formally ask the question -- what statements are valid under
this interpretation? 
\begin{defn}

\begin{enumerate}
\item Given a formula $\fii=\fii\left(q_{0}\til q_{n}\right)$ in the language
of modal logic, where $q_{0}\til q_{n}$ are the only propositional
variable appearing in $\fii$, and some set-theoretic sentences $\psi_{0}\til\psi_{n}$,
the \emph{substitution instance} $\fii\left(\psi_{0}\til\psi_{n}\right)$
is the set-theoretic statement obtained recursively by replacing $q_{i}$
with $\psi_{i}$ and interpreting the modal operators according to
the forcing interpretation (or the $\Gamma$-forcing interpretation). 
\item Let $\Gamma$ be a class of forcing notions. The $\mathrm{ZFC}$\emph{-provable
principles of~ $\Gamma$-forcing }are all the modal formulas $\fii$
such that $\mathrm{ZFC}\vdash\fii\left(\psi_{0}\til\psi_{n}\right)$
for every substitution $q_{i}\mapsto\psi_{i}$ under the $\Gamma$-forcing
interpretation\emph{. }This will also be called \emph{the modal logic
of~ $\Gamma$-forcing}, denoted $\mathsf{MLF}\left(\Gamma\right)$.
If we discuss the class of all forcing notions we omit mention of
$\Gamma$.
\end{enumerate}
\end{defn}
\begin{thm}[Hamkins and L\"{o}we, \cite{MLF}]
 If ~$\mathrm{ZFC}$ is consistent then the $\mathrm{ZFC}$-provable
principles of forcing are exactly $\mathsf{S4.2}$.
\end{thm}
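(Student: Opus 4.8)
The plan is to prove the two inclusions $\mathsf{S4.2}\con\mathsf{MLF}$ (soundness of $\mathsf{S4.2}$ under the forcing interpretation) and $\mathsf{MLF}\con\mathsf{S4.2}$ (completeness) separately; only the hypothesis ``$\mathrm{ZFC}$ consistent'' will be used, and only for the second. For soundness I would check that each axiom scheme of $\mathsf{S4.2}$ becomes a $\mathrm{ZFC}$-theorem under every substitution $q_i\mapsto\psi_i$, and that the rules preserve this. $\mathrm K$ and $\mathrm{Dual}$ follow from the definability of $\Vdash$ and pure logic. $\mathrm T$ holds because trivial forcing exhibits $V$ as a forcing extension of itself. $\mathrm 4$ holds because a two-step iteration of forcings is again a forcing, so a forcing extension of a forcing extension is one. $\mathrm{.2}$ holds because any two forcing extensions $V[G],V[H]$ admit a common further extension (via the product $\p\times\q$), so once $\nec\fii$ is achieved in some extension $\fii$ stays forceable everywhere — precisely directedness of the generic multiverse. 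Necessitation is sound since a $\mathrm{ZFC}$-theorem holds in every model, hence in every forcing extension; modus ponens and uniform substitution are trivially sound.

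For completeness I would argue the contrapositive: given $\fii\notin\mathsf{S4.2}$, produce a substitution $q_i\mapsto\psi_i$ and a model $W\vDash\mathrm{ZFC}$ with $W\vDash\neg\fii\left(\psi_0\til\psi_n\right)$, whence $\mathrm{ZFC}\nvdash\fii\left(\psi_0\til\psi_n\right)$. By Theorem~\ref{thm:S4.2 comp} there is a finite pBA $\mathcal F=\langle F,\leq\rangle$, a valuation $V$, and a world $w_0$ with $\langle\mathcal F,V\rangle,w_0\vDash\neg\fii$; passing to the subframe generated by $w_0$ I may take $w_0$ to be the bottom node. The engine is the \emph{buttons and switches} method: a \emph{button} is a sentence $b$ with $\neg b\wedge\pos\nec b$ (false now, irrevocably true once forced), a \emph{switch} is a sentence $s$ with $\nec\pos s\wedge\nec\pos\neg s$ (freely toggled), and one needs \emph{independent} finite families of each — any subset of the buttons pushable, any pattern of the switches realizable, with no interference. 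The arithmetic input, provable in $\mathrm{ZFC}$, is that arbitrarily large such independent families exist; standard choices are buttons like ``$\aleph_n^{L}$ is not a cardinal'' (pushed by a suitable collapsing forcing) and switches recording independent values of the continuum function at distinct regular cardinals, with independence verified by a product/iteration analysis.

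To finish I would simulate $\mathcal F$ inside the forcing multiverse of a suitable model. As a Boolean algebra $F/\!\equiv$ is isomorphic to $\mathcal P(k)$ for some $k$, with a finite cluster over each node; I would use $k$ independent buttons so that the pushed-button pattern names the current node (with $w_0$ the all-unpushed pattern), and enough independent switches both to move within the cluster over the current node and to encode $V$. Take $W\vDash\mathrm{ZFC}$ in which all these buttons are unpushed and the switches are free — e.g.\ $L$ as computed inside any model of $\mathrm{ZFC}$, which exists by the consistency hypothesis. Then the map sending a forcing extension $W[G]$ to the node-and-cluster-coordinate read off its button/switch configuration is a surjective p-morphism (bounded morphism) from the generated forcing multiverse of $W$ onto $\mathcal F$ with $W\mapsto w_0$: ``forth'' holds because buttons only ever get pushed, so configurations move up $\mathcal F$, and ``back'' holds because from any configuration one can force to any higher node with any cluster-coordinate. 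Defining each $\psi_i$ as the Boolean combination of button- and switch-sentences matching $V(q_i)$ along this map makes the p-morphism valuation-preserving, so the standard fact that modal validity is reflected along p-morphisms converts $\langle\mathcal F,V\rangle,w_0\vDash\neg\fii$ into $W\vDash\neg\fii\left(\psi_0\til\psi_n\right)$, completing the proof.

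I expect the genuine difficulty to lie entirely in the completeness half, in two places. First, producing the independent families of buttons and switches \emph{provably in $\mathrm{ZFC}$} and verifying their independence — a forcing-bookkeeping argument, and the exact point where one must ensure that pushing a button can neither un-push nor spuriously trigger another; this one-way, monotone behaviour of buttons is what pins the answer at $\mathsf{S4.2}$, with its directed top, rather than $\mathsf{S5}$. Second, the ``labeling'' verification that the button/switch configuration really does induce a p-morphism onto $\mathcal F$, i.e.\ that from every configuration precisely the right further configurations are reachable and the chosen valuation is respected. The soundness checks, the appeal to Theorem~\ref{thm:S4.2 comp}, and the p-morphism reflection lemma I would treat as routine.
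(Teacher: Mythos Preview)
Your proposal is correct and follows essentially the approach of Hamkins and L\"{o}we that the paper summarizes (the paper itself cites this theorem without proof, but lays out the machinery in \S2.3). The only cosmetic difference is terminology: you phrase the upper-bound argument via a p-morphism (bounded morphism) from the generic multiverse onto the finite pBA, whereas the paper packages the same construction as a \emph{labeling} (Definition~\ref{def:labeling}) together with the Labeling Lemma~\ref{lem:labeling}; these are equivalent formulations of the same transfer principle, and your concrete button/switch choices (collapsing $\aleph_n^L$, continuum-function patterns, ground model $L$) are the standard ones from \cite{MLF}.
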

We will now present the main tools which were developed to prove the
theorem above, and which can be used to prove similar theorems. To
prove such a theorem, we need to establish lower and upper bounds,
i.e. find a modal theory $\Lambda$ such that $\mathsf{MLF}\left(\Gamma\right)\supseteq\Lambda$
and $\mathsf{MLF}\left(\Gamma\right)\con\Lambda$ respectively. Each
type of bound require a different set of tools, which will be presented
below.

\subsubsection{Lower bounds}

A simple observation is that the $\mathrm{ZFC}$-provable principles
of $\Gamma$-forcing are closed under the usual deduction rules for modal logic, 
so if a modal theory is given by some axioms, to show it is contained in $\mathsf{MLF}\left(\Gamma\right)$
it is enough to check that the axioms are valid principles of $\Gamma$-forcing.
So for example, axioms $\mathrm{K}$ and $\mathrm{Dual}$ are easily
seen to be valid under the $\Gamma$-forcing interpretation for every
class $\Gamma$. The validity of other axioms depends on specific
properties of $\Gamma$:
\begin{defn}
A definable class of forcing notions $\Gamma$ is said to be \emph{reflexive}
if it contains the trivial forcing; \emph{transitive} if it is closed
under finite iterations, i.e. if~ $\p\in\Gamma$ and $\dot{\q}$
is a $\p$-name for a poset such that $\Vdash_{\p}\dot{\q}\in\Gamma$,
then $\p*\dot{\q}\in\Gamma$; \emph{persistent} if $\p,\q\in\Gamma$ implies $\q\in\Gamma^{V^\p}$; 
and \emph{directed} if $\p,\q \in\Gamma$ implies that there is some $\mathbb{R}\in\Gamma$ such that $\mathbb{R}$ is forcing equivalent to $\p*\dot{\mathbb{S}}$ and to $\q*\dot{\mathbb{T}}$, where $\dot{\mathbb{S}}\in \Gamma^{V^{\p}} $ and $\dot{\mathbb{T}}\in \Gamma^{V^{\q}}$.

Note that if a $\Gamma$ is transitive and persistent, we can show it is directed by taking $\mathbb{R}=\p\times \q$ for any $\p,\q\in\Gamma$.
\end{defn}

\begin{thm}[Thm. 7 in \cite{StC}] \label{thm:lower bounds}
	Axiom $\mathrm{T}$ is valid in every reflexive forcing class, axiom $\mathrm{4}$ in every transitive forcing class and axiom $\mathrm{.2}$ in every directed forcing class. Thus, if~ $\Gamma$ is reflexive, transitive and directed then $\mathsf{MLF}(\Gamma)\supseteq\mathsf{S4.2}$.
\end{thm}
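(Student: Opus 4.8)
The plan is to verify that, under the $\Gamma$-forcing interpretation, each of the three remaining $\mathsf{S4.2}$-axioms $\mathrm{T}$, $\mathrm{4}$, $\mathrm{.2}$ becomes a ZFC-provable scheme, using reflexivity, transitivity and directedness of $\Gamma$ respectively; this is the forcing-side shadow of the fact that $\mathsf{S4.2}$ is sound over reflexive, transitive, directed frames (the easy half of Theorem~\ref{thm:S4.2 comp}), the point being that ``$W$ is a $\Gamma$-forcing extension of $V$'' behaves like a reflexive, transitive, directed accessibility relation on the generic multiverse. Once the three schemes are established, the ``thus'' clause is immediate: $\mathrm{K}$ and $\mathrm{Dual}$ are ZFC-provable schemes for every $\Gamma$ (as already noted), and $\mathsf{MLF}(\Gamma)$ is a modal theory, i.e.\ closed under modus ponens, necessitation and uniform substitution (the first two were noted above; uniform substitution holds because a substitution instance of $\sigma(\fii)$ is again a substitution instance of $\fii$). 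Hence $\mathsf{MLF}(\Gamma)$ contains the deductive closure of the five axioms, which is exactly $\mathsf{S4.2}$.

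For axiom $\mathrm{T}$, a substitution instance of $\nec\fii\to\fii$ is the set-theoretic statement $\nec_{\Gamma}\psi\to\psi$, where $\psi$ is a set-theoretic sentence and $\nec_{\Gamma}\psi$ abbreviates the definable formula ``for every $\p\in\Gamma$ and every $p\in\p$, $p\Vdash_{\p}\psi$''. Since $\Gamma$ is reflexive it contains the trivial poset $\one$, and $\one\Vdash\psi$ holds iff $\psi$ holds; so ZFC proves $\nec_{\Gamma}\psi\to\psi$. For axiom $\mathrm{4}$, a substitution instance of $\nec\fii\to\nec\nec\fii$ is $\nec_{\Gamma}\psi\to\nec_{\Gamma}\nec_{\Gamma}\psi$. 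Reasoning in ZFC: assume $\nec_{\Gamma}\psi$, fix an arbitrary $\p\in\Gamma$ and pass to $V^{\p}$; by transitivity any $\Gamma$-forcing extension of $V^{\p}$ has the form $V^{\p*\dot{\q}}$ with $\p*\dot{\q}\in\Gamma$, and it satisfies $\psi$ by the instance of the assumption for $\p*\dot{\q}$; thus $V^{\p}\models\nec_{\Gamma}\psi$, and as $\p$ was arbitrary, $\nec_{\Gamma}\nec_{\Gamma}\psi$ holds.

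For axiom $\mathrm{.2}$, a substitution instance of $\pos\nec\fii\to\nec\pos\fii$ is $\pos_{\Gamma}\nec_{\Gamma}\psi\to\nec_{\Gamma}\pos_{\Gamma}\psi$. Reasoning in ZFC: suppose some $\p\in\Gamma$ satisfies $V^{\p}\models\nec_{\Gamma}\psi$, and fix an arbitrary $\q\in\Gamma$; I must produce a $\Gamma$-forcing extension of $V^{\q}$ modelling $\psi$. By directedness choose $\mathbb{R}\in\Gamma$ forcing-equivalent to $\p*\dot{\mathbb{S}}$ and to $\q*\dot{\mathbb{T}}$, where $\dot{\mathbb{S}}\in\Gamma^{V^{\p}}$ and $\dot{\mathbb{T}}\in\Gamma^{V^{\q}}$. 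Then $V^{\mathbb{R}}$ is simultaneously a $\Gamma$-forcing extension of $V^{\q}$ (via $\dot{\mathbb{T}}$) and of $V^{\p}$ (via $\dot{\mathbb{S}}$); the latter, together with $V^{\p}\models\nec_{\Gamma}\psi$, gives $V^{\mathbb{R}}\models\psi$. Hence $V^{\q}\models\pos_{\Gamma}\psi$, and since $\q$ was arbitrary, $\nec_{\Gamma}\pos_{\Gamma}\psi$.

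I expect the only genuine care to be needed in the forcing bookkeeping underlying the arguments for $\mathrm{4}$ and $\mathrm{.2}$: making precise, inside ZFC, the standard identifications of iterated generic extensions (two-step iterations $\p*\dot{\q}$, and passage along a forcing equivalence) that license moving between ``$\Gamma$-forcing extension of $V$'' and ``$\Gamma$-forcing extension of $V^{\p}$'', together with the remark that definability of the forcing relation is what makes $\nec_{\Gamma}$ and $\pos_{\Gamma}$ legitimate set-theoretic formulas, so that the three schemes above really are ZFC statements. None of this is deep, but it is the place where the proof must be written out carefully.
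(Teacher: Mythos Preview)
Your proposal is correct and follows the standard argument. Note, however, that the paper does not actually supply its own proof of this theorem: it is stated as Theorem~7 of \cite{StC} and simply cited, so there is nothing in the paper to compare your write-up against. Your verification of $\mathrm{T}$, $\mathrm{4}$ and $\mathrm{.2}$ from reflexivity, transitivity and directedness respectively is exactly the intended argument, and your closing remark about the bookkeeping (conditions versus posets, definability of the forcing relation, identification of iterated extensions) correctly identifies where any care is needed.
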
	

\subsubsection{\label{subsec:Upper-bounds}Upper bounds}
To establish that
$\Lambda$ is an upper bound for $\mathsf{MLF}\left(\Gamma\right)$,
we need to show that every formula not in $\Lambda$ is also not in
$\mathsf{MLF}\left(\Gamma\right)$. To do so, we would need to find
a model of $\mathrm{ZFC}$ and some substitution instance of $\fii$
that fails in this model. In the case that $\Lambda$ is 
characterized by some class of frames $\mathcal{C}$, $\fii\notin\Lambda$ means
that there is some Kripke model based on a frame in $\mathcal{C}$
where $\fii$ fails. So our goal would be to find a suitable model of set theory
$W$ such that the $\Gamma$-generic multiverse generated by $W$
(i.e. all $\Gamma$-forcing extensions of $W$) ``looks like'' the
model where $\fii$ fails. The main tool for that is called a labeling:
\begin{defn}
\label{def:labeling}A $\Gamma$\emph{-labeling of a frame $\left\langle F,R\right\rangle $
for a model of set theory $W$} is an assignment to each $w\in F$
a set-theoretic statement $\Phi_{w}$ such that:

\begin{enumerate}
\item The statements form a mutually exclusive partition of truth in the
$\Gamma$-generic multiverse over $W$, i.e. every $\Gamma$-generic
extension of $W$ satisfies exactly one $\Phi_{w}$. 
\item The statements correspond to the relation, i.e. if $W\left[G\right]$
is a $\Gamma$-forcing extension of $W$ that satisfies $\Phi_{w}$,
then $W\left[G\right]\vDash\pos\Phi_{u}$ iff $wRu$.
\item $W\vDash\Phi_{w_{0}}$ where $w_{0}$ is a given initial element of
$F$.
\end{enumerate}
\end{defn}

\begin{lem}[The labeling lemma -- Lem. 9 in \cite{StC}]
\label{lem:labeling}Suppose $w\mapsto\Phi_{w}$ is a $\Gamma$-labeling
of a finite frame $\left\langle F,R\right\rangle $ for a model of
set theory $W$ with $w_{0}$ an initial world of $F$, and $\m$
a Kripke model based on $F$. Then there is an assignment of the propositional
variables $p\mapsto\psi_{p}$ such that for every modal formula $\fii\left(p_{0}\til p_{n}\right)$,
\[
\m,w_{0}\vDash\fii\left(p_{0}\til p_{n}\right)  \iff  W\vDash\fii\left(\psi_{p_{0}}\til\psi_{p_{n}}\right).
\]
\end{lem}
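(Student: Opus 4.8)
The plan is to prove the lemma by a Tarski-style induction on the complexity of $\fii$, but strengthened so that the inductive statement ranges over \emph{all} worlds of $F$ and \emph{all} universes of the $\Gamma$-generic multiverse over $W$, rather than just over $w_0$ and $W$ itself. First I would fix the translation: writing $V$ for the valuation of $\m$, so that $V(p)\con F$ is the set of worlds of $\m$ satisfying $p$, set $\psi_p:=\bigvee_{w\in V(p)}\Phi_w$, a finite disjunction since $F$ is finite. For a modal formula $\fii$ write $\fii^{*}$ for the substitution instance $\fii(\psi_{p_0}\til\psi_{p_n})$ under the $\Gamma$-forcing interpretation. The claim to be proved by induction is: for every $w\in F$ and every $\Gamma$-generic extension $W[G]$ of $W$ with $W[G]\vDash\Phi_w$, one has $\m,w\vDash\fii$ if and only if $W[G]\vDash\fii^{*}$. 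The lemma then follows by taking $w=w_0$ and the extension $W$ itself, which satisfies $\Phi_{w_0}$ by clause (3) of the labeling and lies in its own $\Gamma$-generic multiverse.

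For the atomic case $\fii=p_i$: by clause (1) of the labeling the statements $\Phi_u$ form a mutually exclusive partition of truth over the multiverse, so $W[G]\vDash\bigvee_{u\in V(p_i)}\Phi_u$ holds precisely when the unique $u$ with $W[G]\vDash\Phi_u$ --- which is $w$ --- belongs to $V(p_i)$; and $w\in V(p_i)$ is exactly $\m,w\vDash p_i$. The Boolean connectives are immediate from the induction hypothesis, since the substitution $q_i\mapsto\psi_{q_i}$ commutes with $\neg$, $\land$, $\lor$, etc., and satisfaction in $W[G]$ respects the classical connectives.

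The modal case is where the strengthened hypothesis pays off; I would treat $\fii=\pos\chi$ (the case $\nec\chi$ being entirely symmetric, or reducible to it via $\nec\chi\leftrightarrow\neg\pos\neg\chi$, which holds both in $\m$ and under the $\Gamma$-forcing interpretation). If $\m,w\vDash\pos\chi$, choose $u$ with $wRu$ and $\m,u\vDash\chi$. Since $W[G]\vDash\Phi_w$, clause (2) of the labeling together with $wRu$ gives $W[G]\vDash\pos_{\Gamma}\Phi_u$, so there is a $\Gamma$-extension $W[G][H]\vDash\Phi_u$; this $W[G][H]$ is again in the $\Gamma$-generic multiverse over $W$, so the induction hypothesis applies to $\chi$, $u$ and $W[G][H]$, yielding $W[G][H]\vDash\chi^{*}$, and hence $W[G]\vDash\pos_{\Gamma}\chi^{*}=(\pos\chi)^{*}$. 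Conversely, if $W[G]\vDash(\pos\chi)^{*}$, fix a $\Gamma$-extension $W[G][H]\vDash\chi^{*}$; by clause (1) there is a unique $u$ with $W[G][H]\vDash\Phi_u$, and since $W[G][H]$ witnesses $W[G]\vDash\pos_{\Gamma}\Phi_u$, clause (2) gives $wRu$; the induction hypothesis then gives $\m,u\vDash\chi$, so $\m,w\vDash\pos\chi$.

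The argument is essentially routine once the setup is right, so there is no deep obstacle; the two points requiring genuine care are (i) formulating the induction hypothesis over all worlds and all universes of the multiverse simultaneously --- without this, the quantifier ``$\exists u\,(wRu\wedge\dots)$'' appearing in the modal step cannot be matched with a witness in the multiverse --- and (ii) checking in the modal step that each intermediate universe $W[G][H]$ is still a member of the $\Gamma$-generic multiverse over $W$, so that clauses (1) and (2) of the labeling, which are stated relative to $W$, remain applicable; this is exactly the place where one uses that the multiverse is closed under taking further $\Gamma$-extensions and contains $W$.
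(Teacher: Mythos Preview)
The paper does not prove this lemma; it merely cites it as Lemma~9 of \cite{StC}. Your argument is correct and is precisely the standard proof one finds in that reference: set $\psi_p=\bigvee_{w\in V(p)}\Phi_w$ and prove by induction on $\fii$ the strengthened equivalence ranging over all $w\in F$ and all $\Gamma$-extensions $W[G]\vDash\Phi_w$.

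Your point~(ii) is well taken and worth making explicit: the modal step needs $W[G][H]$ to again be a $\Gamma$-extension of $W$, which uses transitivity of $\Gamma$, and the specialization to $w_0$ and $W$ at the end needs $W$ to count as a $\Gamma$-extension of itself, which uses reflexivity. Neither hypothesis is stated in the lemma as quoted, but both are standing assumptions in \cite{StC} and throughout this paper, so there is no real gap.
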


\begin{cor}
\label{cor:S4.2 upper bound} If every finite pre-Boolean-algebra has
a $\Gamma$-labeling over some model of $\mathrm{ZFC}$, then $\mathsf{MLF}\left(\Gamma\right)\con\mathsf{S4.2}$
\end{cor}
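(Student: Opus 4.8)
The plan is to establish the inclusion $\mathsf{MLF}(\Gamma)\con\mathsf{S4.2}$ contrapositively: assuming that every finite pre-Boolean-algebra has a $\Gamma$-labeling over some model of $\mathrm{ZFC}$, I will show that an arbitrary $\fii\notin\mathsf{S4.2}$ is not a $\mathrm{ZFC}$-provable principle of $\Gamma$-forcing, by producing a model of $\mathrm{ZFC}$ together with a substitution instance of $\fii$ that fails in it. The entry point is the completeness half of Theorem~\ref{thm:S4.2 comp}: since $\fii\notin\mathsf{S4.2}$, there are a finite pBA $\mathcal{F}=\langle F,\leq\rangle$, a Kripke model $\m$ based on $\mathcal{F}$, and a world $w\in F$ with $\m,w\vDash\neg\fii$.

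The only point needing a short argument is that we may take $w$ to be an \emph{initial} world of the frame, as the labeling lemma requires. For this I would replace $\mathcal{F}$ by the subframe generated by $w$, which in a pBA is simply the upward cone $\{u\in F:w\leq u\}$; passing to the quotient, this corresponds to the interval from $[w]$ to the top of the underlying Boolean algebra, which is again a Boolean algebra (under relative complementation), so the cone is again a finite pBA --- now one in whose bottom cluster $w$ lies, i.e.\ $w$ is an initial world. Restricting $\m$ to this subframe and using the standard fact that satisfaction of modal formulas is preserved under generated subframes, we still have $\m,w\vDash\neg\fii$. This reduction is genuinely necessary: $\neg\fii$ need not be upward-persistent, so one cannot simply evaluate at the bottom of the original algebra. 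If the bottom cluster of the cone is not a singleton, one may first replace the valuation of $\m$ on that cluster by the valuation at $w$, which changes no truth value at $w$ or at any world outside the cluster; this makes the choice of designated initial world in what follows immaterial.

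Now I would apply the hypothesis to the finite pBA just obtained: it has a $\Gamma$-labeling $u\mapsto\Phi_u$ over some model $W\vDash\mathrm{ZFC}$, with $w$ as the designated initial world, so that $W\vDash\Phi_w$. Feeding this labeling and the Kripke model $\m$ into Lemma~\ref{lem:labeling} produces an assignment $p\mapsto\psi_p$ of set-theoretic sentences to the propositional variables such that $\m,w\vDash\chi(p_0\til p_n)$ iff $W\vDash\chi(\psi_{p_0}\til\psi_{p_n})$ for every modal formula $\chi$. Taking $\chi=\fii$ and using $\m,w\vDash\neg\fii$ gives $W\vDash\neg\fii(\psi_{p_0}\til\psi_{p_n})$, hence $\mathrm{ZFC}\nvdash\fii(\psi_{p_0}\til\psi_{p_n})$, so this substitution instance witnesses $\fii\notin\mathsf{MLF}(\Gamma)$; therefore $\mathsf{MLF}(\Gamma)\con\mathsf{S4.2}$. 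Since both ingredients --- the completeness theorem for $\mathsf{S4.2}$ and the labeling lemma --- are already in hand, there is no substantial obstacle; the one place calling for care is the reduction to an initial falsifying world sketched above.
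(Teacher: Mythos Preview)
Your approach is the paper's: use completeness of $\mathsf{S4.2}$ over finite pBAs, apply the hypothesis to the resulting frame to get a labeling, then invoke Lemma~\ref{lem:labeling}. The paper does this in two sentences and suppresses the initial-world issue entirely; your reduction to the upward cone of $w$ is the right way to make that precise, and the cone is indeed again a finite pBA with $w$ in its bottom cluster.

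One step does not work as written. Making the valuation constant on the bottom cluster \emph{can} change truth at $w$: since $w$ accesses every world of its own cluster, if $p$ holds only at some sibling $w'\ne w$ in that cluster and nowhere above, then $\m,w\vDash\pos p\land\neg p$ before the change but $\m',w\vDash\neg\pos p$ after. So you cannot conclude that $\neg\fii$ persists at $w$ in the modified model, and hence not at $w_0$ either. The repair is simpler than your surgery: once you have passed to the cone, $w$ is already an initial world, and the hypothesis as the paper intends it (cf.\ the proof of Theorem~\ref{thm:S4.2 labeling}, where the initial world is chosen freely by renumbering the bottom cluster) lets you take the labeling with $w_0=w$. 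Alternatively, swap $w$ and $w_0$ by the obvious frame automorphism of the cone and transport the valuation along it; then $\neg\fii$ holds at $w_0$ in the transported model and Lemma~\ref{lem:labeling} applies directly. With either fix your argument goes through and matches the paper's.
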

\begin{proof}
By theorem \ref{thm:S4.2 comp}, every modal formula $\fii\notin\mathsf{S4.2}$
fails in a Kripke model based on some finite pBA. So, given a $\Gamma$-labeling
for this frame over a model $W$, by the labeling lemma there
is a substitution instance of $\fii$ which fails at $W$ under the
$\Gamma$-forcing interpretation. So $\fii\notin\mathsf{MLF}\left(\Gamma\right)$.
\end{proof}
Hence to establish upper bounds, we try to find labelings for specific
frames. Various labelings can be constructed using certain kinds of
set-theoretic statements, called in general \emph{control statements.}
\begin{defn}[Control statements]\label{def:control-statements}
Let $W$ be some model of set theory, and $\Gamma$ some class of
forcing notions. 

\begin{enumerate}
\item 
	A \emph{switch for $\Gamma$-forcing over $W$ }is a statement $s$
	such that necessarily, both $s$ and $\neg s$ are possible. That is, over every $\Gamma$-extension of $W$ one can force $s$ or $\neg s$ as one chooses using $\Gamma$-forcing.
\item 
	An\emph{ $n$-switch for $\Gamma$-forcing over $W$} is a set of
	statements $\left\{ s_{i}\mid i<n\right\} $ (where $n>1$) such that every $\Gamma$-generic extension $W'$ of $W$ satisfies exactly
	one $s_{i}$, and every $s_j$ is necessarily possible, i.e. over every $\Gamma$-extension of $W$ one can force $s_j$ using $\Gamma$-forcing. The $n$-switch
	value in some $W\left[G\right]$ is the $i$ such that $W\left[G\right]\vDash s_{i}$.
	Note that a $2$-switch is essentially just a switch. 
	\footnote{In \cite{Hamkins-Linnebo}, Hamkins and Linnebo independently define the notion of a "dial", which is essentially the same as an $n$-switch, and prove results similar to our theorem \ref{thm:S4.2 labeling} and lemma \ref{lem:n-switch}. Our results were independent of these.}
\item
 	A \emph{button for $\Gamma$-forcing over $W$} is a statement $b$
	which is necessarily possibly necessary, i.e. $W\vDash\nec\pos\nec b$.
	This means that in every $\Gamma$-extension of $W$, we can force $b$ to be
	true using $\Gamma$-forcing and to remain true in every further $\Gamma$-extension. A button is called
	\emph{pushed} if $\nec b$ holds, otherwise it is called \emph{unpushed}.
	A \emph{pure button} is a button $b$ such that $\nec\left(b\to\nec b\right)$
	(i.e. if it is true then it is pushed). If $b$ is an unpushed button
	then $\nec b$ is an unpushed pure button.
\item 	
	A \emph{ratchet for $\Gamma$-forcing over $W$ }is a 	collection of pure buttons $\left\{ r_{i}\mid i\in I\right\} $, possibly with $i$ as a parameter, where $I$ is well-ordered, such that pushing $r_{i}$
	pushes every $r_{j}$ for $j<i$, and necessarily, every unpushed
	$r_{i}$ can be pushed without pushing any $r_{j}$ for $j>i$. An
	infinite ratchet $\left\{ r_{i}\mid i\in I\right\} $ is called \emph{strong
	}if there is no $\Gamma$-extension of $W$ satisfying every $r_{i}$.
	The ratchet value in $W\left[G\right]$ is the first $i\in I$ such
	that $W\left[G\right]\vDash\neg r_{i}$.
\item 
	A family of control statements (switches, $n$-switches, buttons,
	ratchets) is called \emph{independent over $W$} (for $\Gamma$-forcing) if in $W$, all buttons are
	unpushed (including the ones in any ratchet), and necessarily, using $\Gamma$-forcing, each
	button can be pushed, each switch can be turned on or off, the value
	of each $n$-switch can be changed, and the value of every ratchet
	can be increased, without affecting any other control statement in
	the family.

Note the ``necessarily'' -- the independence needs to be preserved
in any $\Gamma$-forcing extension of $W$.

\end{enumerate}
\end{defn}
$n$-switches are less naturally occurring in set theory than the other notions, and indeed
they were not explicitly defined in \cite{MLF} and \cite{StC}. However,
by examining the proofs of some of the main theorems there, one can
see that what was implicitly used was an $n$-switch, which was constructed
using switches (cf. \cite[theorems 10,11,13]{StC}). Additionally,
in some cases switches were constructed from ratchets and then transformed
into $n$-switches (e.g. in \cite[theorems 12, 15]{StC}). So, in
the definition of some of the central labelings, $n$-switches turn
out to be the more natural notion, and we will show how to construct
them using either switches or a ratchet independently. Hence the following
theorem, which gives sufficient conditions for the existence of labelings
for finite pBA's, generalizes some of the above-mentioned theorems
from \cite{StC}, and they can be inferred from it.
 We will not be able to use the theorem as it is to prove our main theorem, but we will use its proof as a model, so it has instructive value in itself.
\begin{thm}
\label{thm:S4.2 labeling}Let $\Gamma$ be some reflexive and transitive
forcing class and $W$ a model of set theory. If for every $m,n<\omega$ there is a family of $m$ buttons mutually independent from an $n'$-switch for some $n'\geq n$
then there is a $\Gamma$-labeling over $W$ for every frame which is a finite pre-Boolean-algebra.
\end{thm}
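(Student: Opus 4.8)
The plan is to reduce the construction of a $\Gamma$-labeling of a finite pre-Boolean-algebra $\langle F,\leq\rangle$ to the combinatorics of buttons and $n$-switches. Write $F/\!\equiv$ as a finite Boolean algebra $B$; since $B$ is finite it is isomorphic to the power set of its set of atoms $A=\{a_0,\dots,a_{k-1}\}$, so an element of $B$ is coded by a subset $S\subseteq A$, equivalently by a binary string of length $k$. The key idea is that a button records ``this atom has been switched off forever'' while an $n$-switch (for $n$ equal to the size of a cluster) records which world inside the current cluster we are in. More precisely: by hypothesis fix, for the given $k$ and for $n$ at least as large as the largest cluster size in $F$, a family of $k$ buttons $b_0,\dots,b_{k-1}$ that is mutually independent together with an $n$-switch $\{s_0,\dots,s_{n-1}\}$ (pad $k$ and $n$ upward as needed, using that the families are ``arbitrarily large''). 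In $W$ all buttons are unpushed; using reflexivity this is the initial state $w_0$, which should be placed in the cluster of the top element $\one_B$ of $B$ (the empty set of pushed buttons) at switch-value $0$.

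The labeling statements are then defined by: for a world $w\in F$ lying in the cluster corresponding to $S\subseteq A$, and occupying the $i$-th position within its cluster under some fixed enumeration of each cluster, set
\[
\Phi_w \;:=\; \Bigl(\bigwedge_{a_j\in S}\nec b_j\Bigr)\wedge\Bigl(\bigwedge_{a_j\notin S}\neg\nec b_j\Bigr)\wedge s_i .
\]
The first step is to check clause (1) of Definition~\ref{def:labeling}: in any $\Gamma$-extension of $W$ the set of pushed buttons among $b_0,\dots,b_{k-1}$ determines a unique $S\subseteq A$, hence a unique cluster, and the $n$-switch value (restricted to the range of positions actually used in that cluster) picks out a unique $w$; mutual independence guarantees that the $n$-switch value can be made to land in the needed range, and that every such combination is realized, giving a genuine partition of truth. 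The second step, clause (3), is immediate from the choice of $w_0$: in $W$ no $b_j$ is pushed, so $S=\emptyset$, i.e. the cluster of $\one_B$, and we arrange the enumeration so $w_0$ sits at position $0$.

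The heart of the argument is clause (2): from a world satisfying $\Phi_w$ with $w$ in cluster $S$, the $\Gamma$-reachable worlds are exactly those $u$ with $w\leq u$ in $F$, i.e. those $u$ whose cluster $S'$ satisfies $S'\subseteq S$ in $B$ (since in a pre-Boolean-algebra $x\leq y$ iff the $B$-classes satisfy $[x]\leq[y]$, and in the power-set algebra $S'\leq S$ means $S'\subseteq S$)---note the order convention makes ``forcing further'' correspond to pushing more buttons, hence shrinking $S$. For the forward direction: to reach a given such $u$ one pushes exactly the buttons $b_j$ with $a_j\in S\smin S'$ (possible since they are currently unpushed and, by independence, pushable without disturbing the already-pushed buttons or the other statements) and then sets the $n$-switch to the position of $u$ in its cluster (possible by independence of the $n$-switch from the buttons). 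For the reverse direction: no $\Gamma$-extension can make $\nec b_j$ true for an $a_j\notin S$, because $b_j$ is currently an \emph{unpushed} button and a pushed button is necessarily pushed---so once $\neg\nec b_j$ holds it persists? Here one must be slightly careful: $\neg\nec b_j$ need not persist in general, but $\nec b_j$ \emph{does} persist (axiom $\mathrm{4}$ applied to the button, using transitivity of $\Gamma$), so the set of pushed buttons only grows, hence $S$ only shrinks, and no $u$ with $S'\not\subseteq S$ is reachable. Combining both directions gives $W[G]\vDash\pos\Phi_u$ iff $w\leq u$ iff $wRu$, which is clause (2).

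The step I expect to be the genuine obstacle is verifying that the mutual-independence hypothesis is strong enough to drive clause (2) uniformly: one needs, in \emph{every} $\Gamma$-extension $W[G]$ (not just in $W$), that the still-unpushed buttons can be pushed in any combination and the $n$-switch retargeted, all without side effects---this is exactly why Definition~\ref{def:control-statements}(5) builds the ``necessarily'' into independence, and the proof must invoke that clause at each reachable world rather than only at $W$. A secondary technical point is bookkeeping the padding: the largest cluster in a finite pBA has some fixed finite size $m$, and $B$ has some fixed number $k$ of atoms, so one simply chooses a family of $\max(k,1)$ buttons independent of an $n$-switch with $n\geq m$, which the hypothesis supplies; positions in clusters of size less than $n$ use only an initial segment of the switch values, and the unused values are harmless since clause (1) only requires that the realized combinations partition truth. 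Once these are in place, the labeling lemma (Lemma~\ref{lem:labeling}) and Corollary~\ref{cor:S4.2 upper bound} are not needed here---we are only asked to produce the labeling---so the proof ends with the verification of the three clauses.
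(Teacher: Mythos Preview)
Your overall strategy coincides with the paper's: identify the quotient Boolean algebra with a power set, use a family of buttons (one per atom) to record the cluster, and use an $n$-switch to record the position inside the cluster. Two things in your write-up need repair.

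\textbf{Orientation of the order.} Your formula $\Phi_w=\bigl(\bigwedge_{a_j\in S}\nec b_j\bigr)\wedge\bigl(\bigwedge_{a_j\notin S}\neg\nec b_j\bigr)\wedge s_i$ makes $S$ the set of \emph{pushed} buttons, but the surrounding prose treats $S$ as the set of unpushed ones: you place the initial world at $\one_B$ ``(the empty set of pushed buttons)'', you say forcing ``shrinks $S$'', and you claim $w\le u$ corresponds to $S'\subseteq S$. With the formula as written, the initial world (no buttons pushed) has $S=\varnothing=0_B$, not $\one_B$; forcing pushes buttons and hence \emph{enlarges} $S$; and $w\le u$ corresponds to $S\subseteq S'$. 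As stated, your clause~(3) check fails (in $W$ you would need all $\nec b_j$ to hold), and your clause~(2) forward direction asks you to ``push the buttons $b_j$ with $a_j\in S\smin S'$'', i.e.\ buttons that are already pushed. The paper's convention is the consistent one: the initial world sits in the bottom cluster $\varnothing$, and to move from cluster $A$ to $A'\supseteq A$ one pushes the buttons indexed by $A'\smin A$.

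\textbf{Unused switch values are not harmless.} You take $n\ge m$ where $m$ is the largest cluster size and assert that, in a cluster of size $<n$, the extra switch values ``are harmless since clause~(1) only requires that the realized combinations partition truth''. But every combination \emph{is} realized: by the very definition of independence, from any $\Gamma$-extension you can set the $n$-switch to any value $j<n$ without disturbing the button pattern. So there are $\Gamma$-extensions of $W$ whose button pattern lands in a small cluster while the switch value exceeds that cluster's size; such an extension satisfies no $\Phi_w$, and clause~(1) fails. The paper avoids this by first padding every cluster with dummy worlds so that all clusters have the \emph{same} size $n$ (this does not change the quotient or satisfaction of modal formulas), and only then choosing an $n$-switch. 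You should either do the same, or modify the labels so that out-of-range switch values are absorbed into one designated world of the cluster.
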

\begin{proof}
Let $\left\langle F,\leq\right\rangle $ be a finite pBA. As noted
earlier, it can be viewed a finite BA, where each element is replaced
by a cluster of equivalent worlds. We
can add dummy worlds to each cluster without changing satisfaction in the model, so we can assume that each cluster is of size $n$
for some $1<n<\omega$. \footnote{If every cluster has only one element then we actually don't need the $n$-switch, and we can label the BA only with the buttons.}
 It is known that any finite BA is isomorphic
to the BA $\left\langle \mathcal{P}\left(B\right),\con\right\rangle $
for some finite set $B$. Let $B$ be such that $\left\langle F/\equiv,\leq\right\rangle \cong\left\langle \mathcal{P}\left(B\right),\con\right\rangle $,
and set $m=\left|B\right|$. We can assume that in fact $B=\left\{ 0\til m-1\right\} $.
There is a correspondence between subsets $A\con B$ and clusters
in $\left\langle F,\leq\right\rangle $. Each cluster is of size $n$,
so by enumerating each cluster, all the elements of $F$ can be named
$w_{i}^{A}$ for $i<n$ and $A\con B$, where $w_{i}^{A}\leq w_{j}^{A'}$
iff $A\con A'$. An initial world in $F$ must be in the bottom cluster,
which corresponds to $\emp\con B$ so without loss of generality it is enumerated as $w_{0}^{\emp}$.

By the assumption, adding more dummy worlds to each cluster if needed and increasing $n$,
there are buttons $\left\{ b_{0}\til b_{m-1}\right\} $ and an $n$-switch
$\left\{ s_{0}\til s_{n-1}\right\} $ all independent of each other
over $W$. We can assume the buttons are pure. To define a labeling,
each cluster, corresponding to some $A\con B$, will be labeled by
the statement that the only buttons pushed are the ones with indexes
from $A$. Inside each cluster, each world will be labeled by the
corresponding value of the $n$-switch. Formally, we set 
\[
\Phi\left(w_{i}^{A}\right)=\bigwedge_{j\in A}b_{j}\land\bigwedge_{j\notin A}\neg b_{j}\land s_{i}
\]
 and claim that this is a labeling as required by verifying the conditions:

\begin{enumerate}
\item If $W\left[G\right]$ is a $\Gamma$-generic extension of $W$, define
$A=\left\{ j<m\mid W\left[G\right]\vDash b_{j}\right\} $. By the
definition of the $n$-switch, $W\left[G\right]\vDash s_{i}$ for
some unique $i<n$. So it is clear that $W\left[G\right]\vDash\Phi\left(w_{i}^{A}\right)$,
and that for any other pair $\left(A',i'\right)\ne\left(A,i\right)$
with $A'\con B$, and $i'<n$, $W\left[G\right]\nvDash\Phi\left(w_{i'}^{A'}\right)$.
So these statements indeed form a mutually exclusive partition of
truth in the $\Gamma$-generic multiverse over $W$.
\item Assume $W\left[G\right]$ is a $\Gamma$-generic extension of $W$
such that $W\left[G\right]\vDash\Phi\left(w_{i}^{A}\right)$. 

If $w_{i}^{A}\leq u$, then as we have seen, $u=w_{i'}^{A'}$ for
some $i'<n$ and $A\con A'\con B$. By the assumption of independence
of the control statements, we can, by $\Gamma$-forcing, push all
the buttons in $A'\smin A$ (and only them) and change the $n$-switch
value to $i'$ (if needed), to obtain an extension of $W\left[G\right]$
satisfying $\Phi\left(w_{i'}^{A'}\right)$. Note that by the transitivity
of $\Gamma$, the $b_{i}$'s are still independent pure buttons in
$W\left[G\right]$, since every $\Gamma$-extension of $W\left[G\right]$
is also a $\Gamma$-extension of $W$. In particular, any button true
in $W\left[G\right]$ remains true in the extension. So $W\left[G\right]\vDash\pos\Phi\left(w_{i'}^{A'}\right)$
as required.

If $W\left[G\right]\vDash\pos\Phi\left(w_{i'}^{A'}\right)$, then
there is some extension $W\left[G\right]\left[H\right]\vDash\Phi\left(w_{i'}^{A'}\right)$.
By the definition of pure buttons and the reflexivity of $\Gamma$,
$W\left[G\right]\vDash\bigwedge_{j\in A}b_{j}$ implies $W\left[G\right]\vDash\bigwedge_{j\in A}\nec b_{j}$,
so $W\left[G\right]\left[H\right]\vDash\bigwedge_{j\in A}b_{j}$.
Therefore by the definition of $\Phi\left(w_{i'}^{A'}\right)$, we
must have $A\con A'$, so $w_{i}^{A}\leq w_{i'}^{A'}$.
\item A part of the definition of independence is that no button is pushed
in $W$ (since they are pure and $\Gamma$ reflexive, it is equivalent
to saying none is true). We can assume without loss of generality that $W\vDash s_{0}$.
So $W\vDash\Phi\left(w_{0}^{\emp}\right)$.\qedhere
\end{enumerate}
\end{proof}

\begin{cor}
\label{cor:S4.2 contains}Under the assumptions of theorem \ref{thm:S4.2 labeling},
$\mathsf{MLF}\left(\Gamma\right)\con\mathsf{S4.2}$.
\end{cor}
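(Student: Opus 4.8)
The plan is simply to compose Theorem~\ref{thm:S4.2 labeling} with Corollary~\ref{cor:S4.2 upper bound}; no new ideas are needed. The phrase ``under the assumptions of Theorem~\ref{thm:S4.2 labeling}'' hands us a reflexive and transitive forcing class $\Gamma$, a model $W$ (which we read as a model of $\mathrm{ZFC}$, as elsewhere in the paper), and arbitrarily large finite families of buttons that are mutually independent of $n$-switches for large enough $n$. Theorem~\ref{thm:S4.2 labeling} then directly provides, for every frame $\langle F,R\rangle$ which is a finite pre-Boolean-algebra, a $\Gamma$-labeling of $\langle F,R\rangle$ over $W$.

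Next I would observe that, since $W\vDash\mathrm{ZFC}$, the conclusion just obtained says exactly that every finite pre-Boolean-algebra admits a $\Gamma$-labeling over some model of $\mathrm{ZFC}$, namely $W$. This is precisely the hypothesis of Corollary~\ref{cor:S4.2 upper bound}, so applying that corollary yields $\mathsf{MLF}(\Gamma)\con\mathsf{S4.2}$, which completes the proof.

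I expect no real obstacle here. The only bookkeeping point worth spelling out is the identification of ``model of set theory'' with ``model of $\mathrm{ZFC}$'', which is what allows the conclusion—an assertion about $\mathrm{ZFC}$-provability of substitution instances—to go through; note also that the existence of such a $W$ already presupposes $\mathrm{Con}(\mathrm{ZFC})$, so the degenerate case in which $\mathsf{MLF}(\Gamma)$ would be the set of all modal formulas does not arise. All of the substantive forcing-theoretic content has already been discharged inside the proof of Theorem~\ref{thm:S4.2 labeling} and, via the labeling lemma, inside the proof of Corollary~\ref{cor:S4.2 upper bound}.
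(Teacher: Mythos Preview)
Your proposal is correct and matches the paper's own proof, which simply reads ``Apply corollary~\ref{cor:S4.2 upper bound}.'' You have merely made explicit the chain Theorem~\ref{thm:S4.2 labeling} $\Rightarrow$ hypothesis of Corollary~\ref{cor:S4.2 upper bound} $\Rightarrow$ conclusion, together with the harmless bookkeeping about $W$ being a model of $\mathrm{ZFC}$.
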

\begin{proof}
Apply corollary \ref{cor:S4.2 upper bound}.
\end{proof}
\begin{lem}
\label{lem:n-switch}An $n$-switch can be produced using the following
control statements:

\begin{enumerate}
\item Independent switches $s_{0}\til s_{m-1}$ if $n=2^{m}$;
\item A strong ratchet $\left\{ r_{i}\mid i\in I\right\} $ where $I$ is
either a limit ordinal or $Ord$, the class of all ordinals, and $i\in I$
is a parameter in $r_{i}$.
\item A family of independent buttons $\left\langle b_{i}\mid i\in I\right\rangle $
where $I$ is as above, with no extensions where all of the buttons are pushed.
\end{enumerate}
\end{lem}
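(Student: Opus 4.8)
The plan is to treat the two cases separately, in each case coding the $n$ values of the desired switch into configurations of the given control statements.

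For part (1), with $n=2^{m}$, I would use the binary expansion of $i<n$ to record which of the $m$ switches are on. Writing $i=\sum_{k<m}\epsilon_{k}(i)\,2^{k}$ with $\epsilon_{k}(i)\in\{0,1\}$, set
\[
t_{i}\ :=\ \bigwedge_{\epsilon_{k}(i)=1}s_{k}\ \land\ \bigwedge_{\epsilon_{k}(i)=0}\neg s_{k}.
\]
I would then verify the two clauses of the definition of an $n$-switch (item (2) of definition \ref{def:control-statements}). Any $\Gamma$-generic extension $W'$ of $W$ assigns each $s_{k}$ a truth value, so there is exactly one $i<n$ with $W'\vDash t_{i}$; and since $n=2^{m}>1$ (so $m\ge1$) this is a genuine $n$-switch. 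If $W'\vDash t_{i}$ and $j\ne i$, I pass from the configuration coded by $i$ to the one coded by $j$ one coordinate at a time: for each $k$ with $\epsilon_{k}(i)\ne\epsilon_{k}(j)$ turn $s_{k}$ on or off as dictated by $\epsilon_{k}(j)$ without disturbing the other switches — possible because independence of the switches is preserved under $\Gamma$-forcing — and compose these finitely many $\Gamma$-extensions, which is legitimate since $\Gamma$ is transitive. The resulting model satisfies $t_{j}$.

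For part (2), the plan is to read off the residue modulo $n$ of the ratchet value. Since the ratchet is strong and $I$ is well-ordered, the ratchet value of a $\Gamma$-generic extension $W[G]$ — the least $i\in I$ with $W[G]\vDash\neg r_{i}$ — is always well-defined, and, since $i$ occurs as a parameter in $r_{i}$, the statement recording this value is expressible in the language of set theory. Using the unique representation $\alpha=\omega\cdot\gamma+r$ with $r<\omega$, I would define $\alpha\bmod n$ to be $r\bmod n$, so that passing from $\alpha$ to $\alpha+1$ advances the residue by $1$ modulo $n$, and then set, for $k<n$, $s_{k}$ to be the statement ``the ratchet value is $\equiv k\pmod{n}$''. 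Every $\Gamma$-generic extension satisfies exactly one $s_{k}$. For the switching clause, the crucial point is that one can always raise the ratchet value by exactly one via a single $\Gamma$-forcing: if the current value is $\alpha$ then $r_{\alpha}$ is an unpushed pure button, and the ratchet axiom lets us push it without pushing any $r_{j}$ with $j>\alpha$; afterwards all $r_{\beta}$ with $\beta\le\alpha$ hold, while $r_{\alpha+1}$ — which lies in $I$ because $I$ is a limit ordinal or $\mathit{Ord}$ — is unpushed and hence, being pure, false, so the new value is precisely $\alpha+1$. Since pure buttons stay true once true, the value can never be lowered, so from $W'\vDash s_{i}$ I reach a model of $s_{j}$ by composing $(j-i)\bmod n$ such unit steps.

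The one step I expect to require care is the claim in part (2) that a single push of $r_{\alpha}$ raises the value by exactly $1$ rather than by more: this is where purity of the ratchet buttons is essential (it forces $r_{\alpha+1}$ to be false once we know it is unpushed), and it is also where the hypothesis that $I$ is a limit ordinal or $\mathit{Ord}$ enters, guaranteeing that $\alpha+1$ is still an index of the ratchet so the incrementing procedure never stalls at the top of $I$. Everything else should be routine unwinding of the forcing interpretation of the axioms for switches and ratchets.
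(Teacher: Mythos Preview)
Your proposal is correct and follows essentially the same approach as the paper: binary patterns of the switches for part (1), and the residue modulo $n$ of the finite part of the ratchet value for part (2). The only cosmetic difference is that in part (2) you advance the ratchet one step at a time (invoking transitivity of $\Gamma$ to compose finitely many increments), whereas the paper pushes a single button $r_{i'}$ with the desired residue in one move; both arguments are valid.
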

\begin{proof}
For (1), if $j<2^{m}$ let $\bar{s}_{j}$ be the statement that the
pattern of switches corresponds to the binary digits of $j$, that
is, 
\[
\bigwedge\left\{ s_{i}\mid\mbox{the \ensuremath{i}-th binary digit of \ensuremath{j}\,\ is \ensuremath{1}}\right\} \land\bigwedge\left\{ \neg s_{i}\mid\mbox{the \ensuremath{i}-th binary digit of \ensuremath{j}\,\ is \ensuremath{0}}\right\} .
\]
 Clearly in any extension exactly one pattern of the switches holds,
so exactly one $\bar{s}_{j}$ holds. By the independence of the switches,
any pattern can be forced over any extension.

For (2), every $i\in I$ is an ordinal, so of the form $\omega\cdot\alpha+k$
for some $\alpha\in Ord$ and $k<\omega$. Then we let $\bar{s}_{j}$
be the statement ``if $i=\omega\cdot\alpha+k$ is the first such
that $\neg r_{i}$ then $k\bmod n=j$''. Since no extension satisfies
all the $r_{i}$'s, there is always some $i$ which is the first such
that $\neg r_{i}$, and therefore there is some unique $j$ such that
$\bar{s}_{j}$ holds. Since it is a ratchet, in every extension, for
every $j'<n$, we can increase its value to some $i'=\omega\cdot\alpha'+k'$
for some $k'>k$ such that $k'\bmod n=j$ (we use the assumption that
if $I$ is an ordinal then it is a limit).

(3) is similar to (2) by setting $r_i=(\forall j<i)\,b_{j}\land\neg b_{i}$
\end{proof}

So with our previous theorem, we get the following:
\begin{cor}[{\cite[theorems 13 and 15]{StC}
		\footnote{In \cite[theorem 15]{StC} the authors have a slightly different convention, where the ratchet value is the last button which \emph{is} pushed, and they use the notion of a \emph{uniform ratchet}, but the theorem is essentially the same.}}]
\label{cor: S4.2 contains 2} Let $\Gamma$ be some reflexive and
transitive forcing class and $W$ a model of set theory. If there
are arbitrarily large finite families of buttons mutually independent
with arbitrarily large finite families of switches, with a strong
ratchet as above or with another family of independent buttons as
above, then there is a $\Gamma$-labeling for every frame which is
a finite pre-Boolean-algebra over $W$. So in such cases, $\mathsf{MLF}\left(\Gamma\right)\con\mathsf{S4.2}$.
\end{cor}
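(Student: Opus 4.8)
The plan is to deduce this directly from theorem \ref{thm:S4.2 labeling}: all we must do is check that the control statements hypothesised here yield, for arbitrarily large finite button families, mutual independence with an $n$-switch for arbitrarily large $n$. So fix a desired button-family size $m$ and a target $n$. By hypothesis $W$ carries a finite family of buttons $b_0\til b_{m-1}$, a finite family of switches, and (according to the case) either a strong ratchet $\{r_i\mid i\in I\}$ as in lemma \ref{lem:n-switch}(2) or a further family of independent buttons $\{b'_i\mid i\in I\}$ admitting no $\Gamma$-extension of $W$ in which all of them are pushed, with $I$ a limit ordinal or $Ord$ --- and all of these are mutually independent over $W$ for $\Gamma$-forcing, and necessarily so.

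First I would build the $n$-switch. In the ratchet case, apply lemma \ref{lem:n-switch}(2) to $\{r_i\mid i\in I\}$ to obtain an $n$-switch $\{\bar s_0\til\bar s_{n-1}\}$ for the given $n$; in the extra-buttons case, first invoke the observation following lemma \ref{lem:n-switch}, putting $r_i=\forall j<i\,b'_j\land\neg b'_i$, to convert $\{b'_i\mid i\in I\}$ into a strong ratchet of the required form, and then proceed as before. (Alternatively one can forgo the ratchet and extra buttons and apply lemma \ref{lem:n-switch}(1) to the switch family: $k$ mutually independent switches give a $2^k$-switch, and $k$ may be taken as large as we like; since in the proof of theorem \ref{thm:S4.2 labeling} the clusters of the pBA are padded to an arbitrary common finite size, it is enough to realise $n$-switches for arbitrarily large $n$.)

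The only step requiring genuine verification --- and the main, if modest, obstacle --- is that the $n$-switch so produced is still mutually independent of the buttons $b_0\til b_{m-1}$ over $W$. Here one uses that in lemma \ref{lem:n-switch} each $\bar s_j$ is a Boolean combination of the switches (resp.\ of the ratchet statements, resp.\ of the $b'_i$), and that the value of the $n$-switch is changed by altering a switch pattern (resp.\ by increasing the ratchet, which in turn is effected by pushing some $b'_i$); since the switches / ratchet / $b'_i$ were assumed mutually independent of the $b_i$'s, and necessarily so, any $\Gamma$-forcing that changes the $n$-switch value affects no $b_i$, and any $\Gamma$-forcing that pushes a $b_i$ changes neither the switch pattern nor the ratchet value, hence leaves the $n$-switch value fixed --- and all of this persists under further $\Gamma$-forcing. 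Thus $b_0\til b_{m-1}$ together with $\{\bar s_0\til\bar s_{n-1}\}$ witness the hypothesis of theorem \ref{thm:S4.2 labeling}, which therefore produces a $\Gamma$-labeling of every finite pre-Boolean-algebra over $W$; and then $\mathsf{MLF}(\Gamma)\con\mathsf{S4.2}$ follows exactly as in corollary \ref{cor:S4.2 contains}, via corollary \ref{cor:S4.2 upper bound}.
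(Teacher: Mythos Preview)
Your proposal is correct and follows essentially the same route the paper intends: the corollary is meant as an immediate consequence of theorem~\ref{thm:S4.2 labeling} together with lemma~\ref{lem:n-switch} and the observation following it, plus corollary~\ref{cor:S4.2 contains}. One small remark on reading the hypothesis: the three items (arbitrarily large switch families, a strong ratchet, an extra independent button family) are \emph{disjunctive} alternatives to be paired with the buttons, not ``switches together with one of the latter two''; but since your argument handles each of the three cases and verifies the needed mutual independence of the resulting $n$-switch with the buttons, this parsing slip does no harm.
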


Note that in the above corollary, 
we must have independent buttons which are not all pushed. However, the following theorem shows that we can weaken this assumption, given that we have \emph{some} $n$-switch -- not necessarily independent. It is this modification which will eventually be used to find a $\sigma$-centered labeling for finite pBA's.

\begin{thm} \label{main-labeling}
	 Let $\Gamma$ be some reflexive and transitive forcing class and $W$ a model of set theory. If there is a family $\left\langle b_{i}\mid i \in \omega\right\rangle $ of independent buttons for $\Gamma$-forcing over $W$, where $i$
	is a parameter in $b_{i}$, and for every $n$ there is an $n$-switch  for $\Gamma$-forcing over $W$, then there is a $\Gamma$-labeling for every finite pBA, and thus $\mathsf{MLF}\left(\Gamma\right)\con\mathsf{S4.2}$.
\end{thm}
\begin{proof}	
	Let $\left\langle F,\leq\right\rangle $ be a pBA. As before  Let $B=\{0,...,m-1\}$ be such that $\left\langle F/\!\equiv,\leq\right\rangle $
	is isomorphic to $\left\langle \mathcal{P}\left(B\right),\con\right\rangle$, and every cluster is without loss of generalization of size $n$ or some $1<n<\omega$.
	So every world in $F$ is of the form $w_{j}^{C}$ for some $j<n$
	and $C\con B$, and we have $w_{j}^{C}\leq w_{j'}^{C'}$ iff $C\con C'$.
	
	Let $\left\langle b_{i}\mid i \in \omega\right\rangle $ be as in the assumption, and using $i\mapsto i-m+1$ we rename it as $\left\langle b_{i}\mid m-1\leq i < \omega\right\rangle $. We will imitate the proof of theorem \ref{thm:S4.2 labeling} by using the statements $b_{i}$ for $m-1\leq i\leq0$ as the buttons, and obtaining from $\left\langle b_{i}\mid 0<i<\omega\right\rangle $ an ``almost'' $n$-switch as in \ref{lem:n-switch}. It might not be a real $n$-switch, if there is some $\Gamma$-extension in which unboundedly many buttons are pushed, and for that reason we need the additional $n$-switch from the assumption. 
	
	So let $\left\{ s_{j}\mid j<n\right\}$ be the $n$-switch from the assumption. To define the ``almost'' $n$-switch, define the following statements: 
	\begin{align*}
	R_0 &= \text{``}\neg b_i \text{ holds for every } 0<i<\omega\text{''} \\
	\nonumber R_{j}&= \text{``\ensuremath{j} is the largest such that \ensuremath{b_j} holds'' (for \ensuremath{j<\omega})} \\
	\nonumber R_{\omega}&= \text{``}\sup\left\{ n\mid b_{n} \text{ holds}\right\} =\omega \text{''}
	\end{align*}

	So $R_{0}$ holds iff no button is pushed, and if in some $\Gamma$-extension of $W$ we have $R_{j}$ for $0<j<\omega$, then in particular we have $b_{j}\land \neg b_{l}$ for any $l>j$. So by the independence of the buttons, if some extension satisfies $R_i$ ($i<\omega$), we can force with some $\Gamma$-forcing to push only $b_l$ for any $l>i$ and obtain exactly $R_{l}$. Note that if some $R_{j}$ for $j<\omega$ holds, it means in particular that the number of pushed buttons is	bounded. Now, for every $j<n$ we define the statement:
	\begin{align*}	
	 t_{j}&=\text{``There is some } k<\omega \text{ such that } k\equiv j \bmod n \text{ and } R_k \text{ holds''.}
	\end{align*}
	So in any $\Gamma$-extension of $W$, if $ t_{j}$
	holds for some $j$, there is some $k$ be such that $R_{k}$ holds, and for every $j'<n$ we can find $k'>k$ with $k'\equiv j'\bmod n$ and then force to push only $b_k'$ to obtain $R_{k'}$ and thus $ t_{j'}$. It is also clear that no two distinct $ t_{j}$'s can hold at the same time, and that if the number of $i$'s such that $b_i$ holds is bounded,
	then some $ t_{j}$ holds. So, $\left\{  t_{j}\mid j<n\right\} $
	functions as an $n$-switch, but only as long as the number of pushed
	buttons is bounded. If in some $\Gamma$-extension there are unboundedly
	many buttons pushed (which we allow as a possibility), no $R_{k}$
	holds, so also no $ t_{j}$ holds. Hence this is ``almost'' an $n$-switch. 
	
	Now we are ready to define the labeling. For every $C\con B$, define 
	\[
	\Psi_{C}=\bigwedge_{i\in C} b_{-i}\land\bigwedge_{i\notin C}\neg b_{-i}
	\]
	which states that the pushed buttons out of $\left\{ b_{-i}\mid i\in B\right\} $
	are exactly the ones corresponding to the elements in $C$. These statements label the cluster we are in. To move within each cluster below the topmost one, we will use the ``almost''	$n$-switch $\left\{  t_{j}\mid j<n\right\} $, and if we can no longer
	use it, that is, if there are unboundedly many $b_{i}$'s pushed,
	we put ourselves in the top cluster, and there we move using the $n$-switch
	$\left\{  s_{j}\mid j<n\right\} $: for every $C\subseteq B$ and $j<n$ set
	\begin{equation}
		\Phi\left(w_{j}^{C}\right)= \begin{cases}
		 	\Psi_{C}\land t_{j} & C\ne B\\
			\left(\Psi_{C}\land t_{j}\right)\lor\left(R_{\omega}\land s_{j}\right) & C=B
	\end{cases}
		\end{equation}
	 In this way, the fact that $\left\{  s_{j}\mid j<n\right\} $ is not independent of the buttons will not affect us,	as we will always stay in the top cluster anyway. We will now show
	that this is indeed a labeling as required.

	\subsubsection*{The statements are mutually exclusive:}
	
	It is clear that the statements $\left\{ \Psi_{C}\mid C\con B\right\} $
	are mutually exclusive, so $\Phi\left(w_{j}^{C}\right),\Phi\left(w_{j'}^{C'}\right)$
	for $C\ne C'$, both different than $B$, clearly exclude each other.
	If we look at $\Phi\left(w_{j}^{C}\right)$ and $\Phi\left(w_{j'}^{B}\right)$
	for some $C\ne B$, they exclude each other since if $\Phi\left(w_{j'}^{B}\right)$
	holds, then either $\Psi_{B}$ holds which excludes $\Psi_{C}$, or we have
	$\sup\left\{ n\mid b_{n}\text{ holds}\right\} =\omega$,
	which excludes $ t_{j}$. Now for $j\ne j'$ if $C\subsetneq B$,
	$\Phi\left(w_{j}^{C}\right)$ and $\Phi\left(w_{j'}^{C}\right)$ exclude
	each other since $ t_{j}$ and $ t_{j'}$ exclude each other;
	and if $C\!=\!B$, if $\sup\{ n\mid b_{n}\text{ holds}\} =\omega$
	then $ s_{j}$ and $ s_{j'}$ exclude each other, and otherwise
	again $ t_{j}$ and $ t_{j'}$ exclude each other.

	\subsubsection*{The statements exhaust the truth over $\Gamma$-extensions of $W$:}
	
	Let $W\left[G\right]$ be some $\Gamma$-extension
	of $W$. If $W\left[G\right]\vDash\sup\left\{ n\mid b_{n}\text{ holds}\right\} =\omega$,
	then there is some $j$ such that $W\left[G\right]\vDash s_{j}$,
	and so $W\left[G\right]\vDash\Phi\left(w_{j}^{B}\right)$. Otherwise,
	the number of buttons pushed is finite, so there is some $j$ such
	that $W\left[G\right]\vDash t_{j}$, and there is also some specific
	subset of the buttons $\left\{ b_{-i}\mid i\in B\right\} $ which
	are pushed in $W\left[G\right]$, so there is some $C\con B$ such
	that $W\left[G\right]\vDash\Psi_{C}$, and together we get $W\left[G\right]\vDash\Phi\left(w_{j}^{C}\right)$.
	
	\subsubsection*{$W$ satisfies $\Phi\left(w_{0}^{\protect\emp}\right)$:}
	
	In $W$ we have $\neg b_{i}$ for all $m-1\leq i<\omega$, so $W\vDash\Psi_{\emp}$ and also $W\vDash R_{0}$, and therefore	also $W\vDash s_{0}$.
	
	\subsubsection*{The statements correspond to the relation:}
	
	Assume we are in $U$ which is a $\Gamma$-extension
	of $W$ where $\Phi\left(w_{j}^{C}\right)$ is true.
	
	Assume first that $C\ne B$.
	\begin{itemize}
		\item Assume $\pos\Phi\left(w_{j'}^{C'}\right)$ -- there is a
		$\Gamma$-extension $U'$ of $U$ satisfying $\Phi\left(w_{j'}^{C'}\right)$.
		If $C'\ne B$ then 
		\[
		U'\vDash\Psi_{C'}=\bigwedge_{i\in C'}b_{-i}\land\bigwedge_{i\notin C'}\neg b_{-i}.
		\]
		But $U\vDash\bigwedge_{i\in C}b_{-i}$, which are buttons, so
		they remain pushed in $U'$, i.e. $U'\vDash\bigwedge_{i\in C}b_{-i}$.
		So we must get $C\con C'$, so $w_{j}^{C}\leq w_{j'}^{C'}$. If $C'=B$
		then clearly we have $w_{j}^{C}\leq w_{j'}^{C'}$.
		\item Assume $w_{j}^{C}\leq w_{j'}^{C'}$, hence $C\con C'$. We have 
		\[
		U\vDash\Psi_{C}=\bigwedge_{i\in C}b_{-i}\land\bigwedge_{i\notin C}\neg b_{-i},
		\]
		so for every $i\in C'\smin C$, by the independence of the buttons we can force $b_{-i}$, to obtain	an extension $U'$ satisfying $\Psi_{C'}$ (the buttons from $C$	will remain pushed). In $U$, which satisfies $ t_{j}$, there
		is some $k$ such that $k\bmod n=j$ and $U\vDash R_{k}$. In $U'$
		we still have $R_{k}$, since pushing the (finitely many) buttons corresponding to $C'$ does not push any button $b_{i}$ for $i>0$. If $j'=j$ we are done, otherwise we can find some $k'>k$, such that $k'\bmod n=j'$, push $b_{k'}$
		and thus obtain an extension $U''$ satisfying $ t_{j'}$. Again
		this forcing does not affect the truth of $b_{-i}$'s for $i\in B$, so $U''$ also satisfies $\Psi_{C'}$, so it satisfies $\Phi\left(w_{j'}^{C'}\right)$. By
		transitivity of $\Gamma$, we get that indeed $U\vDash\pos\Phi\left(w_{j'}^{C'}\right)$.
	\end{itemize}
	Now assume $C=B$, i.e. $U\vDash\Phi\left(w_{j}^{B}\right)$. We distinguish
	the two cases. 
	\begin{itemize}
		\item $U\vDash\left(\Psi_{B}\land t_{j}\right)$:
		
		\begin{itemize}
			\item Assume $\pos\Phi\left(w_{j'}^{C'}\right)$. Since $U\vDash\Psi_{B}$,
			any extension of it also satisfies $\Psi_{B}$, so we cannot have
			$\pos\Phi\left(w_{j'}^{C'}\right)$ for any $C'\ne B$. Therefore
			$C'=B$ and indeed $w_{j}^{B}\leq w_{j'}^{B}=w_{j'}^{C'}$.
			\item Assume $w_{j}^{C}\leq w_{j'}^{C'}$. So $C'=B$ as well. $U\vDash t_{j}$,
			so as we have seen before, we can force over $U$ to obtain a generic
			extension satisfying $ t_{j'}$. This extension will still satisfy
			$\Psi_{B}$ since these are buttons, so it will satisfy $\Phi\left(w_{j'}^{B}\right)$ as required. 
		\end{itemize}
		\item $U\vDash\left(\sup\{ n\mid b_{n} \text{ holds}\} =\omega\right)\land s_{j}$:
		
		\begin{itemize}
			\item Assume $\pos\Phi\left(w_{j'}^{C'}\right)$. Since $U\vDash\sup\left\{ n\mid b_{n}\text{ holds}\right\} =\omega$, any extension of $U$ also satisfies this, since these are buttons, so we cannot have $\pos\Phi\left(w_{j'}^{C'}\right)$
			for any $C'\ne B$. Therefore $C'=B$ and indeed $w_{j}^{B}\leq w_{j'}^{C'}$.
			\item Assume $w_{j}^{C}\leq w_{j'}^{C'}$. So $C'=B$ as well. $U\vDash s_{j}$,
			so by the definition of an $n$-switch, we can force over $U$ to obtain a generic	extension satisfying $ s_{j'}$. This extension will still satisfy
			$\sup\left\{ n\mid b_{n} \text{ holds}\right\} =\omega$ since
			these are buttons, so it will satisfy $\Phi\left(w_{j'}^{B}\right)$
			as required. 
		\end{itemize}
	\end{itemize}
	Hence we have defined a $\Gamma$ labeling for the frame	$\left\langle F,\leq\right\rangle $ over $W$. This was for every pBA, so by corollary \ref{cor:S4.2 upper bound},  $\mathsf{MLF}\left(\Gamma\right)\con\mathsf{S4.2}$.
\end{proof}

We end this section by citing another theorem of this sort, which we will use in section \ref{related classes}:
\begin{thm}[{\cite[theorem 12]{StC}}]\label{thm:ratchet->S4.3}If there is a long ratchet over a model of set theory $ W $, i.e. a strong ratchet $ \langle r(\alpha) | 0<\alpha \in \mathrm{Ord}\rangle $, where $r(\alpha)$ is obtained by a single formula with parameter $\alpha$, then $ \mathsf{MLF}(\Gamma)\con\mathsf{S4.3} $.
\end{thm}

\section{\label{sec:s-centered}$\sigma$-centered forcing }
We now proceed to the investigation of the modal logic of a specific
class of forcing notions -- the class of all $\sigma$-centered forcing
notions.
\begin{defn}
Let $\p$ be any poset.

\begin{enumerate}
\item A subset $C\con\p$ is called \emph{centered} if any finite number
of elements in $C$ have a common extension in $\p$.
\item A poset is called $\sigma$\emph{-centered} if it is the union of
countably many centered subsets.
\end{enumerate}
\end{defn}
\begin{rem}
\label{rem:For-convenience}For convenience we will always assume
that the top element $\one_{\p}$ is in each of the centered posets.
This does not affect the generality since every element is compatible
with it. It will also sometimes be convenient to assume that if $\p=\bigcup_{n\in\omega}\p_{n}$
where each $\p_{n}$ is centered, then each $\p_{n}$ is upward closed,
i.e. if $q\in\p_{n}$ and $q\leq p$ then $p\in\p_{n}$. This also
doesn't affect the generality since if $q_{1}\til q_{k}\in\p_{n}$
and $q_{i}\leq p_{i}$ then a common extension for the $q_{i}$'s
will also extend the $p_{i}$'s . 
\end{rem}
The following is a central example for a $\sigma$-centered forcing,
versions of which will be used later on:
\begin{defn} \label{def:ad-forcing}
Let $Y$ be a subset of $\mathcal{P}\left(\omega\right)$. We define
a poset $\p_{Y}$ as follows:

\begin{itemize}
\item The elements are of the form $\left\langle s,t\right\rangle $ where
$s$ is a finite subset of $\omega$ and $t$ a finite subset of $Y$;
\item $\left\langle s,t\right\rangle $ is extended by $\left\langle s',t'\right\rangle $
if $s\con s'$, $t\con t'$ and for every $A\in t$, $s\cap A=s'\cap A$.
\end{itemize}
\end{defn}
So we think of the first component as finite approximations for a
generic real $x\con\omega$, while the second component limits our
options in extending the approximation. A condition $p=\left\langle s,t\right\rangle $
tells us that $s\con x$ and that for every $A\in t$, $x\cap A=s\cap A$,
so that the intersection of $x$ with any set in $Y$ will turn out
to be finite.
\begin{lem}
For any $Y\con\mathcal{P}\left(\omega\right)$, $\p_{Y}$ is $\sigma$-centered. 
\end{lem}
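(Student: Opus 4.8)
The plan is to partition $\p_{Y}$ into countably many pieces, one for each possible first coordinate, and observe that each piece is centered. For a finite $s\con\omega$, let
$C_{s}=\left\{ \left\langle s,t\right\rangle \mid t\text{ a finite subset of }Y\right\}$
be the set of conditions whose first coordinate is exactly $s$. Every condition of $\p_{Y}$ lies in exactly one $C_{s}$, so $\p_{Y}=\bigcup\left\{ C_{s}\mid s\con\omega\text{ finite}\right\}$, and since there are only countably many finite subsets of $\omega$, this is a countable union. It therefore suffices to show each $C_{s}$ is centered.

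So fix $s$ and take finitely many conditions $\left\langle s,t_{1}\right\rangle \til\left\langle s,t_{k}\right\rangle \in C_{s}$ — the point being that they all share the same first coordinate. I claim $\left\langle s,t_{1}\cup\dots\cup t_{k}\right\rangle$ is a common extension: it is a legitimate condition, since a finite union of finite subsets of $Y$ is again a finite subset of $Y$; and it extends each $\left\langle s,t_{i}\right\rangle$ because the first coordinates coincide (so the requirement $s\con s$, and $s\cap A=s\cap A$ for every $A\in t_{i}$, are trivially met) while $t_{i}\con t_{1}\cup\dots\cup t_{k}$. Hence every finite subset of $C_{s}$ has a common extension in $\p_{Y}$, so $C_{s}$ is centered, and $\p_{Y}$ is $\sigma$-centered.

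There is essentially no obstacle here: the only thing worth noting is that the compatibility constraint ``$s\cap A=s'\cap A$ for $A\in t$'' becomes vacuous once the two conditions have the same first coordinate, which is exactly why conditions with equal first coordinates are pairwise (indeed finitely) compatible via the pointwise union of their side conditions. This is the standard argument that a forcing built from finite approximations together with finitely much ``side information'' is $\sigma$-centered.
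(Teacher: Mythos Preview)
Your proof is correct and essentially identical to the paper's: you partition $\p_{Y}$ by first coordinate into the sets $C_{s}$ (the paper calls them $\p_{s}$) and observe that $\left\langle s,t_{1}\cup\dots\cup t_{k}\right\rangle$ is a common extension of any finitely many conditions sharing first coordinate $s$. The paper's argument is slightly terser, but the decomposition and the key observation are the same.
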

\begin{proof}
Note that if $t_{1}\til t_{n}$ are finite subsets of $Y$, then for
any $s\in\left[\omega\right]^{<\omega}$, the conditions $\left\langle s,t_{1}\right\rangle \til\left\langle s,t_{n}\right\rangle $
are all extended by $\left\langle s,t_{1}\cup\dots\cup t_{n}\right\rangle $.
 So $\p$ is the union of the centered posets
$\p_{s}=\left\{ \left\langle s,t\right\rangle \mid t\con Y\,\mathrm{finite}\right\} $.
Since there are only countably many finite subsets of $\omega$, we
get that $\p$ is $\sigma$-centered. 
\end{proof}
We will explore the properties of this kind of posets in section \ref{sec:Almost-disjoint-forcing}. The following lemma lists a few well-known properties of $\sigma$-centered forcing.

\begin{lem}\label{s.c.properties}
	\begin{enumerate}
		\item Every $\sigma$-centered poset has the c.c.c. and thus preserves cardinals and cofinalities.	
	
		\item \label{lem:continuum} Assume $\lambda\geq\aleph_{0}$, $2^{\lambda}=\kappa$ and let $\p$ be some $\sigma$-centered forcing notion. Then $V^{\p}\vDash2^{\lambda}=\kappa$. 
		
		\item \label{lem:Productivity}Let $\left\langle \p_{\alpha}\mid\alpha<\lambda\right\rangle $
		for some $\lambda<\left(2^{\aleph_{0}}\right)^{+}$ be a collection
		of $\sigma$-centered posets. Let $\p=\prod_{\alpha<\lambda}\p_{\alpha}$
		be the finite support product \footnote{All products in this paper are of finite support} of $\left\langle \p_{\alpha}\mid\alpha<\lambda\right\rangle $.
		Then $\p$ is also $\sigma$-centered.
		
		\item \label{lem:transitivity}If~ $\p$ is a $\sigma$-centered posets
			and~ $\dot{\q}$ is a $\p$-name such that $\p$ forces that $\dot{\q}$
			is a $\sigma$-centered posets, then also $\p*\dot{\q}$ is $\sigma$-centered.
	\end{enumerate}
\end{lem}

Note that (\ref{lem:transitivity}) shows that $\Gamma_{\sigma\text{-centered}}$,
the class of all $\sigma$-centered forcing notions, is transitive. It is also reflexive since the trivial forcing is trivially $\sigma$-centered, and persistent since being the union of countably many centered subsets is an upward absolute notion. So, using theorem \ref{thm:lower bounds},
we have the following:
\begin{thm}
\label{thm:contained in}The $\mathrm{ZFC}$-provable principles of
$\sigma$-centered forcing contain $\mathsf{S4.2}$.
\end{thm}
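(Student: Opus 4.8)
The plan is to verify that $\Gamma_{\sigma\text{-centered}}$ satisfies the three structural hypotheses of Theorem~\ref{thm:lower bounds} — reflexivity, transitivity, and directedness — and then simply invoke that theorem. Most of the work has in fact already been done above, so the proof will be short and will mostly consist of assembling the pieces.

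First I would note reflexivity: the trivial poset is the union of the single centered set $\{\one\}$, hence is $\sigma$-centered, so $\Gamma_{\sigma\text{-centered}}$ contains the trivial forcing. Next, transitivity is exactly Lemma~\ref{lem:transitivity}: if $\p$ is $\sigma$-centered and $\Vdash_{\p}\dot{\q}$ is $\sigma$-centered, then $\p*\dot{\q}$ is $\sigma$-centered, so the class is closed under two-step iterations. For persistence, I would observe that the statement ``$\p=\bigcup_{n\in\omega}\p_n$ with each $\p_n$ centered'' is upward absolute between models of ZFC sharing the same ordinals: a witnessing sequence of centered sets in $V$, together with the witnessing finite common extensions, remains a witness in any forcing extension, since being a common extension of finitely many conditions is absolute. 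Hence if $\q\in\Gamma_{\sigma\text{-centered}}$ in $V$ then $\q\in\Gamma_{\sigma\text{-centered}}$ in $V^{\p}$ as well.

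Having transitivity and persistence in hand, directedness follows by the remark recorded after the definition of these properties: given $\p,\q\in\Gamma_{\sigma\text{-centered}}$, take $\mathbb{R}=\p\times\q$. By persistence $\q$ remains $\sigma$-centered in $V^{\p}$, so by transitivity $\p\times\q\cong\p*\check{\q}$ is $\sigma$-centered and lies in the class; it is clearly forcing equivalent to both $\p*\dot{\mathbb{S}}$ and $\q*\dot{\mathbb{T}}$ with the appropriate tails. (Alternatively one can cite Lemma~\ref{lem:Productivity} directly for the product being $\sigma$-centered.) With $\Gamma_{\sigma\text{-centered}}$ now known to be reflexive, transitive and directed, Theorem~\ref{thm:lower bounds} yields $\mathsf{MLF}(\Gamma_{\sigma\text{-centered}})\supseteq\mathsf{S4.2}$, which is the claim.

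I do not expect any genuine obstacle here — this is the ``easy half'' of the main theorem, and every ingredient has already been established. The only point requiring a word of care is the persistence argument, i.e.\ being precise that the witnessing data (the countable family of centered sets and the finite common extensions) transfers upward; but this is routine absoluteness and presents no real difficulty. The substantive content of the paper lies in the reverse inclusion, proved later via labelings.
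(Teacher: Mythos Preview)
Your proposal is correct and follows essentially the same route as the paper: verify that the class of $\sigma$-centered forcings is reflexive (trivial forcing), transitive (Lemma~\ref{lem:transitivity}), and persistent (upward absoluteness of the witnessing data), hence directed, and then invoke Theorem~\ref{thm:lower bounds}. The paper records this in a single sentence immediately preceding the theorem, but the content is the same as what you spell out.
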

Finally, it will be of use to know that essentially, $\sigma$-centered
forcing notions are ``small'', so there aren't too many of them:
\begin{lem}
\label{lem:small}Let $\p$ be a $\sigma$-centered forcing notion.
Then the separative quotient of~ $\p$ is of size at most $2^{\aleph_{0}}$.
\end{lem}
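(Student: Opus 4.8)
The plan is to attach to each condition $p\in\p$ a subset of $\omega$ — a ``fingerprint'' — in such a way that two conditions with the same fingerprint are identified in the separative quotient; since there are only $2^{\aleph_0}$ subsets of $\omega$, this at once gives the bound. Recall that the separative quotient of $\p$ is $\p/\!\approx$, where $p\leq^{*}q$ abbreviates ``every $r\leq p$ is compatible with $q$'' and $p\approx q$ iff $p\leq^{*}q\leq^{*}p$.

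First I would fix a decomposition $\p=\bigcup_{n<\omega}\p_{n}$ with each $\p_{n}$ centered, and then, using the second half of remark \ref{rem:For-convenience}, assume in addition that each $\p_{n}$ is upward closed: replacing $\p_{n}$ by its upward closure leaves it centered (a common extension of conditions $q_{i}\leq p_{i}$ with $q_{i}\in\p_{n}$ also extends the $p_{i}$) and does not change the union. For $p\in\p$ set $L(p)=\{\,n<\omega\mid p\in\p_{n}\,\}$.

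Second I would prove the key claim: if $L(p)=L(p')$ then $p\approx p'$. By symmetry it suffices to show $p\leq^{*}p'$. Let $q\leq p$ be arbitrary. Since the $\p_{n}$ cover $\p$, choose $n$ with $q\in\p_{n}$; as $\p_{n}$ is upward closed and $q\leq p$, also $p\in\p_{n}$, so $n\in L(p)=L(p')$ and hence $p'\in\p_{n}$. But then $q$ and $p'$ both lie in the centered set $\p_{n}$, so they are compatible. This establishes $p\leq^{*}p'$, and the claim follows.

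Finally I would conclude: the map $p\mapsto L(p)$ has each of its fibers contained in a single $\approx$-class, so the number of $\approx$-classes is at most the number of distinct values $L(p)$, which is at most $|\mathcal{P}(\omega)|=2^{\aleph_0}$. I do not expect a genuine obstacle; the only point needing care is the reduction to upward-closed pieces $\p_{n}$ (without which the compatibility step fails) and keeping straight that it is \emph{equality of fingerprints} that must force equivalence, not the converse — both handled above.
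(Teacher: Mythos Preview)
Your proposal is correct and essentially identical to the paper's own argument: the paper defines the same fingerprint $A(x)=\{n\mid x\in\p_{n}\}$ (after passing to upward-closed pieces), proves the contrapositive $[x]\neq[y]\Rightarrow A(x)\neq A(y)$ by the same centeredness-plus-upward-closure step, and concludes the same bound. The only cosmetic difference is that you argue directly that equal fingerprints force $p\leq^{*}p'$, whereas the paper starts from a witness to $[x]\neq[y]$ and produces a separating index; the logical content is the same.
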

\begin{proof}
Let $\p=\bigcup_{n\in\omega}\p_{n}$ where each $\p_{n}$
is upward closed. Recall that the separative quotient is the quotient of $\p$ by the equivalence relation: $x,y\in\p$, $x\sim y$
iff $\left\{ z\in\p\mid z\parallel x\right\} =\left\{ z\in\p\mid z\parallel y\right\} $.
We denote the equivalence class of $x$ by $\left[x\right]$. Define
for every $x\in\p$ 
\[
A\left(x\right)=\left\{ n\in\omega\mid x\in\p_{n}\right\} .
\]
 We claim that for every $x,y\in\p$, $\left[x\right]\ne\left[y\right]$
implies $A\left(x\right)\ne A\left(y\right)$. $\left[x\right]\ne\left[y\right]$
means that (without loss of generality) there is some $z\parallel x$ such that $z\bot y$.
Let $z'$ be a common extension of $z$ and $x$. So $z'\bot y$ as
well (otherwise $z$ would be compatible with $y$). Let $n\in\omega$
such that $z'\in\p_{n}$. Since we assumed $\p_{n}$ is upward closed,
also $x\in\p_{n}$, so $n\in A\left(x\right)$. Assume towards contradiction
that $n\in A\left(y\right)$, i.e. $y\in\p_{n}$. But $\p_{n}$ is
centered, so $y$ and $z'$ must be compatible, which is a contradiction.

So we get that 
$
\left|\p/\!\sim\right|\leq\left|\left\{ A\left(x\right)\mid x\in\p\right\} \right|\leq\left|\mathcal{P}\left(\omega\right)\right|=2^{\aleph_{0}}
$.
\end{proof}
\begin{cor}
\label{cor:few forcings}Up to forcing-equivalence, there are at most
$2^{2^{\aleph_{0}}}$ $\sigma$-centered forcing notions. 
\end{cor}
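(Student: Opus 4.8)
The plan is a straightforward counting argument resting on Lemma~\ref{lem:small}. First I would recall the standard fact that every poset $\p$ is forcing-equivalent to its separative quotient $\p/\!\sim$ (they have the same Boolean completion, the separative quotient being dense in it). Hence the map sending a $\sigma$-centered forcing notion to the isomorphism type of its separative quotient factors through forcing-equivalence, and it suffices to bound the number of isomorphism types that arise.

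By Lemma~\ref{lem:small}, if $\p$ is $\sigma$-centered then $|\p/\!\sim|\leq 2^{\aleph_0}$. So every such separative quotient is isomorphic to a poset whose underlying set is an ordinal $\kappa\leq 2^{\aleph_0}$, with order relation some subset of $\kappa\times\kappa$. The number of such ordinals is at most $(2^{\aleph_0})^+\leq 2^{2^{\aleph_0}}$, and for each the number of candidate relations is at most $|\mathcal{P}(\kappa\times\kappa)|=2^{|\kappa\times\kappa|}=2^{2^{\aleph_0}}$, since $2^{\aleph_0}$ is infinite. Multiplying, we get at most $2^{2^{\aleph_0}}$ isomorphism types of posets of size $\leq 2^{\aleph_0}$, hence at most $2^{2^{\aleph_0}}$ isomorphism types of separative quotients of $\sigma$-centered forcings.

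Since forcings with isomorphic separative quotients are forcing-equivalent, this bounds the number of forcing-equivalence classes of $\sigma$-centered forcing notions by $2^{2^{\aleph_0}}$, as claimed.

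I do not expect any genuine obstacle here; the only points requiring a word of care are (i) invoking forcing-equivalence via the separative quotient rather than attempting to characterize forcing-equivalence directly (for an upper bound, ``isomorphic separative quotients $\Rightarrow$ forcing-equivalent'' is all that is needed), and (ii) the cardinal arithmetic $(2^{\aleph_0})^+\cdot 2^{2^{\aleph_0}}=2^{2^{\aleph_0}}$, which is immediate since $2^{\aleph_0}$ is infinite.
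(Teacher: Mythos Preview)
Your proof is correct and follows essentially the same route as the paper: reduce to separative quotients via forcing-equivalence, invoke Lemma~\ref{lem:small} to bound their size by $2^{\aleph_0}$, and count. The paper's proof is terser, simply asserting that there are at most $2^{2^{\aleph_0}}$ such quotients without spelling out the cardinal arithmetic you provide.
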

\begin{proof}
Every poset is forcing equivalent to it's separative quotient

 and by the previous lemma there are at most $2^{2^{\aleph_{0}}}$ of those.
\end{proof}

\subsection{\label{sec:Almost-disjoint-forcing}Almost disjoint forcing}

In this section we present one of the tools for labeling frames
with $\sigma$-centered forcing -- almost disjoint forcing, which is
a version of the example introduced in the previous section. The results
in this section are due to Jensen and Solovay in \cite{JS}.

Two infinite sets are called \emph{almost disjoint} (a.d.) if their intersection
is finite. We would like to have a way to construct almost disjoint subsets of
$\omega$ in a very definable and absolute way. For that, we fix some
recursive enumeration $\frakt=\left\langle \frakt_{i}\mid i<\omega\right\rangle $ of all
finite sequences of $\omega$ (which we will also use later on, note that $\frakt\in L$ and is absolute), and define for every $f:\omega\to\omega$
\[
\mathcal{S}\left(f\right)=\left\{ i<\omega\mid \frakt_{i}\mathrm{\:is\:an\:initial\:segment\:of\:}f\right\} .
\]
If $f,g$ are distinct then $\mathcal{S}\left(f\right)$
and $\mathcal{S}\left(g\right)$ are almost disjoint.
Hence, $\left\{ \s\left(f\right)\mid f:\omega\to\omega\right\} $ is
a family of $2^{\aleph_{0}}$ pairwise a.d. subsets of~ $\omega$.

From the discussion at section 2.4 of \cite{JS} we have the following:
\begin{thm}
	Let $\mathcal{F}\in M$ be a family of a.d.
	subsets of~ $\omega$, $Y\con\mathcal{F}$ (in $M$), and $\p_{Y}$ the forcing from definition \ref{def:ad-forcing}. Then forcing with $\p_{Y}$
	adds a real $x$ such that for every $y\in\mathcal{F}$, $x\cap y$
	is finite iff~ $y\in Y$.
\end{thm}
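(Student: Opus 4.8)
The plan is to analyze the generic object added by $\p_Y$ directly. Let $G$ be $\p_Y$-generic over $M$, and set
\[
x = \bigcup\left\{ s \mid \exists t\; \left\langle s,t\right\rangle \in G \right\}.
\]
First I would check by a density argument that $x$ is an infinite subset of $\omega$: for each $n$, the set of conditions $\left\langle s,t\right\rangle$ with $s \smin n \ne \emp$ is dense, since given any $\left\langle s,t\right\rangle$ we can pick $m \in \omega \smin n$ avoiding the finitely many sets in $t$ whose intersection with $s$ we must preserve — there are only finitely many such $A \in t$, each with $A \cap s$ fixed, but we are free to choose $m$ outside $\bigcup t$, which is still cofinite, so such $m$ exists and $\left\langle s \cup \{m\}, t\right\rangle$ extends the given condition. (Here the almost-disjointness of $\mathcal{F}$ is not even needed, only that each $A \in t$ is a subset of $\omega$ and $t$ is finite.)

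Next I would show that for $y \in Y$, $x \cap y$ is finite. The key observation is that once some condition $\left\langle s,t\right\rangle \in G$ has $y \in t$, every further condition $\left\langle s',t'\right\rangle \leq \left\langle s,t\right\rangle$ satisfies $s' \cap y = s \cap y$, so $x \cap y = s \cap y$, which is finite. Thus it suffices to prove that $D_y = \left\{ \left\langle s,t\right\rangle \mid y \in t \right\}$ is dense below $\one$; but this is immediate, since $\left\langle s, t \cup \{y\}\right\rangle$ extends $\left\langle s,t\right\rangle$ and lies in $D_y$. By genericity $G \cap D_y \ne \emp$, so $x \cap y$ is finite.

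The main obstacle — and the one place the almost-disjointness hypothesis genuinely enters — is the converse: if $y \in \mathcal{F} \smin Y$, then $x \cap y$ is infinite. Here I would fix $y \in \mathcal{F} \smin Y$ and $k \in \omega$, and argue that
\[
E_{y,k} = \left\{ \left\langle s,t\right\rangle \mid \left| s \cap y \right| \geq k \right\}
\]
is dense. Given $\left\langle s,t\right\rangle$, I want to add to $s$ some point of $y$ without violating the constraints imposed by $t$; the constraint is that for each $A \in t$, I must not add any element of $A$. So I need $y \smin \bigcup t$ to be infinite. But $t$ is a finite subset of $Y$, hence a finite subset of the almost disjoint family $\mathcal{F}$, and $y \in \mathcal{F}$ with $y \notin Y \supseteq$-ish… more precisely $y \ne A$ for every $A \in t$ (since $y \notin Y$ but $t \con Y$), so $y \cap A$ is finite for each of the finitely many $A \in t$, whence $y \cap \bigcup t$ is finite and $y \smin \bigcup t$ is infinite. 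So I can repeatedly pick elements of $y \smin \bigcup t$ to enlarge $s$ until $\left|s \cap y\right| \geq k$, keeping $t$ fixed; the resulting condition is in $E_{y,k}$ and extends $\left\langle s,t\right\rangle$. By genericity $x \cap y$ meets every such $E_{y,k}$, so $\left|x \cap y\right| \geq k$ for all $k$, i.e. $x \cap y$ is infinite. Combining the two directions, $x$ is the desired real, and since all of this is forced by $\one_{\p_Y}$ (the density arguments are uniform), it holds in every generic extension.
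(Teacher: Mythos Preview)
The paper does not give its own proof of this theorem; it simply cites Jensen--Solovay \cite{JS} (``From the discussion at section 2.4 of \cite{JS} we obtain the following''). Your argument is the standard one and is correct on the two points that matter: the density of $D_y$ for $y\in Y$ gives $x\cap y$ finite, and the density of $E_{y,k}$ for $y\in\mathcal{F}\smin Y$ --- using almost disjointness to see that $y\smin\bigcup t$ is infinite --- gives $x\cap y$ infinite.

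One small correction: your opening paragraph claiming that $x$ is infinite is both unnecessary (the theorem does not assert it) and, as written, wrong. You say we may choose $m\notin\bigcup t$ because the complement of $\bigcup t$ ``is still cofinite'', but the members of $t$ are \emph{infinite} subsets of $\omega$, and a finite union of members of an a.d.\ family can certainly be cofinite or even all of $\omega$ (e.g.\ the evens and the odds). Indeed, if $Y=\mathcal{F}=\{\text{evens},\text{odds}\}$ the generic $x$ must be finite. Just drop that paragraph; the rest stands.
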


So $\p_{Y}$ adds a generic real $x$ which is almost-disjoint from each member of $Y$. Furthermore, if $x$ is obtained by the generic filter $G$, then clearly $M[G]=M[x]$.
This gives us a method to code subsets of $2^{\omega}$ using subsets
of $\omega$. Let $M$ be some model of $\mathrm{ZFC}$, set 
 an enumeration $\left\{ f_{\alpha}\mid\alpha<\kappa\right\} \in M$
of $\omega^{\omega}$ (where $\kappa=\left(2^{\omega}\right)^{M}$),
and define as before $\mathcal{F}=\left\{\mathcal{S}\left(f_{\alpha}\right)\mid\alpha<\kappa\right\} $ which are a.d.
So for each $A\con\kappa$, $A\in M$, we can define $Y=Y\left(A\right)=\left\{ \mathcal{S}\left(f_{\alpha}\right)\mid\alpha\in A\right\} $,
and force with $\p_{Y}$ to obtain a generic real $x=x_{A}$, and by
the previous theorem, $\alpha\in A$ iff $\mathcal{S}\left(f_{\alpha}\right)\in Y$
iff $x\cap\mathcal{S}\left(f_{\alpha}\right)$ is finite. So, in $M\left[x\right]$,
we get that 
\[
A=\left\{ \alpha<\kappa\mid\mathcal{S}\left(f_{\alpha}\right)\cap x\;\mathrm{is\,finite}\right\} 
\]
(note that $\p_{Y}$ preserves both cardinals and the continuum, so
$\kappa=\left(2^{\omega}\right)^{M\left[x\right]}$ , and if $\kappa=\aleph_{\alpha}^{M}$
for some $\alpha$, then also $\kappa=\aleph_{\alpha}^{M\left[x\right]}$). In this case, we say that ``$x$ codes $A$''. 

\section{\label{sec:Labeling-frames} Control statements for $\sigma$-centered forcing}

Our goal in this section is to prove that the modal logic of
$\sigma$-centered forcing is contained in $\mathsf{S4.2}$ by producing control statements for $\sigma$-centered forcing that will meet the requirements of theorem \ref{main-labeling}. We begin by describing a specific model $W$ which will be our ground
model, and then construct the independent family of buttons and the $n$-switches required in the theorem.

\subsection{\label{subsec:ground-model}The ground model}

We begin with the constructible universe $L$, and use Cohen forcing to obtain mutually generic reals
$\left\langle a_{\alpha,i}\mid\alpha<\omega_{1}^{L},i<\omega\right\rangle $
i.e. each $a_{\beta,j}$ is generic over $L\left[\left\langle a_{\alpha,i}\mid\alpha<\omega_{1}^{L},i<\omega,\left(\alpha,i\right)\ne\left(\beta,j\right)\right\rangle \right]$. Let 
\[Z=L\left[\left\langle a_{\alpha,i}\mid\alpha<\omega_{1}^{L},i<\omega\right\rangle \right].\]
Our ground model $ W $ is a generic extension of $ Z $, which preserves the mutual genericity of $ \left\langle a_{\alpha,i}\mid\alpha<\omega_{1}^{L},i<\omega\right\rangle $, such that these reals are ordinal-definable with a definition which is absolute for generic extensions of $ W $ by $ \sigma $-centered forcing. This can be done e.g. by using Easton forcing to code the reals in the power function above some large enough cardinal. Any extension of $ W $ for which the above definition is absolute will be called an \emph{appropriate extension}.
We will also require that in $ W $ we do not collapse cardinals and add no new subsets below $ \aleph_\omega $, so e.g. $\omega_{1}^{L}=\omega_{1}^{W}$. From now on we'll
deal with forcings which do not collapse cardinals (by c.c.c), and
also do not change the continuum (by lemma \ref{s.c.properties}(\ref{lem:continuum})),
so we omit such superscripts.

\subsection{\label{subsec:Buttons}The buttons}

Now over $W$ we can define $T_{i}$, for $i<\omega$ as the statement:
\begin{gather*} \label{T-predicates}
\text{For every real $x$ and for all but boundedly many } \alpha<\omega_{1}, \\
a_{\alpha,i}\text{ is Cohen generic over } L\left[x,\left\langle a_{\beta,j}\mid\beta<\omega_{1},j\ne i\right\rangle \right]. \nonumber
\end{gather*}
Since the reals $a_{\alpha,i}$ and the sequences $\left\langle a_{\beta,j}\mid\beta<\omega_{1},j\ne i\right\rangle $
are ordinal definable in $W$ and it's appropriate extensions, also $L\left[x,\left\langle a_{\beta,j}\mid\beta<\omega_{1},j\ne i\right\rangle \right]$
is definable with $x$ as a parameter. So, formally, $T_{i}$
includes the definitions of these elements, which will be interpreted
as we expect in all relevant models. The question whether a real $r$
is generic over some definable submodel is also expressible in the
language of set theory, as it just means that $r$ is in every open
dense subset of $\omega^{\omega}$ which is in that model. So $T_{i}$
is indeed a sentence in the language of set theory. Note that if we
want, by slight abuse of notation we can treat $i$ as a ``variable''
denoting a natural number, rather than a definable term; thus we would
be able to phrase sentences such as $\forall i<\omega\,T_{i}$. This
will be used in the next section. In this section when we talk about
a specific $T_{i}$, we take $i$ to be a fixed term.
\begin{rem}

\begin{enumerate}
\item $W\vDash T_{i}$ for every $i$: We required that we do not add any new subsets of $\omega^{\omega}$ or any new real.
So every real $x\in W$ is already in $Z$.
Fix some $i<\omega$ and a real $x\in W$. This real was introduced
by at most boundedly many $a_{\alpha,i}$, that is, there is some
$\gamma<\omega_{1}$ such that 
\[
x\in L\left[\left\langle a_{\alpha,j}\mid\alpha<\omega_{1},j\ne i\right\rangle \cup\left\langle a_{\alpha,i}\mid\alpha<\gamma\right\rangle \right].
\]
 All the reals $a_{\alpha,i}$ for $\alpha>\gamma$ are generic over
the above model so also above $L\left[x,\left\langle a_{\beta,j}\mid\beta<\omega_{1},j\ne i\right\rangle \right]$
. 
\item $\neg T_{i}$ is a pure button for appropriate extensions: if for
some $a_{\alpha,i}$ there is some real $x$ such that $a_{\alpha,i}$
is not generic over $L\left[x,\left\langle a_{\beta,j}\mid\beta<\omega_{1},j\ne i\right\rangle \right]$,
then it will never again be generic over this model. So, if we destroy
$T_{i}$, we can never get it back as long as it keeps it's above
meaning. Note that if an extension is not appropriate, then $T_{i}$
might have a completely different meaning than what is intended, as
the definitions we use will give some different sets, so it is paramount
we stick with appropriate extensions.
\end{enumerate}
\end{rem}
We will now define forcing notions which will allow us to destroy
$T_{i}$, by destroying the genericity of the relevant $a_{\alpha,i}$'s.
\begin{defn}
In $W$, we define $\p_{i}$ to be the forcing notion with conditions
of the form $\left\{ U_{s_{1}}\til U_{s_{n}},a_{\alpha_{1},i}\til a_{\alpha_{l},i}\right\} $
where $n,l<\omega$, $s_k\in \omega^{<\omega}$ and $U_{s_{k}}\con\omega^{\omega}$ is the basic open set $\{x\in\omega^{\omega}\mid s_k\init x\}$;
and for conditions $p,q\in \p_i$, $q\leq p$ iff $p\con q$ and whenever $a_{\alpha,i}\in p$ and $U_s\in q\smin p$, $a_{\alpha,i}\notin U_s$.
That is, to extend a
condition, we can add any finite number of the reals, and we can add
any finite number of basic open sets, as long as the new sets do not
include any of the old reals. 
\end{defn}
We will show that the forcing $\p_{i}$ destroys the genericity of
all the $a_{\alpha,i}$'s, by adding dense open sets (approximated
by the $U_{s}$'s) that do not include them. So, intuitively, a condition
$p=\left\{ U_{s_{1}}\til U_{s_{n}},a_{\alpha_{1},i}\til a_{\alpha_{l},i}\right\} $
states which reals will be avoided in subsequent stages.
\begin{rem}
\label{rem:extension} Given some distinct $a_{\alpha_{1},i}\til a_{\alpha_{l},i}$ and $s\in \omega^{<\omega}$
we can always find some $s'\tini s$ such that $a_{\alpha_{1},i}\til a_{\alpha_{l},i}\notin U_{s'}$:
let $t=\bigcap_{k=1}^{l}a_{\alpha_{k},i}$, i.e. the longest initial
segment common to $a_{\alpha_{1},i}\til a_{\alpha_{l},i}$. If $s\init t$ or $t\init s$, let $t'$ be the longer of the two and assume it is of length $n$. Take some $j\in\omega\smin\left\{ a_{\alpha_{1},i}\left(n\right)\til a_{\alpha_{l},i}\left(n\right)\right\} $ and set $s'=t'^{\frown}\left\langle j\right\rangle $, then $a_{\alpha_{1},i}\til a_{\alpha_{l},i}\notin U_{s'}$ and $s'\tini s$. Otherwise $s'=s$ will do. 
\end{rem}
Let $G\con\p_{i}$ be a generic filter. Note that by the former remark,
the set of conditions having at least $n$ basic-open sets in them
is dense in $\p_{i}$ (given a condition $p$, we can find an $s$
such that $U_{s}$ does not contain any of the reals in $p$, and
then add to $p$, e.g., $U_{s},U_{s^{\frown}\left\langle 0\right\rangle },U_{s^{\frown}\left\langle 0,0\right\rangle }...$
to obtain an extension with at least $n$ basic-open sets). So, the
conditions in $G$ give us an infinite sequence $\left\langle U_{s_{k}}\mid k<\omega\right\rangle $ of basic-open sets. We assume that $\left\langle s_{k}\mid k<\omega\right\rangle $ forms a subsequence of the recursive enumeration $\left\langle \frakt_{i}\mid i<\omega\right\rangle$, i.e. there are indexes $\left\langle n_{k}\mid k<\omega\right\rangle $ such that $s_{k}=\frakt_{n_k}$ for every $k$.
\begin{lem}
\label{lem:dense}For every $k<\omega$, the set  $\bigcup_{n\geq k}U_{s_{n}}$
is open-dense in $\omega^{\omega}$.
\end{lem}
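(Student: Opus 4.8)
The plan is to show that the sequence $\langle U_{s_n}\mid n<\omega\rangle$ of basic-open sets extracted from the generic filter $G$ has the property that, for each $k$, the union $\bigcup_{n\ge k}U_{s_n}$ is dense (it is automatically open, being a union of basic-open sets). Density means: given any $s\in\omega^{<\omega}$, the basic-open set $U_s$ meets $\bigcup_{n\ge k}U_{s_n}$, equivalently some $U_{s_m}$ with $m\ge k$ is ``compatible'' with $U_s$ in the sense that $U_s\cap U_{s_m}\ne\emptyset$ — which happens exactly when $s\init s_m$ or $s_m\init s$. So it suffices to show that for every $s\in\omega^{<\omega}$ and every $k<\omega$, the set $D_{s,k}$ of conditions $p\in\p_i$ that contain some $U_{s'}$ with $s'$ comparable to $s$ and with that $U_{s'}$ occupying ``position $\ge k$'' in the enumeration is dense; then genericity of $G$ finishes it. Since the enumeration $\langle U_{s_n}\rangle$ is built by listing the basic-open sets appearing across conditions of $G$, a cleaner way to phrase this is: for each $s$, the set $D_s$ of conditions containing some $U_{s'}$ with $s\init s'$ is dense, and moreover we can always arrange the new $U_{s'}$ to appear arbitrarily late.

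First I would fix a condition $p=\{U_{s_1},\dots,U_{s_n},a_{\alpha_1,i},\dots,a_{\alpha_l,i}\}\in\p_i$ and an $s\in\omega^{<\omega}$, and find an extension $q\le p$ with some $U_{s'}$, $s\init s'$, added. The only obstruction to adding a basic-open set to $p$ is the requirement that it avoid all the reals $a_{\alpha_1,i},\dots,a_{\alpha_l,i}$ already listed in $p$. So I need $s'\tini s$ such that none of the finitely many reals $a_{\alpha_j,i}$ lies in $U_{s'}$. This is where Remark \ref{rem:extension} does the work, essentially: among the finitely many reals $a_{\alpha_1,i},\dots,a_{\alpha_l,i}$, I look at which of them pass through $s$ (i.e. satisfy $s\init a_{\alpha_j,i}$). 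If none do, then $U_s$ itself already avoids all of them and I can take $s'=s$. If some do, then since these are finitely many \emph{distinct} reals all extending $s$, I can apply the idea of Remark \ref{rem:extension} just past the node $s$: take $t\tini s$ to be a common initial segment of these offending reals that is long enough to split off at least one of them from the others, then pick a value $j\in\omega$ not taken by any of them at level $\ell(t)$, and set $s'=t^\frown\langle j\rangle$. Then $s\init s'$ and no $a_{\alpha_j,i}\in U_{s'}$, so $q=p\cup\{U_{s'}\}$ is a legitimate condition extending $p$.

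Next, to handle the ``position $\ge k$'' part, I note (as already observed just before the lemma) that the set of conditions having at least $m$ basic-open sets is dense for every $m$: starting from $q$ above I can keep padding with $U_{s'}, U_{s'^\frown\langle 0\rangle},U_{s'^\frown\langle 0,0\rangle},\dots$, none of which contains any $a_{\alpha,i}$ that $U_{s'}$ already avoided (shrinking a basic-open set only removes points). So for any $k$ I can extend $q$ further to a condition with at least $k$ basic-open sets listed \emph{before} the moment $U_{s'}$ gets enumerated, or more simply: the $U_{s'}$ I added, together with all earlier $U_{s_n}$'s contributed by earlier conditions of $G$, guarantees that some comparable-to-$s$ basic-open set occurs with index $\ge k$ once I also force $G$ to meet the density sets that grow the list. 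Formally, I would define, for each $s$ and each $k$, the dense set $D_{s,k}=\{p\in\p_i: p$ lists at least $k$ basic-open sets, one of which is $U_{s'}$ with $s\init s'\}$, verify density by combining the two paragraphs above, and conclude by genericity that $G\cap D_{s,k}\ne\emptyset$, whence $\bigcup_{n\ge k}U_{s_n}$ meets $U_s$. As $s$ and $k$ were arbitrary, $\bigcup_{n\ge k}U_{s_n}$ is open-dense for every $k$.

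The only genuinely delicate point is the avoidance step — producing, past a prescribed node $s$, a basic-open subset missing a given finite set of reals — and that is exactly the content of Remark \ref{rem:extension}, applied relative to $s$ rather than at the root; everything else is routine density bookkeeping and an appeal to genericity of $G$. I expect no real obstacle beyond making sure the combinatorics of ``comparable to $s$'' versus ``extends $s$'' is stated consistently, and that the padding argument is phrased so that the late-index requirement and the comparability requirement are met by a single condition in $G$.
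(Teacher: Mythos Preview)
Your avoidance step (producing $s'\tini s$ missing the finitely many $a_{\alpha_j,i}$ in $p$) is correct and is exactly what the paper does. The gap is in how you handle the ``index $\geq k$'' part. Your formal dense set
\[
D_{s,k}=\{p\in\p_i : p\text{ lists at least }k\text{ basic-open sets, one of which is }U_{s'}\text{ with }s\init s'\}
\]
does not give the conclusion: a condition $p\in G\cap D_{s,k}$ may well contain a single $U_{s'}$ with $s\init s'$, and nothing prevents that $U_{s'}$ from being $s_0$ in the enumeration of $G$'s open sets. The other $k-1$ open sets in $p$ need have nothing to do with $s$, so you cannot place $U_{s'}$ past index $k$.

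Your padding discussion already contains the fix, you just did not feed it into the definition. If instead you require $p$ to contain at least $k+1$ distinct basic-open sets $U_{s'_0},\dots,U_{s'_k}$ \emph{each} extending $s$ (your chain $s',\,s'^\frown\langle 0\rangle,\,s'^\frown\langle 0,0\rangle,\dots$ does exactly this, and each still avoids all the $a_{\alpha_j,i}$), then by pigeonhole at least one of these has index $\geq k$ in the enumeration, and that one witnesses that $U_s$ meets $\bigcup_{n\geq k}U_{s_n}$.

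The paper avoids this counting altogether with a slicker device: given $k$, it sets $N=\max\{\ell(s_0),\dots,\ell(s_{k-1})\}$ and takes
\[
D_{s,N}=\{p\in\p_i : \exists t\tini s\ (\ell(t)>N\ \wedge\ U_t\in p)\}.
\]
Density is shown exactly via your avoidance-then-extend argument, just extending $s'$ further until its length exceeds $N$. Any $p\in G\cap D_{s,N}$ then contributes some $U_t$ with $t\tini s$ and $\ell(t)>N$; since $t$ is longer than each of $s_0,\dots,s_{k-1}$, it must be $s_n$ for some $n\geq k$. So length serves as a proxy for ``late index'', and no pigeonhole or multiplicity bookkeeping is needed.
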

\begin{proof}
It is clearly open as a union of open sets. To show it is dense, let $s\in\omega^{<\omega}$ and  we need to find some $n\geq k$ such that $s_n\tini s$.
Note that as in remark \ref{rem:extension}, for every $p\in \p_i $ we can extend $s$ to some $s'\tini s$ such that $p\cup \{U_t\}\leq p$, and we can also make sure that $|s'|>N$ for any fixed $N$. So by genericity there is some $p\in G$ containing some $U_{s'}$ where $s'\tini s$ and $|s'|>\max \{|s_l| \mid l<k\}$ so in particular $s'=s_n$ for $n\geq k$ as required.
\end{proof}
\begin{lem}
For every $\alpha<\omega_{1}$ there is some $k$ such that $a_{\alpha,i}\notin\bigcup_{n\geq k}U_{s_{n}}$.
\end{lem}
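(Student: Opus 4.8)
The plan is to exploit the defining feature of $\p_{i}$: once a real $a_{\alpha,i}$ enters a condition, every basic open set adjoined at a later stage must avoid it, so the generic filter $G$ can ``catch'' $a_{\alpha,i}$ only finitely often. Fix $\alpha<\omega_{1}$. First I would verify that $D_{\alpha}=\left\{ p\in\p_{i}\mid a_{\alpha,i}\in p\right\}$ is dense in $\p_{i}$: given any condition $p$, the finite set $p\cup\left\{ a_{\alpha,i}\right\}$ is again a condition and extends $p$, since the only clause in the ordering concerns basic open sets lying in the larger condition but not the smaller one, and here no basic open set has been added. Hence by genericity there is $p_{0}\in G$ with $a_{\alpha,i}\in p_{0}$.

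The crux is to show that the only basic open sets occurring in conditions of $G$ that contain $a_{\alpha,i}$ are among the (finitely many) basic open sets of $p_{0}$. So suppose $p'\in G$ and $U_{s}\in p'$ with $a_{\alpha,i}\in U_{s}$. As $G$ is a filter there is $p''\in G$ with $p''\leq p_{0}$ and $p''\leq p'$; then $p'\con p''$, so $U_{s}\in p''$, and if $U_{s}\notin p_{0}$ we would have $U_{s}\in p''\smin p_{0}$ while $a_{\alpha,i}\in p_{0}$, contradicting the definition of the order on $\p_{i}$. Therefore $U_{s}\in p_{0}$.

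To finish, note that every basic open set of $p_{0}$ appears in the sequence $\left\langle U_{s_{n}}\mid n<\omega\right\rangle$ (because $p_{0}\in G$), and $p_{0}$ has only finitely many of them; since that sequence enumerates the basic open sets of $\bigcup G$ without repetition, there is some $k<\omega$ with $U_{s_{n}}\notin p_{0}$ for all $n\geq k$. For such $n$ the set $U_{s_{n}}$ occurs in a condition of $G$ but is not in $p_{0}$, so by the previous paragraph $a_{\alpha,i}\notin U_{s_{n}}$. Consequently $a_{\alpha,i}\notin\bigcup_{n\geq k}U_{s_{n}}$, as required.

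I expect the second step to be the only delicate point: one must combine the filter (directedness) property of $G$ with the asymmetric constraint in the order of $\p_{i}$ so as to confine every ``offending'' basic open set to the single condition $p_{0}$. The density argument and the counting against the fixed enumeration are then routine bookkeeping.
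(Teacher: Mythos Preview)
Your argument is correct and essentially the same as the paper's. Both proofs use the density of $D_{\alpha}=\{p\in\p_{i}\mid a_{\alpha,i}\in p\}$ to obtain a condition $p_{0}\in G$ containing $a_{\alpha,i}$, then exploit directedness of $G$ together with the order constraint on $\p_{i}$ to conclude that any basic open set added beyond $p_{0}$ must avoid $a_{\alpha,i}$. The only cosmetic difference is that you first isolate the clean intermediate claim ``every $U_{s}$ appearing in $G$ with $a_{\alpha,i}\in U_{s}$ already lies in $p_{0}$'' and then choose $k$, whereas the paper fixes $k$ first and argues directly by contradiction; the underlying computation with a common extension $p''\leq p_{0},p'$ is identical.
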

\begin{proof}
Fix $\alpha<\omega_{1}$ and let $D_{\alpha}=\left\{ p\in\p_{i}\mid a_{\alpha,i}\in p\right\} $.
So $D_{\alpha}$ is clearly open, and it is dense since for every
$p$, $p\cup\left\{ a_{\alpha,i}\right\} $ is a legitimate extension
of $p$ (we did not limit the addition of $a_{\beta,i}$'s). So there
is some $p\in G\cap D_{\alpha}$. Let $k$ be larger than any $n$
such that $U_{s_{n}}\in p$. We want to show that $a_{\alpha,i}\notin\bigcup_{n\geq k}U_{s_{n}}$.
Otherwise, there is some $n\geq k$ such that $a_{\alpha,i}\in U_{s_{n}}$.
So there is some $q\in G$ with $U_{s_{n}}\in q$, and by moving to
a common extension we can assume $q\leq p$.
In fact, $q<p$, since $U_{s_{n}}\notin p$ by the choice of $k$
and $n$. But $a_{\alpha,i}\in p$, $q<p$ and $U_{s_{n}}\in q\smin p$
imply that $a_{\alpha,i}\notin U_{s_{n}}$, by contradiction.
\end{proof}
So indeed, $\p_{i}$ adds open-dense sets which destroy the genericity
of every $a_{\alpha,i}$. This will show that $T_{i}$ is destroyed,
once we show that $T_{i}$ still means the same thing after forcing
with $\p_{i}$.
\begin{lem}
$\p_{i}$ is $\sigma$-centered.
\end{lem}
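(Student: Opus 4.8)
The plan is to display $\p_i$ as a countable union of centered pieces, organised according to the finite collection of basic open sets occurring in a condition. The key observation is that incompatibility in $\p_i$ has a one-sided character: in extending a condition one is always free to throw in arbitrarily many of the reals $a_{\alpha,i}$, and the only real constraint — that a newly added basic open set avoid the previously listed reals — never concerns the reals themselves. Hence two conditions with the same list of basic open sets should always have a common extension, and in fact the set of all conditions with a given open part will be directed.

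To make this precise, for a condition $p\in\p_i$ write $p^{\circ}=\{U_s\in p\}$ for its (finite) set of basic open sets. For each finite $F\con\omega^{<\omega}$ put
\[
\p_i^{F}=\bigl\{p\in\p_i\mid p^{\circ}=\{U_s\mid s\in F\}\bigr\}.
\]
Every condition has a finite open part, so $\p_i=\bigcup\{\p_i^{F}\mid F\con\omega^{<\omega}\text{ finite}\}$, and since $\omega^{<\omega}$ is countable there are only countably many such $F$; thus it suffices to check that each $\p_i^{F}$ is centered.

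So fix $F$ and conditions $p_1\til p_k\in\p_i^{F}$, say $p_j=\{U_s\mid s\in F\}\cup\{a_{\alpha,i}\mid\alpha\in A_j\}$ with $A_j\con\omega_1$ finite, and let $q=\{U_s\mid s\in F\}\cup\{a_{\alpha,i}\mid\alpha\in A_1\cup\dots\cup A_k\}$. Then $q$ is again a finite set consisting of basic open sets and reals of the form $a_{\alpha,i}$, hence a condition, with $q^{\circ}=\{U_s\mid s\in F\}$, so $q\in\p_i^{F}$. For each $j$ we have $p_j\con q$, and $q\smin p_j$ contains no basic open set (as $q^{\circ}=p_j^{\circ}$), so the clause ``whenever $a_{\alpha,i}\in p_j$ and $U_s\in q\smin p_j$ then $a_{\alpha,i}\notin U_s$'' holds vacuously; hence $q\leq p_j$. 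Thus $\p_i^{F}$ is directed, in particular centered. If one wants the convention of Remark \ref{rem:For-convenience} that $\one_{\p_i}$ (i.e.\ the empty condition) belong to each piece, one simply adjoins it to every $\p_i^{F}$, which does no harm since it lies above every condition. There is no genuine obstacle in this argument — the only point one must keep clearly in mind is the asymmetry of the ordering, namely that enlarging the list of reals in a condition is always a legitimate extension, so that compatibility within $\p_i^{F}$ is automatic.
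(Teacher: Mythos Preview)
Your proof is correct and follows essentially the same approach as the paper: partition $\p_i$ according to the finite set of basic open sets appearing in a condition, and observe that the union of finitely many conditions sharing the same open part is a common extension. Your write-up is slightly more detailed in verifying $q\leq p_j$, but the argument is identical in substance.
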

\begin{proof}
For every $t_{1}\til t_{n}\in\omega^{<\omega}$, let $\p\left(t_{1}\til t_{n}\right)$
be the set of all conditions in $\p_{i}$ containing exactly the basic-open
sets $U_{t_{1}}\til U_{t_{n}}$. Note that there are only $\omega$
such sets $\left\{ t_{1}\til t_{n}\right\} $, and that clearly $\p_{i}=\bigcup\{\p\left(t_{1}\til t_{n}\right)\mid t_{1}\til t_{n}\in\omega^{<\omega}\}$.
Now notice that every $\p\left(t_{1}\til t_{n}\right)$ is centered,
since if $p_{1}\til p_{l}\in\p\left(t_{1}\til t_{n}\right)$, then
$p_{1}\cup\dots\cup p_{l}$ is still a legitimate condition in $\p_{i}$,
and it extends each $p_{j}$ since the only limitation on extension
concerned the basic-open sets, which we did not change. 
\end{proof}
\begin{cor}
Let $W'$ be some appropriate extension of~ $W$. Let $G\con\p_{i}$
be generic over $W'$. Then $W'\left[G\right]\vDash\neg T_{i}$
\end{cor}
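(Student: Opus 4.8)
The plan is to combine the $\sigma$-centeredness of $\p_i$ with the two density lemmas just proved. First I would note that, since $\p_i$ is $\sigma$-centered and $W'$ is an appropriate extension of $W$, the further extension $W'\left[G\right]$ is again appropriate; consequently the sentence $T_i$ retains its intended meaning there, so that the reals $a_{\alpha,i}$, the sequences $\left\langle a_{\beta,j}\mid\beta<\omega_1,j\ne i\right\rangle$, and the inner models $L\left[x,\left\langle a_{\beta,j}\mid\beta<\omega_1,j\ne i\right\rangle\right]$ are all computed correctly in $W'\left[G\right]$. This is exactly where the hypothesis ``appropriate'' enters, and it is essential: in a non-appropriate extension $T_i$ could express something entirely unrelated.

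Next I would extract from $G$ the infinite sequence $\left\langle U_{s_n}\mid n<\omega\right\rangle$ of basic-open sets contributed by the conditions of $G$, as described before lemma \ref{lem:dense}, and let $x\in\omega^\omega$ be a real coding the sequence $\left\langle s_n\mid n<\omega\right\rangle$. The key point is that $x\in W'\left[G\right]$ and that, for each $k<\omega$, the set $D_k:=\bigcup_{n\geq k}U_{s_n}$ is first-order definable from $x$ and $k$; hence the whole family $\left\{D_k\mid k<\omega\right\}$ lies in $L\left[x\right]$, and a fortiori in $L\left[x,\left\langle a_{\beta,j}\mid\beta<\omega_1,j\ne i\right\rangle\right]$. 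By lemma \ref{lem:dense}, every $D_k$ is open-dense in $\omega^\omega$.

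Finally I would use this $x$ as the witness for $\neg T_i$. Given any $\alpha<\omega_1$, the lemma above provides a $k$ with $a_{\alpha,i}\notin D_k$; since $D_k$ is an open-dense subset of $\omega^\omega$ belonging to $L\left[x,\left\langle a_{\beta,j}\mid\beta<\omega_1,j\ne i\right\rangle\right]$ and not containing $a_{\alpha,i}$, the real $a_{\alpha,i}$ fails to be Cohen generic over that model. As this holds for \emph{every} $\alpha<\omega_1$, and in particular for unboundedly many, the real $x$ shows that $T_i$ is false in $W'\left[G\right]$, i.e.\ $W'\left[G\right]\vDash\neg T_i$. The only genuinely delicate step is the first one --- checking that forcing with $\p_i$ preserves the meaning of $T_i$ --- which reduces to $\p_i$ being $\sigma$-centered together with the set-up of $W$; everything afterwards is a routine assembly of the two density lemmas, the one subtlety being to observe that the generic open-dense sets $D_k$ are coded by a single real and hence genuinely lie inside the inner model $L\left[x,\dots\right]$.
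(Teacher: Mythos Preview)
Your proposal is correct and follows essentially the same approach as the paper's proof: use $\sigma$-centeredness to ensure $W'[G]$ is appropriate, extract a single real $x$ coding the generic family of basic-open sets, observe that the tail-unions $D_k$ then lie in $L[x]$, and apply the two density lemmas to conclude that every $a_{\alpha,i}$ misses some $D_k$. The only cosmetic difference is that the paper defines $x$ as the subset $\{m\mid \exists p\in G\,(U_{t_m}\in p)\}\subseteq\omega$ relative to a fixed $L$-definable enumeration $\langle t_m\rangle$ of $\omega^{<\omega}$, whereas you take $x\in\omega^\omega$ to code the enumerated sequence $\langle s_n\rangle$ directly; either choice makes the $D_k$'s definable in $L[x]$, so the arguments are interchangeable.
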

\begin{proof}
By $\sigma$-centeredness, after forcing with $\p_{i}$
the meaning of all the definitions in $T_{i}$ remain the same. So
we will find a real $x\in W'\left[G\right]$ such that all the $a_{\alpha,i}$'s
are already not generic over $L\left[x\right]$, so surely $T_{i}$
fails. 
Recall the enumeration $ \langle \frakt_n \mid n<\omega\rangle$ we fixed earlier, and define $x=\left\{ m\mid\exists p\in G\left(U_{\frakt_{m}}\in p\right)\right\} $.
So, if as before $\left\langle U_{s_{n}}\mid n<\omega\right\rangle $
is the sequence of basic-open sets given by $G$, we assumed it is a subsequence of  $ \langle \frakt_n \mid n<\omega\rangle$, so in $L\left[x\right]$ we can already define each
union $\bigcup_{n\geq k}U_{s_{n}}$. Hence, as we have shown above,
each $a_{\alpha,i}$ is not in some dense-open set of $\omega^{\omega}$
in $L\left[x\right]$, and therefore not generic over $L\left[x\right]$
as required.
\end{proof}
Our next task will be to show that forcing with some $\p_{j}$ does
not affect the truth of $T_{i}$ for any $i\ne j$.
\begin{lem}
Let $W'$ be an appropriate extension of~ $W$, such that $W'\vDash T_{i}$.
Let $G\con\p_{j}$ be generic over $W'$ for $j\ne i$. Then $W'\left[G\right]\vDash T_{i}$. 
\end{lem}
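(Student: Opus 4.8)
The plan is to exploit that $\p_{j}$ is definable purely from the reals $\langle a_{\beta,j}\mid\beta<\omega_{1}\rangle$, so that, over the models relevant to $T_{i}$, forcing with $\p_{j}$ is ``independent'' of the Cohen genericity of the $a_{\alpha,i}$'s; together with the hypothesis $W'\vDash T_{i}$ this gives $W'[G]\vDash T_{i}$. Write $B=\langle a_{\beta,j'}\mid\beta<\omega_{1},\,j'\ne i\rangle$ and $C_{j}=\langle a_{\beta,j}\mid\beta<\omega_{1}\rangle$. Note first that $B$, $\omega_{1}$, and (by $\sigma$-centeredness of $\p_{j}$) the meanings of the definitions occurring in $T_{i}$ are unchanged in passing from $W'$ to $W'[G]$; so it suffices to fix an arbitrary real $x\in W'[G]$ and produce some $\gamma<\omega_{1}$ such that $a_{\alpha,i}$ is Cohen generic over $L[x,B]$ for all $\alpha\ge\gamma$.

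I would begin with two observations. (i) The conditions of $\p_{j}$ are finite sets of basic open sets (coded by the fixed, $L$-definable enumeration of $\omega^{<\omega}$) together with reals from the sequence $C_{j}$, and the ordering refers only to the absolute relation ``$s\init a_{\beta,j}$''; hence $\p_{j}\in L[C_{j}]\con L[B]$. (ii) Since $\p_{j}$ is c.c.c.\ (being $\sigma$-centered), the real $x$ has a nice $\p_{j}$-name $\dot{x}\in W'$, and such a name involves only countably many conditions of $\p_{j}$ and hence only countably many of the reals $a_{\beta,j}$; consequently $\dot{x}$, together with the countably many relevant values $a_{\beta,j}$, is coded by a single real $y\in W'$. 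Then $x=\dot{x}^{G}$ is definable from $y$ and $G$, so $x\in L[y,G]$ and therefore $L[x,B]\con L[y,B][G]$.

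Next I would apply $W'\vDash T_{i}$ with the real $y$ as parameter: there is $\gamma<\omega_{1}$ such that $a_{\alpha,i}$ is Cohen generic over $L[y,B]$ for every $\alpha\ge\gamma$. Fix such an $\alpha$ and set $N=L[y,B]$; both $\mathbb{C}=\omega^{<\omega}$ and $\p_{j}$ lie in $N$ (the latter by (i)). The $\mathbb{C}$-generic filter determined by $a_{\alpha,i}$ is generic over $N$, while $G$ is $\p_{j}$-generic over $N[a_{\alpha,i}]=L[y,B,a_{\alpha,i}]$ (as $N[a_{\alpha,i}]\con W'$ and $G$ is generic over $W'$). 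By mutual genericity --- i.e.\ the product lemma applied to $\mathbb{C}$ and $\p_{j}$ inside $N$ --- $a_{\alpha,i}$ remains Cohen generic over $N[G]=L[y,B][G]$, and hence over the submodel $L[x,B]$. Since $\alpha\ge\gamma$ was arbitrary, this establishes $W'[G]\vDash T_{i}$.

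The step I expect to be the crux is (ii): it is essential that the \emph{name} $\dot{x}$, and not merely the real $x$, be absorbed into a ground-model (relative to $W'$) real $y$, for this is what forces $x$ into $L[y,G]$ and allows $L[x,B]$ to be compared with $L[y,B][G]$; this is exactly where the c.c.c.\ of $\p_{j}$ enters. The companion fact (i), that $\p_{j}\in L[C_{j}]\con L[B]$, is what permits both $\mathbb{C}$ and $\p_{j}$ to sit inside the single model $N$ so that the product lemma applies, and it formalizes the heuristic that $\p_{j}$ ``acts only on'' the reals $a_{\cdot,j}$ --- which $L[x,B]$ already contains because $j\ne i$ --- so it cannot interfere with the Cohen genericity of the $a_{\alpha,i}$'s.
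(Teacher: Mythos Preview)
Your argument is correct, and the two key inputs (i) and (ii) are exactly the same as in the paper. The difference lies in how you pass from ``$a_{\alpha,i}$ is Cohen generic over $N=L[y,B]$'' to ``$a_{\alpha,i}$ is Cohen generic over $L[x,B]$''. The paper argues contrapositively and by hand: given a dense-open $U\in L[x,B]\subseteq N[G]$ missing $a_{\alpha,i}$, it picks a name $\dot U\in N$ and a condition $p\in G$ forcing the relevant facts, and explicitly builds
\[
\bar U=\{r\in\omega^{\omega}\mid\exists p'\le p\ (p'\Vdash\check r\in\dot U)\}\in N,
\]
checking that $\bar U$ is dense-open and misses $a_{\alpha,i}$. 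You instead observe that $\mathbb C,\p_{j}\in N$, that $a_{\alpha,i}$ is $\mathbb C$-generic over $N$, and that $G$ is $\p_{j}$-generic over $N[a_{\alpha,i}]\subseteq W'$, and invoke the product lemma to conclude $a_{\alpha,i}$ is $\mathbb C$-generic over $N[G]\supseteq L[x,B]$. Your route is shorter and more conceptual; the paper's route is more self-contained and makes the mechanism visible (indeed the construction of $\bar U$ is essentially the relevant instance of the product lemma unrolled).

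One minor imprecision: the claim ``$x\in L[y,G]$'' is a bit optimistic, since reconstructing the $\p_{j}$-conditions in $\dot x$ requires the reals $a_{\beta,j}$ (or at least matching the coded reals back to elements of $\p_{j}$). What you actually need, and what follows immediately from your (i) and (ii), is $\dot x\in L[y,B]=N$ and hence $x\in N[G]$, giving $L[x,B]\subseteq N[G]$; this is exactly what you use downstream, so the argument is unaffected.
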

\begin{proof}
Assume otherwise. Then there is some $x\in W'\left[G\right]$ such
that unboudedly many $a_{\alpha,i}$ are not generic over $L\left[x,\left\langle a_{\beta,k}\mid\beta<\omega_{1},k\ne i\right\rangle \right]$.
Let $\dot{x}\in W'$ be a $\p_{j}$-name for $x$. Since $x$ is a
real, we can assume that $\dot{x}$ is a name containing only elements
of the form $\left\langle q,\check{n}\right\rangle $ for $n\in\omega$
and $q\in\p_{j}$. Furthermore, since $\p_{j}$ is c.c.c, we can assume
that there are only countably many elements of the form $\left\langle q,\check{n}\right\rangle $
for each $n$.
 So $\dot{x}$ is a countable collection of elements of the form $\left\langle q,\check{n}\right\rangle $. We wish to ``code'' $\dot{x}$ by some real $y\in W'$. We do this
in the usual way: Let $\gamma$ be the supremum of all $\alpha<\omega_{1}$ such that
$a_{\alpha,j}\in q$ for some $\left\langle q,\check{n}\right\rangle \in\dot{x}$.
Since $\dot{x}$ is countable and each such $q$ contains only finitely
many $a_{\alpha,j}$'s, $\gamma<\omega_{1}$.  Each $q\in\p_{j}$ is of the form $\left\{ U_{s_{1}}\til U_{s_{n}},a_{\alpha_{1},j}\til a_{\alpha_{l},j}\right\} $,
so it is determined by a finite subset of $\omega^{<\omega}$ and
a finite subset of ordinals no larger than $\gamma$. This information can be coded by a finite sequence of natural numbers $z_{q}$. Each pair $\left\langle z_{q},\check{n}\right\rangle $ can be coded
by a natural number.  So the entire $\dot{x}$ can be coded by a set of natural numbers
$y$. All these codings are done in $W'$ so $y\in W'$.

Now assume that $W'[G]\vDash$``$ a_{\alpha,i}$ is not generic over $M':=L\left[x,\left\langle a_{\beta,k}\mid\beta<\omega_{1},k\ne i\right\rangle \right]$''.
We'll show that already $W'\vDash$``$ a_{\alpha,i}$ is not generic over $M:=L\left[y,\left\langle a_{\beta,k}\mid\beta<\omega_{1},k\ne i\right\rangle \right]$''. 

$M\con W'$, and since the definition of $\p_{j}$ requires only the
reals $\left\langle a_{\beta,j}\mid\beta<\omega_{1}\right\rangle $,
$\p_{j}\in M$. In addition, since we can decode $y$ in this model,
we have $\dot{x}\in M$. Since $\p_{j}\in M\con W'$, $G$ is generic
also over $M$. The fact that $a_{\alpha,i}$ is not generic over
$M'$ means that there is a dense open set $U\in M'$ such that  in $W'[G]$, $a_{\alpha,i}\notin U$. From the perspective of $M$ and $M'$, $a_{\alpha,i}$ is merely a Cohen generic over  $L\left[\left\langle a_{\beta,k}\mid\beta<\omega_{1},k\ne i\right\rangle \right]$.
Since $\dot{x}\in M$, $M'\con M\left[G\right]$, so $U\in M\left[G\right]\con W'[G]$.
So there is some $p\in G$ and some $\p_{j}$-name $\dot{U}\in M$
such that $p\Vdash$ ``$\dot{U}$ is an open-dense subset of $\omega^{\omega}$, which does not contain reals which are Cohen generics over $L\left[\left\langle a_{\beta,k}\mid\beta<\omega_{1},k\ne i\right\rangle \right]$''. Define
\[
\bar{U}=\left\{ r\in\omega^{\omega}\mid\exists p'\leq p\left(p'\Vdash\check{r}\in\dot{U}\right)\right\} .
\]
So $\bar{U}\in M$. We claim first that $\bar{U}$ is open-dense. 

Open: let $r\in\bar{U}$, witnessed by $p'\leq p$ s.t. $p'\Vdash\check{r}\in\dot{U}$.
Since $p'$ also forces that $\dot{U}$ is open, there is some $p''\leq p'$
and some $s\in\omega^{<\omega}$ such that $p''\Vdash``\check{r}\in U_{s}\con\dot{U}"$,
that is $p''\Vdash\check{s}\init\check{r}\land\left(\check{s}\init\dot{r}'\to\dot{r}'\in\dot{U}\right)$.
The initial segment relation does not change, so $s\init r$. If $r'\in U_{s}$,
then in particular we'll get $p''\Vdash\check{r}'\in\dot{U}$, so
by definition $r'\in\bar{U}$. So $U_{s}\con\bar{U}$.
So $\bar{U}$ is open. 

Dense: Let $s\in\omega^{<\omega}$. Since $p$ forces that $\dot{U}$
is open-dense, there is some $p'\leq p$ and some $t\tini s$ such
that $p'\Vdash\left(\check{t}\init\dot{r}\to\dot{r}\in\dot{U}\right)$.
So let some $t\init r\in\omega^{\omega}$, then in
particular we get $p'\Vdash\check{r}\in\dot{U}$ , so $r\in\bar{U}$
and $r\tini s$ as required.

Second, we claim that in $W'$, $a_{\alpha,i}\notin\bar{U}$. Otherwise, there
is some $p'\leq p$ such that $p'\Vdash\check{a}_{\alpha,i}\in\dot{U}$.
But $p$ forced that no Cohen generic over $L\left[\left\langle a_{\beta,k}\mid\beta<\omega_{1},k\ne i\right\rangle \right]$ is in $\dot{U}$,
a contradiction.

So, we have found an open-dense set $\bar{U}\in M$ such that $a_{\alpha,i}\notin\bar{U}$,
so $a_{\alpha,i}$ is not generic over $M$. This was for every $a_{\alpha,i}$
not generic over $L\left[x,\left\langle a_{\beta,k}\mid\beta<\omega_{1},k\ne i\right\rangle \right]$,
and we assumed there are unboundedly many of these. So there are unboundedly
many $a_{\alpha,i}$'s which are not generic over $M=L\left[y,\left\langle a_{\beta,k}\mid\beta<\omega_{1},k\ne i\right\rangle \right]$,
where $y\in W'$. But this contradicts the assumption that $W'\vDash T_{i}$.
So, we indeed get that also $W'\left[G\right]\vDash T_{i}$.
\end{proof}
To conclude, packing up what we have done in this section, we obtain
the following:
\begin{prop} \label{prop:buttons}
$\left\{ \neg T_{i}\mid i<\omega\right\} $ is a family of independent
buttons over $W$ for $\sigma$-centered forcing.
\end{prop}
\begin{rem}
In fact, we can replace ``$\sigma$-centered'' with any every reflexive
and transitive class of forcing notions, containing all the $\p_{i}$'s,
such that every extension of $W$ with a forcing from the class yields
an appropriate extension.

Note that if it were the case that in no extension of $W$ by $\sigma$-centered
forcing all these buttons are pushed, we could have finished the proof
of our main theorem using corollary \ref{cor: S4.2 contains 2}. However,
by lemma \ref{s.c.properties}(\ref{lem:Productivity}), $\prod_{i<\omega}\p_{i}$ is $\sigma$-centered,
and it pushes all the buttons, so we will have to use theorem \ref{main-labeling}.
\end{rem}

\subsection{\label{subsec:n-switches}The $n$-switches}

\begin{prop} \label{prop:n-switch}
Let $M$ be a model of $\mathrm{ZFC}$ such that $M\vDash2^{\aleph_{0}}=\aleph_{1}\land 2^{\aleph_{1}}=\aleph_{2}$ and every subset of $\omega_{2}$ is ordinal-definable using a definition which is absolute to $\sigma$-centered forcing extensions. Then for every $n>1$ there is an $n$-switch for $\sigma$-centered forcing over $M$.
\end{prop}
\begin{proof}
Let $\left\langle f_{\alpha}\mid\alpha<\omega_{1}\right\rangle \in M$
be a definable enumeration of all the functions $f:\omega\to\omega$
in $M$ 
 and define $y_{\alpha}=\mathcal{S}\left(f_{\alpha}\right)$ as in section \ref{sec:Almost-disjoint-forcing}.
Let $\left\langle A_{\xi}\mid\xi<\omega_{2}\right\rangle \in M$ be
some fixed definable enumeration of all the subsets 
of $\omega_{1}$ in $M$. Let $C\left(x,\xi\right)$
be the statement 
\begin{gather}
x\con\omega\text{ and }(\alpha\in A_{\xi}\leftrightarrow x\cap y_{\alpha}\,\mathrm{is\,finite})
\end{gather}
referred to as ``$x$ is a real coding $A_{\xi}$''. By the discussion
at the end of section \ref{sec:Almost-disjoint-forcing}, for every
$\xi$ there is a $\sigma$-centered forcing notion $\q_{\xi}\in M$
such that $\q_{\xi}\Vdash\exists xC\left(x,\xi\right)$. We would
like to define a ratchet by using the
statements ``$\alpha=\sup\left\{ \xi<\omega_{2}\mid\exists xC\left(x,\xi\right)\right\} $''.
By defining so, we can indeed always increase the value of alleged ratchet
by forcing with $\q_{\alpha}$, but in a certain extension,
forcing with $\q_{\alpha}$ might also add a real coding $A_{\xi}$
some $\xi>\alpha$. To fix that, we will define an unbounded set $\mathcal{E}$
such that adding a code for $A_{\alpha}$ for some $\alpha\in\mathcal{E}$
doesn't add a code for any larger $A_{\xi}$.

We work now within $M$, and fix some $\sigma$-centered poset $Q\in M$.
Let $\alpha<\omega_{2}$. We define by induction $\left\{ \alpha_{\zeta}\mid\zeta<\omega_{1}\right\} $.
Set $\alpha_{0}=\alpha$. If $\alpha_{\zeta}<\omega_{2}$ is defined
for $\zeta<\omega_{1}$, let 
\[
\alpha_{\zeta+1}=\sup\Big\{ \beta<\omega_{2}\mid Q\times\prod\limits _{\xi\leq\alpha_{\zeta}}\q_{\xi}\nVdash\neg\exists xC\left(x,\check{\beta}\right)\Big\} +1.
\]
The above set is not empty since $\q_{\alpha_{\zeta}}\Vdash\exists xC\left(x,\check{\alpha}_{\zeta}\right)$.
In particular, $\alpha_{\zeta}<\alpha_{\zeta+1}$. 

\begin{claim}
$\alpha_{\zeta+1}<\omega_{2}$.
\end{claim}
\begin{proof}
Let $P=Q\times\prod_{\xi\leq\alpha_{\zeta}}\q_{\xi}$. Since $\alpha_{\zeta}<\omega_{2}=\left(2^{\aleph_{0}}\right)^{+}$,
this is $\sigma$-centered by lemma \ref{s.c.properties}(\ref{lem:Productivity}) so by lemma \ref{s.c.properties} and our assumptions on $M$, $P\Vdash2^{\aleph_{1}}=\aleph_{2}>2^{\aleph_{0}}$.
In particular, $P\Vdash``\sup\left\{ \beta\mid\exists xC\left(x,\beta\right)\right\} <\omega_{2}"$,
since there cannot be $\aleph_{2}$ reals each coding a different
subset of $\omega_{1}$. Note that by the c.c.c of $P$ there can only be $\aleph_{0}$
many possible values for $\sup\left\{ \beta\mid\exists xC\left(x,\beta\right)\right\} $.
Hence 
 there is some $\beta_{P}<\omega_{2}$
bounding all these possible values, so it is forced by $P$
that $\sup\left\{ \beta\mid\exists xC\left(x,\beta\right)\right\} \leq\beta_{P}$.
Now, if for some $\gamma$, $P\nVdash\neg\exists xC\left(x,\check{\gamma}\right)$,
then there is $p\in P$ such that $p\Vdash\exists xC\left(x,\check{\gamma}\right)$,
but also $p\Vdash``\sup\left\{ \beta\mid\exists xC\left(x,\beta\right)\right\} \leq\beta_{P}"$ so 
$\gamma\leq\beta_{P}$.
So by the definition, $\alpha_{\zeta+1}\leq\beta_{P}+1<\omega_{2}$. 
\end{proof}
For $\zeta<\omega_{1}$ limit, set $\alpha_{\zeta}=\sup\left\{ \alpha_{\xi}\mid\xi<\zeta\right\} $
($\omega_{2}$ is regular so also in this case $\alpha_{\zeta}<\omega_{2}$),
and finally let $\alpha^{*}=\sup\left\{ \alpha_{\zeta}\mid\zeta<\omega_{1}\right\} $.
Again 
$\alpha^{*}<\omega_{2}$. 

\begin{claim}
\label{claim:sup}Let $G$ be generic for $\q=Q\times\prod\limits _{\xi<\alpha^{*}}\q_{\xi}$
such that $M\left[G\right]\vDash\exists xC\left(x,\beta\right)$.
Then $\beta<\alpha^{*}$. 
\end{claim}
\begin{proof}
Let $x\in M\left[G\right]$ such that $M\left[G\right]\vDash C\left(x,\beta\right)$.
So there is a $\q$-name $\tau$ and some $p_{*}\in G$ which forces
$C\left(\tau,\check{\beta}\right)$. For every $n$, let $C_{n}\con\q$
be a maximal antichain below $p_{*}$ of conditions deciding the statement
$\check{n}\in\tau$. By the c.c.c each $C_{n}$ is countable, so also
$C=\bigcup_{n\in\omega}C_{n}$ is countable. Every element of $\q$
is of the form $\langle q,\left(p_{\gamma}\right)_{\gamma<\alpha^{*}}\rangle $
where only for finitely many $\gamma$'s $p_{\gamma}\ne\one_{\q_{\gamma}}$.
So for each $p\in C$, denote this finite set of ordinals by $F_{p}$,
and let $\gamma^{*}=\sup\bigcup_{p\in C}F_{p}$. Each $F_{p}$ is
a set of ordinals less than $\alpha^{*}$, so $\gamma^{*}\leq\alpha^{*}$.
But since $C$ is countable and each $F_{p}$ is finite, $\gamma^{*}$
has at most countable cofinality, while $\alpha^{*}$ is the limit
of an increasing $\omega_{1}$ sequence $\left\langle \alpha_{\zeta}\mid\zeta<\omega_{1}\right\rangle $,
so $\gamma^{*}<\alpha^{*}$, and furthermore, there is some $\zeta<\omega_{1}$
such that $\gamma^{*}\leq\alpha_{\zeta}$. Let $\bar{\q}=Q\times\prod_{\xi\leq\alpha_{\zeta}}\q_{\xi}$.
For every $p\in\q$, if $p=\langle q,\left(p_{\gamma}\right)_{\gamma<\alpha^{*}}\rangle$,
let $\bar{p}=\langle q,\left(p_{\gamma}\right)_{\gamma\leq\alpha_{\zeta}}\rangle $,
and let $\bar{G}=\left\{ \bar{p}\mid p\in G\right\} $.
$\bar{G}$ is $\bar{\q}$-generic over $M$. We claim that
$x\in M\left[\bar{G}\right]$ and $M\left[\bar{G}\right]\vDash C\left(x,\beta\right)$.
Define the $\bar{\q}$-name 
\[
\bar{\sigma}=\bigcup_{n\in\omega}\big(\left\{ \bar{p}\mid p\in C_{n},p\Vdash\check{n}\in\tau\right\} \times\left\{ \check{n}\right\}\big) .
\]

\begin{rem*}
By the choice of $\alpha_{\zeta}\geq\gamma^{*}$, for every $p\in C_{n}$,
if it is of the form $\langle q,\left(p_{\gamma}\right)_{\gamma<\alpha^{*}}\rangle $,
then $p_{\gamma}=\one$ for every $\gamma>\alpha_{\zeta}$.
\end{rem*}
If $n\in x$ then there is some $p\in G$, $p\leq p_{*}$ that forces
$\check{n}\in\tau$. By the maximality of $C_{n}$, it intersects
$G$, which is a filter, so a condition in the intersection must also
force $\check{n}\in\tau$ (and not $\check{n}\notin\tau$). So we
can choose such $p\in C_{n}\cap G$, and by definition $\left\langle \bar{p},\check{n}\right\rangle \in\bar{\sigma}$.
In addition, $\bar{p}\in\bar{G}$, so $n\in\bar{\sigma}_{\bar{G}}$. 

If $n\in\bar{\sigma}_{\bar{G}}$ there is some $p\in C_{n}$, $p\Vdash\check{n}\in\tau$
such that $\bar{p}\in\bar{G}$. So, there is some $r\in G$ such that
$\bar{r}=\bar{p}$. Note that by the remark $r$ and $p$ are equal
in every coordinate where $p$ is not trivial, so $r\leq p$. Therefore
also $r\Vdash\check{n}\in\tau$, and $r\in G$ so $n\in x$.

So, we get that $\bar{\sigma}_{\bar{G}}=x$, so $x\in M\left[\bar{G}\right]$.
$M\left[\bar{G}\right]\con M\left[G\right]$, and in $M\left[G\right]$
we have $C\left(x,\beta\right)$, i.e. 
\[
A_{\beta}=\left\{ \alpha\mid x\cap y_{\alpha}\,\mathrm{is\,finite}\right\} ,
\]
so we can already have this equation in $M\left[\bar{G}\right]$ (since
the $y_{\alpha}$'s don't change), so $M\left[\bar{G}\right]\vDash C\left(x,\beta\right)$.
Therefore, we have that $Q\times\prod _{\xi\leq\alpha_{\zeta}}\q_{\xi}\nVdash\neg\exists xC\left(x,\beta\right)$,
so by the definition of $\alpha_{\zeta+1}$, $\beta\leq\alpha_{\zeta+1}<\alpha^{*}$,
as required.
\end{proof}
So we have defined an operation $\alpha\mapsto\alpha^{*}$ for every
$\alpha<\omega_{2}$ (note that this operation was relative to $Q$).
Since for every $\alpha<\omega_{2}$ we have $\alpha<\alpha^{*}<\omega_{2}$,
the set $\left\{ \alpha^{*}\mid\alpha<\omega_{2}\right\} $ is unbounded
in $\omega_{2}$, so the set $\mathcal{C}_{Q}$ consisting of all
limit points of this set is a club.

By corollary \ref{cor:few forcings}, in $M$ there are at most $(2^{2^{\aleph_{0}}})^{M}$
$\sigma$-centered forcing notions up to equivalence, or to be exact,
at most $(2^{2^{\aleph_{0}}})^{M}$ separative $\sigma$-centered forcing
notions. By our assumptions on $M$, this cardinal is $\aleph_{2}$. Note
that $\left\{ \prod_{\alpha<\xi}\q_{\alpha}\mid\xi\in\mathcal{C}_{Q}\right\} $ where $Q$ is the trivial forcing 
are non-equivalent $\sigma$-centered posets, so we get exactly $\aleph_{2}$
posets. Since every separative $\sigma$-centered poset has size at
most $\aleph_{1}$, each can be coded as binary relations on $\omega_{1}$,
so by our assumption we can have a defineable enumeration
$\left\langle Q_{\zeta}\mid\zeta<\omega_{2}\right\rangle$ of all the separative $\sigma$-centered
forcing notions in $M$. Define $\mathcal{C}$ as be the diagonal
intersection of $\left\langle \mathcal{C}_{Q_{\zeta}}\mid\zeta<\omega_{2}\right\rangle $:
\[
\mathcal{C}:=\underset{{\scriptstyle \zeta<\omega_{2}}}{\triangle}\mathcal{C}_{Q_{\zeta}}=\Big\{ \alpha<\omega_{2}\mid\alpha\in\bigcap_{\zeta<\alpha}\mathcal{C}_{Q_{\zeta}}\Big\} 
\]
which is also a club in $\omega_{2}$. 
Now we let 
\[
\mathcal{E}=\mathcal{C}\cap\left\{ \alpha<\omega_{2}\mid\mathrm{cf}\left(\alpha\right)=\omega_{1}\right\} .
\]
$\mathcal{E}$ is unbounded: it is known that the set $\left\{ \alpha<\omega_{2}\mid\mathrm{cf}\left(\alpha\right)=\omega_{1}\right\} $
is stationary in $\omega_{2}$ 
so it intersects the club $\left\{\alpha\in\mathcal{C}\mid\alpha>\gamma\right\} $
for each $\gamma<\omega_{2}$. Let $\left\langle e_{\alpha}\mid\alpha<\omega_{2}\right\rangle$
be a (definable) ascending enumeration of $\mathcal{E}$. 

\begin{rem*}
We would have preferred to work with $\mathcal{C}$ rather than $\mathcal{E}$.
The problem is that the $\phantom{}^{*}$ operation may not be continuous at limits of countable cofinality -- to prove continuity, we would like to imitate the proof of claim \ref{claim:sup}, but it requires that the limit is of uncountable cofinality. If the length of the product is of countable cofinality, there might be a real that is not introduced in any bounded product.
\end{rem*}
Now we can define the following statements 
 \begin{equation*}\label{def:ratchet}
	r\left(\alpha\right)=``\alpha=\min\left\{ \beta<\omega_{2}\mid\neg\exists xC\left(x,e_{\beta}\right)\right\}"
\end{equation*}
These are indeed statements (with ordinal parameters perhaps) invoking the definition of $\left\langle e_{\alpha}\mid\alpha<\omega_{2}\right\rangle$,
which retains its intended meaning in every
extension of $M$ by $\sigma$-centered forcing.
 Now given $n>1$, define for every $j<n$ 
 \begin{equation*}
 \Phi_{j}=``r\left(\omega\cdot\alpha+k\right)\to\left(k\bmod n=j\right)".
 \end{equation*}
 
We claim that $\left\{ \Phi_{j}\mid j<n\right\} $ is an $n$-switch
for $\sigma$-centered forcing over $W$.


Let $Q\in M$ be some $\sigma$-centered poset, and $G\con Q$ generic
over $M$. By lemma \ref{s.c.properties}(\ref{lem:continuum}) $M\left[G\right]$ satisfies
$2^{\aleph_{0}}=\aleph_{1}$ and $2^{\aleph_{1}}=\aleph_{2}$ , so in particular the set $\left\{ \beta<\omega_{2}\mid\exists x C\left(x,e_{\beta}\right)\right\} ^{M\left[G\right]}$
is bounded, since there are only $\omega_{1}$ reals, and therefore
there cannot be unboundedly many subsets of $\omega_{1}$ coded by
them. So there is some unique $\gamma<\omega_{2}$ such that $M\left[G\right]\vDash r\left(\gamma\right)$.
There are some unique $j,k<n$ such that $\gamma=\omega\cdot\alpha+k$
and $k\bmod n=j$, so $M\left[G\right]\vDash\Phi_{j}$. Hence every
$\sigma$-centered extension of $M$ satisfies exactly one $\Phi_{j}$.
Now we need to show that for every $j'\ne j$ there is some $\sigma$-centered
extension of $M\left[G\right]$ satisfying $\Phi_{j'}$. Recall the
club $\mathcal{C}_{Q}$ from the above construction. $Q=Q_{\xi}$
for some $\xi<\omega_{2}$. By the unboundedness of $\mathcal{E}$,
we can find some $\gamma'$ such that $e_{\gamma'}>\xi$ and $\gamma'=\omega\cdot\alpha+j'$
for some $\alpha$. We want to show that we have a generic extension
of $M\left[G\right]$ satisfying $r\left(\gamma'\right)$.

Let $H\con\prod_{\zeta<e_{\gamma'}}\q_{\zeta}$ be generic over $M\left[G\right]$.
By the product lemma, $M\left[G\times H\right]=M\left[G\right]\left[H\right]$
and $G\times H$ is $Q\times\prod_{\zeta<e_{\gamma'}}\q_{\zeta}$
generic over $M$. So for every $\beta<e_{\gamma'}$, $\prod_{\zeta<e_{\gamma'}}\q_{\zeta}\Vdash\exists xC\left(x,\beta\right)$,
so $M\left[G\right]\left[H\right]\vDash\exists xC\left(x,\beta\right)$.
On the other hand, recall that $e_{\gamma'}$ is in the diagonal intersection of the
clubs $\mathcal{C}_{Q_{\zeta}}$, so by definition, and since $e_{\gamma'}>\xi$,
$e_{\gamma'}\in\bigcap_{\zeta<e_{\gamma'}}\mathcal{C}_{Q_{\zeta}}\con\mathcal{C}_{Q_{\xi}}$.
So, by the definition of $\mathcal{C}_{Q_{\xi}}$, $e_{\gamma'}$
is either of the form $\delta^{*}$ for some $\delta$ and the $\phantom{}^{*}$
operation corresponding to $Q_{\xi}$, or a limit point of such points.
In the first case, we can just apply claim \ref{claim:sup}. In the
second case, since $e_{\gamma'}\in\mathcal{E}$ is of uncountable
cofinality, we can repeat the proof of claim \ref{claim:sup} with
a sequence $\langle \delta_{\zeta}^{*}\mid\zeta<\omega_{1} \rangle $
that witnesses $e_{\gamma'}\in\mathcal{C}_{Q_{\xi}}$, and get that
the statement in claim \ref{claim:sup} is true as well. That is,
in both cases, we get that if $M\left[G\times H\right]\vDash\exists xC\left(x,\beta\right)$ then $\beta<e_{\gamma'}$. So $e_{\gamma'}$ is the first $\beta$
such that $M\left[G\times H\right]\vDash\neg\exists xC\left(x,\beta\right)$.
Since the enumeration of $\mathcal{E}$ is increasing, we get that
$\gamma'=\min\left\{ \beta<\omega_{2}\mid\neg\exists xC\left(x,e_{\beta}\right)\right\} $.
So $M\left[G\times H\right]\vDash r\left(\gamma'\right)$, and since
$\gamma'=\omega\cdot\alpha+j'$, $M\left[G\times H\right]\vDash\Phi_{j'}$
as required.
\end{proof}

The forcing notions used in this $n$-switch add real numbers in a
rather uncontrollable way, so it is indeed likely that they might
add some real which destroys the genericity of the $a_{\alpha,i}$'s,
therefore it is unlikely that this $n$-switch is independent of the
buttons $\neg T_{i}$. However, by using both constructions presented
in this section, we can overcome the drawbacks each of them has, and obtain our main result:

\begin{thm} \label{thm:main}
If ~$\mathrm{ZFC}$ is consistent, then the $\mathrm{ZFC}$-provable
principles of $\sigma$-centered forcing are exactly $\mathsf{S4.2}$.
\end{thm}
\begin{proof}
If $\mathrm{ZFC}$ is consistent then we can obtain the model $W$ and by propositions \ref{prop:buttons} and \ref{prop:n-switch} we obtain the buttons and $n$-switches required for using theorerm \ref{main-labeling}. So the $\mathrm{ZFC}$-provable principles of $\sigma$-centered forcing are contained in $\mathsf{S4.2}$, and by theorem \ref{thm:contained in} we get equality.
\end{proof}

\section{Generalizations and open questions}
\subsection{Forcing over $L$}\label{overL}
In  most of the calculations of the modal logic of a certain set-theoretic construction, the upper bound was obtained using control statement over the constructible universe $L$ (e.g. all the calculations in \cite{StC}). A notable exception is the upper bound for the modal logic of \emph{grounds} obtained in \cite{MUD} using Reitz's model, which is a class-forcing extension of $L$.
Similarly, for our construction we also first had to build a special model of $\mathrm{ZFC}$ -- a set-forciong extension of $L$, over which we could obtain the desired control statements. Whether we could have avoided this and work over $L$ itself remains an open question, which leads to the following more exact one:
\begin{que}\label{overL?}
	What is the modal logic of $\sigma$-centered forcing over $L$?
\end{que}
This question relates to a second line of inquiry introduced in \cite{MLF} -- the calculation of the valid principles of forcing over a specific model. It the case of all forcing notions, it has been shown in \cite{MLF} that these principles always contain $\mathsf{S4.2}$ and are contained in $\mathsf{S5}$, where both bounds can be realized. Models having other validities have been recently announced by Block and Hamkins (see discussion in \cite[pg. 32]{Piribauer}). It has also been shown (see \cite{Hamkins-Linnebo, Piribauer}) that any model has a ground whose valid principles of forcing are $\mathsf{S4.2}$. However, in the case of $\sigma$-centered forcing we have only limited results: $\mathsf{S4.2}$ is still a lower bound over any model, as this class is reflexive, transitive and persistent. As for an upper bound -- note that any model satisfying the assumptions of proposition \ref{prop:n-switch}, $L$ in particular, has $n$-switches for $\sigma$-centered forcing for every $n$. Such $n$-switches can be used, as in \cite[theorem 10]{StC}, to show that the valid principles of $\sigma$-centered forcing for such a model are contained in the modal theory $\mathsf{S5}$. So we do have:
\begin{prop}
	The modal logic of $\sigma$-centered forcing over $L$ (or any model satisfying  the assumptions of proposition \ref{prop:n-switch}) is between $\mathsf{S4.2}$ and $\mathsf{S5}$.
\end{prop}
However in the case of $L$ we do not expect to be able to raise the upper bound to $\mathsf{S5}$, as not even axiom $\mathrm{.3}$: 
\[
\left(\pos\fii\land\pos\psi\right)\to\pos\left[\left(\pos\fii\land\psi\right)\lor \left(\fii\land\pos\psi\right)\right]
\] 
 which corresponds to the linearity of the forcing class (see \cite{StC}), holds over $L$: by a result of B{\l}aszczyk and Shelah \cite{blaszczyk-shelah}, in $L$ there is a $\sigma$-centered forcing notion which does not add a Cohen real. So let $\p$ be the $L$-least such forcing notion, consider the statements:
\begin{align*}
\fii'&= \text{``} \text{There is a Cohen real over } L \text{''}\\
\psi'&= \text{``} \text{There is a } \p \text{-generic filter over }L \text{''}
\end{align*}
and set $\fii =\fii'\land\neg\psi'$, $\psi=\psi'\land\neg\fii'$. So clearly $L\vDash\pos\fii\land\pos\psi$ but $\left(\pos\fii\land\psi\right)\lor \left(\fii\land\pos\psi\right)$ is not forceable since a generic for one of these forcings will stay generic in further extensions. Hence question \ref{overL?} is still open, as well as the more general question:
\begin{que}
	What modal theories can be realized as the valid principles of $\sigma$-centered forcing over some model of $\mathrm{ZFC}$?
\end{que}

\subsection{Related forcing classes}\label{related classes}
Throughout this work, we have focused on $\sigma$-centered forcing
notions. However, by examining the proofs, one can see that we have
not used the full strength of $\sigma$-centeredness. To obtain the
lower bound, we used the reflexivity, transitivity and persistence
of $\sigma$-centered posets. And to obtain the upper bound, we defined labelings using two main ingredients -- the posets constructed section \ref{subsec:Buttons}, giving us the independent buttons, and the $n$-switch of proposition \ref{prop:n-switch}.
To work with the buttons, we also required that all extensions of
$W$ will be appropriate. Assuming this, once we had an $n$-switch,
we did not use it's specific construction in defining the labeling.
So in fact we have the following: 
\begin{thm}
\label{thm:Gener1}Let $W$ the model constructed in section \ref{subsec:ground-model}
and $\Gamma$ a class of forcing notions with the following properties:

\begin{enumerate}
\item $\Gamma$ is reflexive, transitive and persistent;
\item Every extension of~ $W$ by a $\Gamma$-forcing is appropriate;
\item All posets constructed in section \ref{subsec:Buttons} are in $\Gamma$;
\item There is an $n$-switch for $\Gamma$-forcing over $W$ for any $n$.
\end{enumerate}
Then $\mathsf{MLF}\left(\Gamma\right)=\mathsf{S4.2}$.
\end{thm}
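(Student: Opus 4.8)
The plan is to retrace the argument of the main theorem, checking that every appeal to $\sigma$-centeredness is covered by hypotheses (1)--(4). For the lower bound, observe that by (1) the class $\Gamma$ is reflexive and transitive, and, being transitive and persistent, directed (take $\mathbb{R}=\p\times\q$, as in the remark following the definition of these properties); hence Theorem \ref{thm:lower bounds} yields $\mathsf{MLF}(\Gamma)\supseteq\mathsf{S4.2}$.

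For the upper bound it suffices, by Corollary \ref{cor:S4.2 upper bound}, to produce a $\Gamma$-labeling of an arbitrary finite pre-Boolean-algebra $\langle F,\leq\rangle$ over some model of $\mathrm{ZFC}$, and I would use $W$. The predicates $T_i$ of section \ref{subsec:Buttons} retain their intended meaning in every $\Gamma$-extension of $W$ by (2), the posets $\p_i$ destroying them lie in $\Gamma$ by (3), and $\Gamma$ is reflexive and transitive by (1); as the remark following the concluding theorem of section \ref{subsec:Buttons} records, these are exactly the facts the arguments of that section use, so $\{\neg T_i\mid i<\omega\}$ is a family of independent (pure) buttons for $\Gamma$-forcing over $W$. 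Consequently the ``almost $n$-switch'' $\{\Theta_j\mid j<n\}$ built from the $T_i$ at the start of section \ref{subsec:n-switches} also keeps its properties, the only forcings it invokes being the $\p_l\in\Gamma$; and by (4) I would fix, for the given $n$, an honest $n$-switch $\{\Phi_j\mid j<n\}$ for $\Gamma$-forcing over $W$.

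With these three families in hand, one runs the construction of the labeling theorem of section \ref{sec:Labeling-frames} verbatim, with ``$\sigma$-centered'' replaced by ``$\Gamma$'': label the cluster over $C\con B$ by $\Psi_C=\bigwedge_{b\in C}\neg T_b\wedge\bigwedge_{b\notin C}T_b$, separate the worlds of clusters below the top one by the $\Theta_j$, and label the top cluster by the disjunction of $\Psi_B\wedge\Theta_j$ with $(\sup\{n\mid\neg T_n\}=\omega)\wedge\Phi_j$. Verifying the three conditions of a labeling then uses only: that $\Psi_C$ and the statement $\sup\{n\mid\neg T_n\}=\omega$ are persistent (immediate from the persistence of each button $\neg T_i$, so independent of $\Gamma$); that finitely many of the $\p_b,\p_k$ and their finite iterations stay in $\Gamma$ (from (3), transitivity and persistence --- e.g.\ $\p_b\times\p_{b'}=\p_b*\check{\p}_{b'}\in\Gamma$ because persistence places $\p_{b'}$ in $\Gamma^{V^{\p_b}}$ and the $\p_i$ are absolutely defined); that $\{\Phi_j\}$ is an $n$-switch (from (4)); and that every $\Gamma$-extension of $W$ is appropriate (from (2), so the $T_i$ behave correctly throughout). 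This gives the desired $\Gamma$-labeling, so $\mathsf{MLF}(\Gamma)\con\mathsf{S4.2}$, and together with the lower bound $\mathsf{MLF}(\Gamma)=\mathsf{S4.2}$.

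The one step that genuinely needs care --- and which I expect to be the main obstacle --- is that, unlike in the $\sigma$-centered case, there is no analogue of Lemma \ref{lem:Productivity} guaranteeing that the countable product $\prod_{i<\omega}\p_i$ lies in $\Gamma$, and it was precisely that product which forced the second $n$-switch into the picture. The resolution is built into the shape of the labeling: $\{\Phi_j\}$ is used only inside the top cluster, where $\sup\{n\mid\neg T_n\}=\omega$ already holds and can never be undone, so it is never combined with, nor required to be independent of, the buttons, and no closure of $\Gamma$ under infinite products of the $\p_i$ is needed. Hypothesis (4) therefore supplies, as a black box, exactly what the top cluster requires, while the rest of the verification only ever manipulates finitely many of the $\p_i$ at a time.
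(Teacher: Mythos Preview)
Your proposal is correct and follows essentially the same approach as the paper, which simply records that the main theorem's proof used only hypotheses (1)--(4) and then states the generalization; you have carried out that verification more explicitly than the paper does. One small remark on framing: in your final paragraph you present the absence of an analogue of Lemma~\ref{lem:Productivity} as ``the main obstacle'', but the labeling verification never invokes an infinite product of the $\p_i$ at all --- the second $n$-switch is needed not because of that particular product but because \emph{some} $\Gamma$-forcing might push unboundedly many of the $\neg T_i$ (whether or not the product lies in $\Gamma$), and your resolution via hypothesis (4) handles this correctly in any case.
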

Now let's see what was needed to obtain the $n$-switch of proposition
\ref{prop:n-switch}. We relied heavily on the c.c.c of all posets
in question; we used all posets coding subsets of $\omega_{1}$, as
well as products of at most $\aleph_{1}$ of them; we relied on the
fact that $\sigma$-centered posets cannot enlarge $2^{\aleph_{0}}$
or $2^{\aleph_{1}}$; we used the fact that there were (in $W$) only
$\aleph_{2}$ $\sigma$-centered posets up to equivalence, and that
they were all already in the smaller model $Z$. So, this construction
can be carried with any class of forcing notions satisfying these
requirements. To conclude:
\begin{thm}
\label{thm:Gener2}Let $\Gamma$ be a class of forcing notions with
the following properties:

\begin{enumerate}
\item $\Gamma$ is reflexive, transitive and persistent;
\item Every extension of $W$ by a $\Gamma$-forcing is appropriate;
\item All posets constructed in section \ref{subsec:Buttons} are in $\Gamma$;
\item[(4.1)] Each poset in $\Gamma$ has the c.c.c, and does not enlarge $2^{\aleph_{0}}$
or $2^{\aleph_{1}}$;
\item[(4.2)] $\left|\Gamma\right|\leq2^{2^{\aleph_{0}}}$ (where the size is measured
up to equivalence of forcing);
\item[(4.3)] $\Gamma^{W}\con Z$;
\item[(4.4)] All posets which are used to code subsets of $\omega_{1}$, and products
of at most $\aleph_{1}$ of them, are in $\Gamma$.
\end{enumerate}
Then $\mathsf{MLF}\left(\Gamma\right)=\mathsf{S4.2}$.
\end{thm}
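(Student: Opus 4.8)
The plan is to derive Theorem \ref{thm:Gener2} from Theorem \ref{thm:Gener1}. Hypotheses (1), (2) and (3) of the two theorems coincide, so the entire task is to deduce condition (4) of Theorem \ref{thm:Gener1} --- that for every $n>1$ there is an $n$-switch for $\Gamma$-forcing over $W$ --- from the remaining hypotheses (4.1)--(4.4). To do this I would reexamine the proof of Proposition \ref{prop:n-switch} line by line and observe that the only facts about $\sigma$-centered forcing that it invokes are precisely those isolated in (4.1)--(4.4); one then reruns that proof with these substitutions in place of the various appeals to $\sigma$-centeredness.

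In detail, the construction of Proposition \ref{prop:n-switch} builds, for fixed $n$, the family $\left\{\Phi_j\mid j<n\right\}$ from a ratchet $r(\alpha)$ indexed by an unbounded $\mathcal{E}\con\omega_2$ of points of cofinality $\omega_1$. Its ingredients are the almost-disjoint coding reals $y_\alpha=\s(f_\alpha)$ and the coding posets $\q_\xi$ for the subsets $A_\xi$ of $\omega_1$; the $\q_\xi$ lie in $\Gamma$ by (4.4), and together with the enumerations involved they are definable in every appropriate extension of $W$ by (4.3) (so they lie in $Z$) and the definability of $Z$ from ordinals. The auxiliary map $\alpha\mapsto\alpha^*$ is defined through the products $Q\times\prod_{\xi\le\alpha_\zeta}\q_\xi$ and $Q\times\prod_{\xi<\alpha^*}\q_\xi$; since $\alpha_\zeta,\alpha^*<\omega_2$, each such product has at most $\aleph_1$ factors among the $\q_\xi$, so it lies in $\Gamma$ by (4.4) together with closure of $\Gamma$ under finite products (which follows from transitivity and persistence). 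That these posets have the c.c.c.\ and leave $2^{\aleph_0}=\aleph_1$ and $2^{\aleph_1}=\aleph_2$ unchanged is exactly (4.1) combined with the arithmetic of $W$; this is all that the proof of Claim \ref{claim:sup} (countability of the antichains $C_n$ and of their union) and of the bound $\alpha_{\zeta+1}<\omega_2$ actually use.

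Next, (4.2) and (4.3) give at most $2^{2^{\aleph_0}}=\aleph_2$ posets in $\Gamma^W$ up to forcing equivalence, all lying in $Z$ --- in fact exactly $\aleph_2$, as the products $\prod_{\xi<\zeta}\q_\xi$ witness --- so there is a definable enumeration $\langle Q_\zeta\mid\zeta<\omega_2\rangle\in Z$, over which the diagonal intersection defining the club $\mathcal{C}$, and then $\mathcal{E}$, is formed as before. The verification that $\left\{\Phi_j\mid j<n\right\}$ is an $n$-switch then goes through unchanged: for $\Gamma$-generic $G$ one has $W[G]\vDash 2^{\aleph_0}=\aleph_1$ by (4.1) and c.c.c., so the set of coded $A_\xi$ stays bounded in $\omega_2$ and exactly one $\Phi_j$ holds; raising the switch value to any $j'$ uses a product $\prod_{\zeta<e_{\gamma'}}\q_\zeta$ with $e_{\gamma'}<\omega_2$, hence at most $\aleph_1$ factors, so again in $\Gamma$ by (4.4); and the cofinality and regularity arguments --- in particular the reason one must work with $\mathcal{E}$ (points of cofinality $\omega_1$) rather than $\mathcal{C}$, namely possible discontinuity of $*$ at limits of countable cofinality --- are purely combinatorial and never mention the forcing class.

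The point calling for care is bookkeeping rather than a new idea: one must confirm that the deliberately loose hypothesis (4.4) genuinely covers every product that appears --- the bounded products $\prod_{\xi\le\alpha_\zeta}\q_\xi$, the long product $\prod_{\xi<\alpha^*}\q_\xi$, their products with an arbitrary $Q\in\Gamma$, and $\prod_{\zeta<e_{\gamma'}}\q_\zeta$ --- and that (4.1) is applied only to posets already known to lie in $\Gamma$ and is invoked exactly where c.c.c.\ or preservation of the continuum function is needed. Once condition (4) of Theorem \ref{thm:Gener1} has been secured this way, Theorem \ref{thm:Gener1} immediately yields $\mathsf{MLF}\left(\Gamma\right)=\mathsf{S4.2}$.
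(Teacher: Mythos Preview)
Your proposal is correct and follows exactly the approach the paper takes: the paper does not give a separate proof of Theorem~\ref{thm:Gener2} but rather, in the paragraph immediately preceding it, observes that conditions (4.1)--(4.4) are precisely the properties of $\sigma$-centered forcing used in the proof of Proposition~\ref{prop:n-switch}, so that the $n$-switch construction goes through verbatim and Theorem~\ref{thm:Gener1} applies. Your write-up is in fact considerably more detailed than the paper's own treatment, carefully tracing each hypothesis to its point of use.
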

\begin{rem*}
Conditions 3 and 4.4 will hold for any class containing
every $\sigma$-centered forcing. 
\end{rem*}
\begin{defn}
	A subset $C\con\p$ is called $n$\emph{-linked} if any $n$ elements
	of $C$ are compatible, i.e. have a common extension (perhaps not
	in $C$ itself). $2$-linked is also called simply \emph{linked}.
	A poset is called $\sigma$\emph{-$n$-linked }if it is the union
	of $\omega$ many $n$-linked subsets. Again, $\sigma$-linked means
	$\sigma$-2-linked.
\end{defn}
It is clear that we have the following implications:
\[
\mbox{\ensuremath{\sigma}-centered \,\ensuremath{\rightarrow\,\,}\ensuremath{\sigma}-\ensuremath{n}-linked for every \ensuremath{n}\,\,\ensuremath{\rightarrow...\rightarrow\,\,}\ensuremath{\sigma}-\ensuremath{n}-linked \,\ensuremath{\rightarrow\,\,}\ensuremath{\sigma}-linked}
\]
and it is known that the other directions do not hold (cf. \cite{weak-var}).
\begin{cor}
	Let $\Gamma$ be either the class of all $\sigma$-$n$-linked posets
	(for some fixed $n$), or the class of all posets which are\textup{
	}$\sigma$-$n$-linked for every $n$. Then $\mathsf{MLF}\left(\Gamma\right)=\mathsf{S4.2}$.
\end{cor}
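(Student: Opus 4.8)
The plan is to verify that each of the two classes $\Gamma$ in question satisfies all the hypotheses of Theorem \ref{thm:Gener2}, after which the conclusion is immediate. The key point is that both classes are sandwiched between the $\sigma$-centered posets and the $\sigma$-linked posets: every $\sigma$-centered poset is $\sigma$-$n$-linked for every $n$, and every poset that is $\sigma$-$n$-linked (for a fixed $n$, or for all $n$) is in particular $\sigma$-linked. This sandwiching is exactly what makes the hypotheses go through, since the buttons and the $n$-switch were built from $\sigma$-centered posets, while the structural lemmas we rely on need only $\sigma$-linkedness.

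First I would verify hypothesis (1). Reflexivity is clear, as the trivial forcing is $n$-linked for every $n$. Persistence holds because ``being a union of countably many $n$-linked sets'' is upward absolute: a witnessing $\omega$-sequence of subsets in $V$ still witnesses the property in any extension, since compatibility of a fixed finite tuple of conditions is absolute. Transitivity is obtained by rerunning the proof of Lemma \ref{lem:transitivity} with ``centered'' replaced by ``$n$-linked'' throughout: in verifying that each piece $A_{k,m}$ of the iteration is $n$-linked one only ever needs to amalgamate at most $n$ conditions at a time (first $n$ conditions in a piece $\p_k$ of $\p$, then the corresponding $n$ names inside $\dot\q_m$), so $n$-linkedness of the pieces suffices. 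The class of posets that are $\sigma$-$n$-linked for every $n$ is the intersection of these properties, so it too is reflexive, transitive and persistent. (This already yields $\mathsf{MLF}(\Gamma)\supseteq\mathsf{S4.2}$ via Theorem \ref{thm:lower bounds}, but it is also subsumed by Theorem \ref{thm:Gener2}.)

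Next I would dispose of the smallness hypotheses (4.1)--(4.3). Every $\sigma$-linked poset is c.c.c.\ (an antichain meets each linked piece in at most one point), and by Lemma \ref{lem:continuum} --- whose proof, as observed in the text, uses only $\sigma$-linkedness --- such a poset preserves $2^{\lambda}$ for every $\lambda\ge\aleph_0$, hence in particular does not enlarge $2^{\aleph_0}$ or $2^{\aleph_1}$; this is (4.1). By Lemma \ref{lem:small} (likewise using only $\sigma$-linkedness) the separative quotient of any $\Gamma$-poset has size at most $2^{\aleph_0}$, so up to forcing equivalence there are at most $2^{2^{\aleph_0}}$ posets in $\Gamma$, giving (4.2). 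For (4.3), recall $2^{\aleph_0}=\aleph_1$ in $W$; thus every separative $\Gamma$-poset in $W$ is coded by a subset of $\omega_1$, and since $W$ adds no new subsets of $\omega_1$ over $Z$ all such codes lie in $Z$, exactly as in the proof of Proposition \ref{prop:n-switch}, so $\Gamma^W\con Z$.

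The remaining hypotheses (2), (3) and (4.4) hold for the simplest possible reason. The posets $\p_i$ of Section \ref{subsec:Buttons} and the almost-disjoint coding posets $\q_\xi$ of Section \ref{sec:Almost-disjoint-forcing} are $\sigma$-centered, hence $\sigma$-$n$-linked for every $n$, hence members of $\Gamma$; and a finite-support product of at most $\aleph_1<(2^{\aleph_0})^+$ of the $\q_\xi$ is again $\sigma$-centered by Lemma \ref{lem:Productivity}, hence also in $\Gamma$, which is (4.4). For (2), a $\Gamma$-forcing extension of $W$ preserves the whole continuum function by Lemma \ref{lem:continuum}, so it leaves untouched the Easton coding of $\langle a_{\alpha,i}\rangle$ in the power function, whence the defining formula for these reals remains absolute and every such extension is appropriate. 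With every hypothesis of Theorem \ref{thm:Gener2} verified, we conclude $\mathsf{MLF}(\Gamma)=\mathsf{S4.2}$. The only step demanding genuine care is the ``mutatis mutandis'' adaptation of Lemmas \ref{lem:Productivity} and \ref{lem:transitivity} for a fixed $n$: one must check that throughout those proofs only $n$-fold amalgamation is ever invoked. I do not expect this to be a real obstacle, only careful bookkeeping.
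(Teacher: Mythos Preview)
Your proposal is correct and follows exactly the paper's approach: the corollary is deduced by verifying that each class $\Gamma$ satisfies the hypotheses of Theorem~\ref{thm:Gener2}, using the sandwiching $\sigma$-centered $\Rightarrow$ $\sigma$-$n$-linked $\Rightarrow$ $\sigma$-linked together with the observation that Lemmas~\ref{lem:continuum} and~\ref{lem:small} require only $\sigma$-linkedness, while Lemmas~\ref{lem:Productivity} and~\ref{lem:transitivity} go through \emph{mutatis mutandis} for $n$-linked pieces. Your write-up is in fact more explicit than the paper's, which simply asserts that ``they satisfy all the conditions of theorem~\ref{thm:Gener2}'' after noting these same points.
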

\begin{proof}
	
Lemmas \ref{s.c.properties} and \ref{lem:small} hold for these classes as well, so they are all transitive, preserve cardinals and the continuum function, and have size at most $2^{2^{\aleph_{0}}}$ (up to equivalence). These classes are also clearly reflexive and persistent, and they contain the class of all $\sigma$-centered forcings, so they satisfy all the conditions of theorem \ref{thm:Gener2}.
\end{proof}

Parallel to this hierarchy of properties, we can define the following
hierarchy (cf. \cite{weak-var}):
\begin{defn}

\begin{enumerate}
\item Given $n\in\omega$, $\p$ has property $\mathrm{K}_{n}$ if
every $A\in\left[\p\right]^{\aleph_{1}}$ contains an uncountable
$n$-linked subset. $\mathrm{K}_{2}$ is also called the \emph{Knaster
property}.
\item $\p$ has \emph{pre-caliber $\omega_{1}$} if every $A\in\left[\p\right]^{\aleph_{1}}$
contains an uncountable centered subset.
\end{enumerate}
\end{defn}
Note that pre-caliber $\omega_{1}$ implies property $\mathrm{K}_{n}$,
and $\mathrm{K}_{n}$ implies $\mathrm{K}_{m}$ for $m\leq n$. So
these form a hierarchy of properties. Furthermore, if $\p$ is $\sigma$-centered
then it has pre-caliber $\omega_{1}$, and if it is $\sigma$-$n$-linked
then it has property $\mathrm{K}_{n}$. So we get the following implications:
\[
	\xymatrix{\sigma\text{-centered}\ar[r]\ar[dr] 
	& \forall n\left(\sigma\text{-}n\text{-linked}\right)\ar@{.>}[r]
	& \sigma\text{-}n\text{-linked}\ar@{.>}[r]\ar[d]  
	& \sigma\text{-}2\text{-linked}\ar[d]
	\\
	&\text{pre-caliber}\,\omega_{1}\ar@{.>}[r]
	& \mathrm{K}_{n}\ar@{.>}[r]
	& \mathrm{K}_{2}\ar[r]
	& \mathrm{c.c.c}
}
\]

Let $\mathsf{P}$ be either pre-caliber $\omega_{1}$ or $\mathrm{K}_{n}$	for some $n$.
\begin{cor}
 Let $\Gamma_{\mathsf{P_{<\delta}}}$ the class of all $\mathsf{P}$-forcing
	notions of size $<\delta$, for some regular $\delta>2^{\aleph_0^{\aleph_0}} $. Then $\mathsf{MLF}\left(\Gamma_{\mathsf{P_{<\delta}}}\right)=\mathsf{S4.2}$.
\end{cor}
\begin{proof}
We verify the conditions of theorem \ref{thm:Gener1}. It is standard to verify that $\Gamma_{\mathsf{P_{<\delta}}}$ is reflexive, transitive and persistent (hence directed), so condition (1) holds.
Note that the coding of the reals $a_{\alpha,i}$
can be started as high as we want, so by limiting ourselves to forcing notions of a bounded size, we can do this coding somewhere high enough that will not be affected by these forcings, and therefore obtain condition (2) (note that by c.c.c cardinals are not changed).
Since $\delta>2^{\aleph_0^{\aleph_0}}$ these classes contains all $\sigma$-centered forcings, so condition (3) holds.

Since these forcings do not preserve the continuum,
we cannot obtain an $n$-switch for these classes using the same methods. However, note that for every $I$, the poset $\mathrm{Fn}\left(I,2\right)$
consisting of finite functions from $I$ to $2$, ordered by reverse
inclusion, has pre-caliber $\omega_{1}$ (cf. \cite[pg. 181]{KunenNew}), and forces $2^{\aleph_0}>|I|$, which by c.c.c is a button for $\mathsf{P}$-forcing. Therefore
the statements $\langle2^{\aleph_{0}}\geq\aleph_{\alpha}\mid\alpha<\delta\rangle$ form a strong ratchet, and we can use it to construct an $n$-switch
as in lemma \ref{lem:n-switch}, to obtain condition (4). So all the conditions of theorem
\ref{thm:Gener1} can be met, giving us the result.
\end{proof}

Note that bounding the size of the forcings was essential, since otherwise the classes contain all posets of the form $\mathrm{Fn}\left(I,2\right)$, so extensions of $W$ using such forcings may not be appropriate. We can however get some result on the full classes of $\mathsf{P}$-forcings:
\begin{thm} \label{thm:allP}
 Let $\Gamma_{\mathsf{P}} $ be the class of all $P$-forcings. Then $\mathsf{S4.2}\subseteq\mathsf{MLF}\left(\Gamma_{\mathsf{P}}\right)\subseteq\mathsf{S4.3}$. 
\end{thm}
\begin{proof}
	As we noted, these classes are reflexive, transitive and persistent, hence the left inclusion. Now, the statements $ r(\alpha)=2^{\aleph_{0}}\geq\aleph_{\alpha}$ for any $ \alpha \in \mathrm{Ord}$ such that $\mathrm{cf}(\alpha)>\omega$ form a long ratchet over $ L $ in the sense of \cite{StC}: these are indeed statements obtained by a single formula using parameter $ \alpha $;
	each of them is a pure button for every class of c.c.c forcings, since once $ 2^{\aleph_{0}}\geq\aleph_{\alpha} $ is true, it cannot be made false without collapsing cardinals; 
	clearly $ r(\alpha)\to r(\beta) $ for any $ \beta<\alpha $; 
	we can push $ r(\alpha) $ without pushing any further $ r(\beta) $ using the c.c.c poset $ \mathrm{Fn}(\aleph_\alpha\times\omega,2) $ of all finite functions from $\aleph_\alpha\times\omega$ to $\{0,1\}$, which forces $ 2^{\aleph_{0}}=\aleph_{\alpha} $ (whenever $ \mathrm{cf}(\alpha)>\omega $, see \cite[thm. IV.3.13]{KunenNew}. Note that the requirement there is $\aleph_\alpha^{\aleph_0}=\aleph_\alpha$, which is implied by $ \mathrm{cf}(\alpha)>\omega $ so will not change by c.c.c forcing);
	and clearly no forcing extension can satisfy every $ r(\alpha) $.
	So by \cite[theorem 12]{StC} (theorem \ref{thm:ratchet->S4.3}) we get the inclusion on the right. 
\end{proof}

To get an exact result would require a different method, so the following is open:
\begin{que}
Let $\mathsf{P}$ be either pre-caliber $\omega_{1}$
or $\mathrm{K}_{n}$ for some $n$ . What is the modal logic of all
$\mathsf{P}$-forcing notions?
\end{que}
Finally, the only property in the above diagram we did not discuss
yet is c.c.c.
\begin{que}
What is the modal logic of all c.c.c forcing notions?
\end{que}
This natural question was already raised in \cite{MLF}. The difficulty
in answering it is that the class of all c.c.c forcing notions is
\emph{not} directed, so it does not contain $\mathsf{S4.2}$.
It is reflexive and transitive, so Hamkins and L\"{o}we conjectured
that the answer is $\mathsf{S4}$. To prove this, one would probably
need to find a labeling for models based on trees, as the class of
all trees is a class of simple frames characterizing $\mathsf{S4}$.
It should be mentioned that
in \cite{Inamdar}, a labeling of models based frames which are ``spiked
pre-Boolean algebras'' (denoted $\mathsf{S4sBA}$, cf. \cite{Inamdar} for exact definition)
was provided, thus establishing an upper bound which is strictly between
$\mathsf{S4}$ and $\mathsf{S4.2}$. However it is not known whether
this modal theory is finitely axiomatizable, so it is not yet clear
whether this can be shown to be a lower bound as well by the current methods. So, this question remains open.

To conclude, we list the current knowledge concerning all the classes in the diagram.
\begin{thm} \
	\begin{enumerate}
	\item The modal logic of the following classes of forcing is exactly $\mathsf{S4.2}$:
	\begin{enumerate}
		\item $\sigma$-centered;
		\item $\sigma$-$n$-linked for all $n$;
		\item $\sigma$-$n$-linked for some $n$;
		\item Pre-caliber $\omega_1$ of bounded size;
		\item $\mathrm{K}_n$ (for some $n$) of bounded size.
	\end{enumerate}
	\item The modal logic of the following classes contains $\mathsf{S4.2}$ and is contained in $\mathsf{S4.3}$:
	
	\begin{enumerate}
		\item Pre-caliber $\omega_1$;
		\item $\mathrm{K}_n$ for some $n$.
	\end{enumerate}
	\item The modal logic of c.c.c forcing contains $\mathsf{S4}$ and is contained in $\mathsf{S4sBA}$.
	
\end{enumerate}
\end{thm}
\bibliographystyle{plain}
\bibliography{Thesis}

\begin{thebibliography}{10}

\bibitem{weak-var}
J.~H. Barnett.
\newblock Weak variants of {M}artin's axiom.
\newblock {\em Fundamenta Mathematicae}, 141(1):61--73, 1992.

\bibitem{BAT}
U.~Ben-Ari-Tishler~(now Ya'ar).
\newblock The modal logic of {$\sigma$}-centered forcing and related forcing
  classes.
\newblock MSc thesis, Hebrew University of Jerusalem, 2016.

\bibitem{ModalLogic}
P.~Blackburn, M.~de~Rijke, and Y.~Venema.
\newblock {\em Modal logic}, volume~53 of {\em Cambridge Tracts in Theoretical
  Computer Science}.
\newblock Cambridge University Press, Cambridge, 2001.

\bibitem{blaszczyk-shelah}
A.~B{\l}aszczyk and S.~Shelah.
\newblock Regular subalgebras of complete {B}oolean algebras.
\newblock {\em The Journal of Symbolic Logic}, 66(2):792--800, 2001.

\bibitem{StC}
J.~D. Hamkins, G.~Leibman, and B.~L{\"o}we.
\newblock Structural connections between a forcing class and its modal logic.
\newblock {\em Israel J. Math.}, 207(2):617--651, 2015.

\bibitem{Hamkins-Linnebo}
J.~D. Hamkins and {\O}.~Linnebo.
\newblock The modal logic of set-theoretic potentialism and the potentialist
  maximality principles.
\newblock {\em arXiv preprint arXiv:1708.01644}, 2017.

\bibitem{MLF}
J.~D. Hamkins and B.~L{\"o}we.
\newblock The modal logic of forcing.
\newblock {\em Trans. Amer. Math. Soc.}, 360(4):1793--1817, 2008.

\bibitem{MUD}
J.~D. Hamkins and B.~L{\"o}we.
\newblock Moving up and down in the generic multiverse.
\newblock In {\em Logic and its applications}, volume 7750 of {\em Lecture
  Notes in Comput. Sci.}, pages 139--147. Springer, Heidelberg, 2013.

\bibitem{Inamdar}
T.~C. Inamdar.
\newblock On the modal logics of some set-theoretic constructions.
\newblock MSc thesis, Universiteit van Amsterdam, 2013.

\bibitem{JS}
R.~B. Jensen and R.~M. Solovay.
\newblock Some applications of almost disjoint sets.
\newblock In {\em Mathematical {L}ogic and {F}oundations of {S}et {T}heory
  ({P}roc. {I}nternat. {C}olloq., {J}erusalem, 1968)}, pages 84--104.
  North-Holland, Amsterdam, 1970.

\bibitem{Kunen}
K.~Kunen.
\newblock {\em Set theory}, volume 102 of {\em Studies in Logic and the
  Foundations of Mathematics}.
\newblock North-Holland Publishing Co., Amsterdam-New York, 1980.

\bibitem{KunenNew}
K.~Kunen.
\newblock {\em Set theory}, volume~34 of {\em Studies in Logic}.
\newblock College Publications, London, 2011.

\bibitem{Piribauer}
Jakob Piribauer.
\newblock The modal logic of generic multiverses.
\newblock Master’s Thesis. Universiteit van Amsterdam, 2017.

\end{thebibliography}

\end{document}